%% file: paper.tex
\documentclass{article}

 \usepackage[preprint]{neurips_2026}

\usepackage[utf8]{inputenc} 
\usepackage[T1]{fontenc}    
\usepackage[pagebackref=true]{hyperref}       
\renewcommand*{\backref}[1]{}
\renewcommand*{\backrefalt}[4]{%
    \ifcase #1 \relax
    \or
        (Cited on page #2.)%
    \else
        (Cited on pages #2.)%
    \fi
}
\usepackage{url}            
\usepackage{booktabs}       
\usepackage{amsfonts}       
\usepackage{nicefrac}       
\usepackage{microtype}      
\usepackage{xcolor}         
\hypersetup{ %
  colorlinks=true,
  linkcolor=linkcolor,
  citecolor=linkcolor,
  filecolor=linkcolor,
  urlcolor=linkcolor,
}
\definecolor{linkcolor}{rgb}{0,0.08,0.45} 
\newcommand{\linkcolor}{\color{linkcolor}}

\title{
    \rennalamvrtitle: Improved Time Complexity \\
    for Parallel Stochastic Optimization \\
    via Momentum-Based Variance Reduction
    }

\author{%
  Zhirayr~Tovmasyan \\
  KAUST \\
  \texttt{zhirayr.tovmasyan@kaust.edu.sa} \\
  \And
  Artavazd~Maranjyan \\
  KAUST \\
  \texttt{artavazd.maranjyan@kaust.edu.sa} \\
  \And
  Peter~Richt\'{a}rik \\
  KAUST \\
  \texttt{peter.richtarik@kaust.edu.sa} \\
}

\input{macros.tex}

\begin{document}

\maketitle

\begin{abstract}
    %
    %
    %
    %
    Large-scale machine learning models are trained on clusters of machines that exhibit heterogeneous performance due to hardware variability, network delays, and system-level instabilities.
    In such environments, time complexity rather than iteration complexity becomes the relevant performance metric for optimization algorithms.
    Recent work by \citet{tyurin2023optimal} established the first time complexity analysis for parallel first-order stochastic optimization, proposing \rennala as a time-optimal method for smooth nonconvex optimization.
    However, \rennala is fundamentally a modification of \sgd, and variance reduction techniques are known to improve the iteration complexity of \sgd.
    In this work, we investigate whether variance reduction can also improve time complexity in heterogeneous systems.
    We show that, under a mean-squared smoothness assumption, variance reduction can improve time complexity in relevant parameter regimes.
    To this end, we propose \rennalamvr, a variance-reduced extension of \rennala based on momentum-based variance reduction, and analyze its oracle and time complexity.
    We establish lower bounds for time complexity under these assumptions.
    On a stochastic quadratic benchmark, experiments with the exact method support the theory, while neural-network experiments with a practical inexact variant show similar empirical gains over \rennala.
  \end{abstract}
  \begin{table*}
    \caption{
      Comparison of time and oracle complexities of our method with the state-of-the-art parallel first-order method \rennala, together with our derived lower bounds.
      We consider a system of $n$ workers, where worker $i$ requires $\tau_i$ seconds to compute a stochastic gradient; without loss of generality, we assume
      $\tau_1 \le \tau_2 \le \cdots \le \tau_n$.
      Both time and oracle complexities are reported for achieving an $\varepsilon$--stationary point in the nonconvex setting.
      The oracle complexity counts the total number of stochastic gradient evaluations used in the optimizer updates over the entire training process.
      The stochastic gradients computed by the workers are assumed to be unbiased with bounded variance $\sigma^2$ (\Cref{ass:unbiased_bounded_variance}), i.e.,
      $\mathbb{E}_{\xi\sim\cD}[\left\|\nabla f(x;\xi) - \nabla f(x)\right\|^2] \le \sigma^2
      \quad \text{for all } x \in \mathbb{R}^d$.
      We denote by
      $\Delta \eqdef f(x^0) - f^*$
      the initial suboptimality gap, where $f^* = \inf_x f(x)$ (\Cref{ass:lower_bound}).
      Here, $L$ is the smoothness constant of $f$, i.e., 
      $\|\nabla f(x) - \nabla f(y) \|^2 \le L^2 \| x-y \|^2$,
      and $\bar L$ denotes the mean-squared smoothness constant (\Cref{ass:ms_smoothness}), defined by
      $\mathbb{E}_{\xi\sim\cD}\|\nabla f(x;\xi) - \nabla f(y;\xi)\|^2 \le \bar L^2 \| x-y \|^2$.
      All stated complexities hide universal constant factors.
      Our method outperforms \rennala in both oracle and time complexity for sufficiently small $\varepsilon$ and when $\bar L = \mathrm{O}(L)$; see \Cref{sec:time_complexity}.
      Although our method does not always match the lower bound in terms of time complexity, it matches the lower bound in oracle complexity and, in certain regimes, can also achieve optimal time complexity.
      A detailed discussion is provided in \Cref{sec:lower_bound}.
    }
    \label{table}
    \centering 
    \begin{threeparttable}  
      \resizebox{\textwidth}{!}{
        \begin{tabular}{ccc}
          \toprule
            \bf Algorithm
            & \makecell{\bf Time Complexity}
            & \makecell{\bf Oracle \\ \bf Complexity \textsuperscript{{\linkcolor (\dag)}}} \\
          \midrule
            \makecell{\rennala \\ \citep{tyurin2023optimal}}
            & ${\orange \frac{L\Delta}{\varepsilon} }
               \min\limits_{m\in[n]}\Big(\sum\limits_{i=1}^{m}\frac{1}{\tau_i}\Big)^{-1}
              \Big({\orange \frac{\sigma^2}{\varepsilon} } + m \Big)$
            & ${\orange \frac{L\Delta\sigma^2}{\varepsilon^2} }$ \\
          \midrule
            \makecell{\rennalamvr \textbf{(new)} \\ (\Cref{thm:time_complexity})}
            & $
            \Big({\orange \frac{\bar L\Delta}{\varepsilon}+\frac{\sigma}{\sqrt{\varepsilon}} }\Big)
            { \min\limits_{m\in[n]}\left(\sum\limits_{i=1}^{m}\frac{1}{\tau_i}\right)^{-1}
            \Big({\orange \frac{\sigma}{\sqrt{\varepsilon}} } + m \Big) }
            +
            { \min\limits_{m\in[n]}\left(\sum\limits_{i=1}^{m}\frac{1}{\tau_i}\right)^{-1}
            \Big({\orange \frac{\sigma^2}{\varepsilon} } + m \Big) }
            $ 
            & $\orange \frac{\bar L\Delta\sigma}{\varepsilon^{\nicefrac{3}{2}}} + \frac{\sigma^2}{\varepsilon}$ \\
          \midrule
          \midrule
            \makecell{Lower Bound \textbf{(new)} \\ (\Cref{thm:lower_bound})}
            & $\Big({\orange \frac{\bar L \Delta}{\sigma \sqrt{\varepsilon}} + 1 }\Big) 
            { \min\limits_{m\in[n]} 
            \Big(\sum\limits_{i=1}^{m}\frac{1}{\tau_i}\Big)^{-1}
            \Big({\orange \frac{\sigma^2}{\varepsilon} } + m \Big) }$
            & $\orange \frac{\bar L\Delta\sigma}{\varepsilon^{\nicefrac{3}{2}}} + \frac{\sigma^2}{\varepsilon}$ \textsuperscript{{\linkcolor (\ddag)}} \\
          \bottomrule
        \end{tabular}%
      }
      \resizebox{\textwidth}{!}{
      \begin{minipage}{\textwidth}
        \begin{tablenotes}[para,flushleft]
          \footnotesize
          \item[{\linkcolor (\dag)}]
            \textbf{Oracle complexity} counts the total number of stochastic gradient evaluations used in the optimizer updates.
            Define $T(B) = \min_{m\in[n]}\Big(\sum_{i=1}^{m}\frac{1}{\tau_i}\Big)^{-1} (B + m)$
            By \Cref{lem:time_to_collect_B} $2T(B)$ upper bounds the time required to obtain $B$ stochastic gradients asynchronously.
            Therefore, if a method has time complexity $KT(B)$, then its oracle complexity is obtained by multiplying $K$ by the number of stochastic gradient evaluations used per iteration.
            In particular, for \rennala this gives $KB$, while for \rennalamvr it gives $B_0 + 2KB$, since each iteration uses $B$ gradient pairs, i.e. $2B$ stochastic gradient evaluations, and the initialization costs $B_0$.
          \\
          \item[{\linkcolor (\ddag)}]
            This oracle complexity matches the known lower bound on oracle complexity established by \citet{arjevani2022lower}.
        \end{tablenotes}
      \end{minipage}
      }
    \end{threeparttable}
  \end{table*}
  \section{Introduction}
  Modern machine learning models have grown to a scale that makes training on a single machine impractical.
  As a result, training is performed in distributed environments spanning multiple machines with heterogeneous computation speeds, communication delays, and system variability.
  In such settings, the efficiency of optimization algorithms is determined not only by the number of iterations required for convergence, but also by their time complexity, which is critically affected by system-level heterogeneity.
  
  Traditionally, optimization methods for training machine learning models have been analyzed through the lens of iteration complexity, which measures how many algorithmic steps are needed to reach a target accuracy.
  While this metric has been instrumental for understanding algorithmic efficiency in idealized settings, it becomes insufficient in distributed and heterogeneous environments.
  In practice, two methods with similar iteration complexity can exhibit vastly different runtimes due to idle time, synchronization overhead, and other system-level bottlenecks.
  This mismatch has motivated a shift toward time complexity as a more faithful measure of algorithmic performance in real-world distributed systems.
  
  Recent work by \citet{tyurin2023optimal} derived the first lower bounds on time complexity for parallel first-order methods with smooth nonconvex losses.
  They matched this lower bound with a method called \rennala, which is a modification of classical stochastic gradient descent (\sgd) \citep{robbins1951stochastic}.
  This line of work highlights a fundamental principle: achieving optimal performance in distributed systems often requires redesigning classical algorithms to account for system heterogeneity.
  
  Beyond \sgd, \emph{variance reduction} techniques have been shown to substantially improve optimization efficiency in the nonconvex regime under iteration complexity analysis.
  Methods such as \algname{STORM} \citep{STORM} (which uses momentum variance reduction (\mvr)) or \algname{SNVRG} \citep{zhou2020stochastic} reduce gradient noise and achieve stronger convergence guarantees than vanilla \sgd under additional assumptions.
  However, despite their strong theoretical and empirical advantages, the time complexity of variance-reduced methods in heterogeneous distributed systems remains unexplored.
  
  This raises a natural question: if variance-reduced methods outperform \sgd in terms of iteration complexity, can they also achieve better time complexity, and how should they be modified to do so?
  
  In this work, we initiate a systematic study of time complexity for distributed variance-reduced optimization.
  We propose \rennalamvr, a variance-reduced extension of \rennala, and show that it achieves better time complexity under the stronger mean-squared smoothness assumption (\Cref{ass:ms_smoothness}).
  \subsection{Contributions}
  Our contributions are summarized as follows:
  \begin{itemize}
    \item We introduce \rennalamvr (\Cref{algo}), a variance-reduced distributed optimization method for heterogeneous systems under the mean-squared smoothness assumption.
    \item We derive upper bounds on its iteration and time complexity (\Cref{sec:theory}), clarifying when variance reduction can yield time complexity gains over \rennala.
    \item We establish a new lower bound on achievable time complexity in this regime (\Cref{sec:lower_bound}).
    \item We extend the time complexity analysis to the arbitrarily varying worker speeds setting (\Cref{sec:arbitrary_time}).
    \item We provide empirical results (\Cref{sec:experiments}) validating our theoretical findings and demonstrating practical performance improvements.
  \end{itemize}
  \subsection{Related Work}
  Time complexity analysis for parallel stochastic optimization has recently emerged as a principled alternative to iteration complexity, especially in heterogeneous environments where the time per iteration depends on the workers' speeds.
  Early modern treatments of time complexity for asynchronous methods---which avoid global synchronization at the cost of using stale gradients---include works by \citet{mishchenko2022asynchronous, koloskova2022sharper, alahyane2025optimizing}.
  Asynchronous optimization itself dates back to classical work such as \citet{tsitsiklis1986distributed}, and was later popularized in machine learning by \citet{recht2011hogwild, agarwal2011distributed}; see \citet{assran2020advances} for a survey.
  
  Building on these developments, \citet{tyurin2023optimal} initiated a general time complexity framework for parallel stochastic optimization and showed that several existing asynchronous methods are not time-optimal under their model.
  This led to the \rennala method, which attains an optimal time complexity.
  Subsequent works refined and extended these ideas and produced time-optimal variants of asynchronous \sgd, including \citep{maranjyan2025ringmaster, maranjyan2026ringleader}.
  Further extensions and related developments include \citep{maranjyan2025ata, wu2026optimalasynchronous}, and a comprehensive overview appears in the work of \citet{maranjyan2025thesis}.
  Among these, \citet{maranjyan2025ringmaster} is particularly relevant to our work: their \ringmaster method is an asynchronous \sgd with the same optimal time complexity as \rennala.
  Beyond the fixed-computation-time setting of \Cref{ass:fixed_time}, several papers extend the framework of \citet{tyurin2023optimal} by considering alternative time models and settings; see, e.g., \citep{tyurin2024freya, tyurin2024shadowheart, tyurin2024optimalgraph, maranjyan2025mindflayer}.
  
  The second line of work most relevant to this paper is variance reduction.
  Classical variance-reduced methods---including \algname{SVRG} \citep{johnson2013accelerating, zeyuan2016improvedSVRG}, \algname{S2GD} \citep{S2GD}, \algname{SAGA} \citep{SAGA}, \algname{SPIDER} \citep{SPIDER}, \algname{SARAH} \citep{SARAH}, \algname{JacSketch} \citep{JacSketch}, \algname{L-SVRG} \citep{L-SVRG}, \algname{SNVRG} \citep{zhou2020stochastic}, \algname{PAGE} \citep{PAGE},  and \storm \citep{STORM}---provide improved oracle complexity guarantees in stochastic nonconvex optimization.
  Despite their strong theory, they have been less common in large-scale deep learning practice; one explanation is discussed by \citet{defazio2019ineffectiveness}.
  More recently, \citet{yuan2025mars} revisited variance reduction for training large language models and proposed a practical modification inspired by \storm \citep{STORM}, reporting speedups in LLM training.
  \section{Problem Setup}
  We consider the nonconvex optimization problem
  \begin{equation}\label{eq:problem}
    \minimize_{x \in \R^d} 
    \left\{ f(x) \coloneqq \ExpSub{\xi \sim \cD}{f(x;\xi)} \right\} ,
  \end{equation}
  where $f(x; \xi)$ is the loss function evaluated on a data sample $\xi$ drawn from distribution~$\cD$, and the model is parameterized by $x \in \mathbb{R}^d$ with $d$ denoting the parameter dimensionality.
  
  We consider a distributed learning setting with $n$ workers, where each worker~$i$ has access to the same data distribution~$\cD$.
  This setting is common in data centers with either unified memory or uniformly partitioned data.
  Following the \textit{fixed computation model} \citep{mishchenko2022asynchronous}, we formalize the heterogeneous computation times as follows:
  \begin{assumption}[Fixed Computation Model]\label{ass:fixed_time}
    Each worker~$i$ requires $\tau_i$~seconds to compute one stochastic gradient~$\nabla f(x;\xi)$.
    Without loss of generality, we assume
    \begin{equation*}
      0 < \tau_1 \le \tau_2 \le \cdots \le \tau_n .
    \end{equation*}
  \end{assumption}
We assume instantaneous communication (zero latency) between workers and the server in both directions.
This is the standard modeling assumption in prior work on time complexity for distributed stochastic optimization \citep{mishchenko2022asynchronous, koloskova2022sharper, tyurin2023optimal, maranjyan2025ringmaster}, and we adopt it here in order to make a direct comparison with these results.

We stress that this is a modeling simplification rather than a claim that communication is negligible in practice.
Explicitly modeling communication costs leads to a substantially richer theoretical problem, since one must then also specify which communication-reduction mechanisms are allowed, such as compression, quantization, sparsification, or local updates.
These questions are important, but largely orthogonal to the present paper, whose goal is to understand how variance reduction affects time complexity under heterogeneous worker speeds within the standard theoretical model.
  \subsection{Assumptions}
  We make the following standard assumptions:
  \begin{assumption}[Lower boundedness]\label{ass:lower_bound}
    There exists $f^* > -\infty$ such that $f(x) \geq f^*$ for all $x \in \R^d$.
    We define $\Delta \eqdef f(x^0) - f^*,$ where $x^0$ is the starting point of the optimization methods.
  \end{assumption}
  \begin{assumption}\label{ass:unbiased_bounded_variance}
    For every $\xi$, the function $f(x; \xi)$ is differentiable with respect to its first argument $x$.
    Moreover, the stochastic gradients are unbiased and have bounded variance $\sigma^2 \geq 0$, that is,
    \begin{gather*}
       \ExpSub{\xi \sim \cD}{\nabla f(x;\xi)} = \nabla f(x), 
          \quad \forall x \in \R^d , \\
       \ExpSub{\xi \sim \cD}{\|\nabla f(x;\xi) - \nabla f(x)\|^2} \leq \sigma^2,
          \quad \forall x \in \R^d .
  \end{gather*}
  \end{assumption}
  \begin{assumption}[Mean-squared smoothness]\label{ass:ms_smoothness}
    There exists $\bar L>0$ such that for all $x,y\in\R^d$,
    \begin{equation*}
      \ExpSub{\xi\sim\cD}{\|\nabla f(x;\xi) - \nabla f(y;\xi)\|^2}
      \le \bar L^2 \| x-y \|^2 .
    \end{equation*}
  \end{assumption}
  This assumption is stronger than classical $L$--smoothness of $f$, as shown in the following lemma.
  \begin{lemma}[Proof in \Cref{proof:l_smooth}]
    \label{lem:l_smooth}
    Mean-squared smoothness (\Cref{ass:ms_smoothness}) implies that $f$ is $\bar L$--smooth, i.e.
    \begin{equation*}
      \norm{\nabla f(x)-\nabla f(y)} \le \bar L \norm{x-y}, \qquad \forall x,y\in\R^d ~.
    \end{equation*}
  \end{lemma}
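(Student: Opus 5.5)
The plan is to write $\nabla f(x)-\nabla f(y)$ as the expectation of the difference of stochastic gradients, using the unbiasedness part of \Cref{ass:unbiased_bounded_variance}, and then pass the norm inside the expectation via Jensen's inequality. Concretely, for fixed $x,y\in\R^d$, unbiasedness gives
\begin{equation*}
  \nabla f(x)-\nabla f(y) = \ExpSub{\xi\sim\cD}{\nabla f(x;\xi)-\nabla f(y;\xi)} ,
\end{equation*}
where linearity of expectation is legitimate because both $\nabla f(x;\xi)$ and $\nabla f(y;\xi)$ are integrable. Integrability of each holds because each has finite second moment: its variance is at most $\sigma^2$ and its mean is finite.

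Next, the squared Euclidean norm $\norm{\cdot}^2$ is convex, so Jensen's inequality (equivalently, $\norm{\mathbb{E}[Z]}^2 \le \mathbb{E}\norm{Z}^2$, which follows from $\mathbb{E}\norm{Z-\mathbb{E}Z}^2\ge 0$) yields
\begin{equation*}
  \norm{\nabla f(x)-\nabla f(y)}^2 \le \ExpSub{\xi\sim\cD}{\norm{\nabla f(x;\xi)-\nabla f(y;\xi)}^2} \le \bar L^2\norm{x-y}^2 ,
\end{equation*}
where the last step is exactly \Cref{ass:ms_smoothness}. Taking square roots gives the claim.

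There is no real obstacle here. The only point that needs care is justifying that the expectation of the difference is well defined and equals the difference of the gradients. I would dispatch this with the finite-second-moment remark above, using the bias--variance identity $\mathbb{E}\norm{Z}^2=\norm{\mathbb{E}Z}^2+\mathbb{E}\norm{Z-\mathbb{E}Z}^2$ as the cleanest route to both the integrability and the Jensen step at once.
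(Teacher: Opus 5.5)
Your proof is correct and follows the paper's argument exactly: you use unbiasedness to write $\nabla f(x)-\nabla f(y)$ as the expectation of the stochastic-gradient difference, apply Jensen's inequality to the squared norm, invoke mean-squared smoothness, and take square roots. Your remark on integrability is a harmless extra detail that the paper leaves implicit.
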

  Mean-squared smoothness also implies expected similarity, a centered gradient-difference bound closely related to the notion of Hessian variance introduced by \citet{szlendak2021permutation}.
  \begin{lemma}[Proof in \Cref{proof:expected_similarity}]
    \label{lem:expected_similarity}
    Mean-squared smoothness (\Cref{ass:ms_smoothness}) implies expected similarity with the same constant $\bar{L}$:
    \begin{align*}
      \E{ \sqnorm{ \nabla f(x;\xi) - \nabla f(y;\xi) - (\nabla f(x) - \nabla f(y)) } } \leq \bar{L}^2 \sqnorm{x-y}, \quad \forall\, x,y \in \R^d .
    \end{align*}
  \end{lemma}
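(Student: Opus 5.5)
The plan is to read the left-hand side as the variance of a single random vector and then use the fact that variance never exceeds the second moment. Fix $x,y\in\R^d$ and set $Z \eqdef \nabla f(x;\xi) - \nabla f(y;\xi)$ with $\xi\sim\cD$.

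First I will identify the mean of $Z$. By the unbiasedness part of \Cref{ass:unbiased_bounded_variance}, applied separately at $x$ and at $y$ and combined by linearity of expectation, $\E{Z} = \nabla f(x) - \nabla f(y)$. So the quantity to bound is exactly $\E{\sqnorm{Z - \E{Z}}}$.

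Next I will apply the bias--variance decomposition
\begin{equation*}
  \E{\sqnorm{Z - \E{Z}}} = \E{\sqnorm{Z}} - \sqnorm{\E{Z}} \le \E{\sqnorm{Z}} .
\end{equation*}
This identity follows by expanding the square and noting that the cross term equals $-2\sqnorm{\E{Z}}$. Finally, \Cref{ass:ms_smoothness} gives $\E{\sqnorm{Z}} \le \bar L^2\sqnorm{x-y}$, which is the claimed bound with the same constant $\bar L$.

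The only point needing care is integrability: the expansion requires $\E{\sqnorm{Z}}<\infty$. This holds because \Cref{ass:ms_smoothness} bounds it by a finite quantity. That finiteness also ensures $\E{Z}$ exists, since $\E{\|Z\|}\le(\E{\sqnorm{Z}})^{1/2}$ by Jensen's inequality. I do not expect any other obstacle. The argument is a one-line variance inequality, and it does not even need \Cref{lem:l_smooth}.
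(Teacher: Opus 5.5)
Your proposal is correct and is the same as the paper's proof. Both identify $\nabla f(x)-\nabla f(y)$ as the mean of $\nabla f(x;\xi)-\nabla f(y;\xi)$, write the left-hand side as $\mathbb{E}\|Z\|^2-\|\mathbb{E}Z\|^2$, drop the nonpositive term, and finish with mean-squared smoothness. One small point in your favor: you state explicitly that the mean identification uses the unbiasedness part of \Cref{ass:unbiased_bounded_variance}, which the paper relies on only implicitly.
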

  Under these assumptions, our objective is to find an $\varepsilon$--stationary point: a (possibly random) point $x$ satisfying $\mathbb{E}[\|\nabla f(x)\|^2] \leq \varepsilon$.
  \section{Background and Motivation}
  Under Assumptions~\ref{ass:lower_bound} and~\ref{ass:unbiased_bounded_variance}, combined with $L$--smoothness of $f$ (instead of \Cref{ass:ms_smoothness}), the \sgd method with iterations $x^{k+1} = x^k - \gamma \nabla f(x^k;\xi^k)$ and i.i.d.\ $\xi^k \sim \cD$ achieves optimal oracle complexity \citep{ghadimi2013stochastic,arjevani2022lower} of $\mathrm{O} (\nicefrac{L \Delta}{\varepsilon} + \nicefrac{\sigma^2 L \Delta}{\varepsilon^2})$.
  Oracle complexity measures the total number of stochastic gradient evaluations; on a single machine, this corresponds to the iteration complexity.
  
  However, achieving time-optimal performance with $n$ parallel machines requires effectively parallelizing \sgd.
  \citet{tyurin2023optimal} addressed this question by proposing the \rennala method, which is a minibatch extension of \sgd.
  Instead of computing a single gradient, \rennala collects $B$ gradients and takes a step using their average.
  The key to achieving optimal time complexity is distributing the batch of size $B$ across all available machines and computing it asynchronously (as quickly as possible).
  By setting $B=\max\{1,\nicefrac{\sigma^2}{\varepsilon}\}$, \rennala attains the lower bound on time complexity.
  Importantly, the oracle complexity---the total number of gradient evaluations---remains $\mathrm{O} (\nicefrac{\sigma^2 L \Delta}{\varepsilon^2})$, matching the lower bound up to a constant factor (see the last column in \Cref{table}).
  
  When we additionally invoke \Cref{ass:ms_smoothness}, which is stronger than $L$--smoothness of $f$ (\Cref{lem:l_smooth}), we can achieve better oracle complexity on a single machine using variance reduction methods such as \storm \citep{STORM} or \algname{SNVRG} \citep{zhou2020stochastic}.
  These methods improve the oracle complexity to $\mathrm{O} (\nicefrac{\bar L\Delta\sigma}{\varepsilon^{\nicefrac{3}{2}}} + \nicefrac{\sigma^2}{\varepsilon} )$ and are optimal \citep{arjevani2022lower}.
  
  This motivates our central question: does time complexity also improve under the stronger \Cref{ass:ms_smoothness}, and which method can we use to surpass \rennala's time complexity?
  We focus on the \storm algorithm \citep{STORM} and try to make use of its \mvr technique because it has favorable properties and is easier to parallelize across multiple machines.
  \section{\rennalamvrtitle}
  \label{sec:algo}
  We first recall the \mvr technique of \citet{STORM}, the main technique behind their \storm algorithm.
  Given a starting point $x^0\in\R^d$, an initial gradient estimator $g^0$, a momentum parameter $p\in(0,1]$, and a stepsize $\gamma>0$, The \mvr update at each iteration $k$ takes the following form:
  \begin{align}\label{eq:mvr_step}
    x^{k+1} &= x^k - \gamma g^k,\\
    g^{k+1} &= \nabla f(x^{k+1};\xi^k) + (1-p)(g^k - \nabla f(x^{k};\xi^k)),
  \end{align}
  where $\xi^k$ is sampled i.i.d. from $\cD$.
  Note that setting $p=1$ recovers standard \sgd.
  
  A distinctive feature of \mvr is that it uses two stochastic gradients per iteration, evaluated at $x^k$ and at $x^{k+1}$ using the same sample $\xi^k$.
  To extend this idea to the distributed parallel setting, we follow the same paradigm as in \rennala: instead of forming a single stochastic gradient, the server continuously aggregates stochastic gradients computed by workers and constructs minibatch estimators.
  In our case, each worker computes \emph{two} gradients---one at $x^k$ and one at $x^{k+1}$---for each sampled data point, and the server forms two minibatch averages from the first $B$ arrivals.
  The resulting method is summarized in \Cref{algo}.
  \begin{algorithm}[htb]
    \caption{\rennalamvr}\label{algo}
    \begin{algorithmic}[1]
      \STATE \textbf{Input:} initial point $x^0\in\R^d$ (stored on both the server and the workers), stepsize $\gamma>0$, minibatch size $B\in\{1,2,\ldots\}$, momentum parameter $p\in(0,1]$, initial batch size $B_0$
      \STATE Compute initial gradient estimator $g^0$ asynchronously using all workers with batch size $B_0$
      \FOR{$k = 0, \dots, K - 1$}
        \STATE Update the model: $x^{k+1} \coloneqq x^k - \gamma g^k$
        \STATE Broadcast $x^{k+1}$ to workers (workers keep $x^k$ cached), then workers compute gradients at both $x^k$ and $x^{k+1}$
        \STATE Initialize $g^{-} \coloneqq 0$, $g^{+} \coloneqq 0$ and $b \coloneqq 0$
        \WHILE{$b < B$}
          \STATE Gradients $\nabla f\big(x^k; \xi_{i^{k,b}}^{k,b}\big)$ and $\nabla f\big(x^{k+1}; \xi_{i^{k,b}}^{k,b}\big)$ arrive from worker $i^{k,b}$
          \STATE $g^{-} \leftarrow g^{-} + \nabla f\big(x^k; \xi_{i^{k,b}}^{k,b}\big)$ \\ 
                 $g^{+} \leftarrow g^{+} + \nabla f\big(x^{k+1}; \xi_{i^{k,b}}^{k,b}\big)$
          \STATE Worker $i^{k,b}$ immediately begins computing new gradients at $x^k$ and $x^{k+1}$
          \STATE $b \leftarrow b + 1$
        \ENDWHILE
        \STATE $g^{k+1} \coloneqq \frac{g^+}{B} + (1-p) (g^k - \frac{g^-}{B})$
      \ENDFOR
    \end{algorithmic}
  \end{algorithm}
  \subsection{Algorithm Description}
  The algorithm begins by obtaining an initial gradient estimator $g^0$ (for example, a minibatch average computed at $x^0$).
  At iteration $k$, the server first updates the model via
  $x^{k+1}=x^k-\gamma g^k$,
  and then ensures that the workers have access to the pair $(x^k,x^{k+1})$.
  In practice, it suffices to broadcast only the new point $x^{k+1}$ since $x^k$ was broadcast in the previous iteration and can be cached by the workers.
  
  Workers operate asynchronously and continuously.
  Each time a worker samples $\xi$, it computes the gradient pair
  $\bigl(\nabla f(x^k;\xi), \nabla f(x^{k+1};\xi)\bigr)$
  and sends this pair to the server.
  Thus, one arrival to the server consists of \emph{two} stochastic gradients corresponding to the same sample $\xi$.
  The server collects the first $B$ such arrivals, that is, $B$ gradient pairs, and forms the sums $g^-$ and $g^+$ from the gradients evaluated at $x^k$ and $x^{k+1}$, respectively.
  Equivalently, each iteration uses $B$ gradient pairs, or $2B$ stochastic gradient evaluations.
  
  Once these $B$ arrivals have been collected, the server constructs the next estimator as
  $$
    g^{k+1} = \frac{g^+}{B} + (1-p)\left(g^k - \frac{g^-}{B}\right),
  $$
  and then proceeds to the next iteration.
  \subsection{Connection to \rennalatitle}
  When $p=1$, the update simplifies to
  $g^{k+1} = \nicefrac{g^+}{B}$,
  so the method reduces to minibatch \sgd as a special case, using only gradients at the current point $x^{k+1}$.
  In this regime, the additional computation of gradients at $x^k$ (i.e., the $g^-$ term) is unnecessary.
  If we therefore modify the worker routine to compute only $\nabla f(x^{k+1};\xi)$, the resulting asynchronous minibatch \sgd implementation coincides with \rennala \citep{tyurin2023optimal}.
  \section{Theoretical Results}\label{sec:theory}
  This section presents the main theoretical guarantees for \rennalamvr (\Cref{algo}).
  We first establish its iteration complexity and the resulting \emph{oracle complexity}---the total number of stochastic gradient evaluations computed by the algorithm.
  We then derive a time complexity bound, which is the more informative metric in parallel settings with heterogeneous workers.
  \subsection{Iteration Complexity}\label{sec:iter_complexity}
  We begin with an iteration complexity result under our standard assumptions.
  \begin{theorem}[Iteration Complexity; Proof in \Cref{proof:iteration_complexity}]\label{thm:iteration_complexity}
    Under Assumptions~\ref{ass:lower_bound}--\ref{ass:ms_smoothness}, let the stepsize in \rennalamvr (\Cref{algo}) be
    $\gamma = \nicefrac{1}{4\bar{L}}$.
    Fix $\varepsilon>0$ and assume $\varepsilon < \sigma^2$ and $\varepsilon < 2\bar{L}\Delta$.
    Choose 
    $B=\left\lceil \nicefrac{6\sigma}{\sqrt{\varepsilon}}\right\rceil$,
    $p = \nicefrac{\sqrt{\varepsilon}}{\sigma}$
    and
    $B_0 = \left\lceil \nicefrac{6\sigma^2}{\varepsilon}\right\rceil$.
    Then,
    \begin{equation*}
      \frac{1}{K} \sum_{k=0}^{K-1} \E{ \sqnorm{ \nabla f(x^k) } } \ \leq \varepsilon ~,
    \end{equation*}
    for
    \begin{equation*}
      K \ge \frac{24\Delta\bar L}{\varepsilon} + \frac{\sigma}{\sqrt{\varepsilon}} ~.
    \end{equation*}
  \end{theorem}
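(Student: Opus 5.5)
We follow the standard Lyapunov analysis of \mvr/\storm, adapted to minibatches of size $B$. The argument rests on two recursions: a descent inequality for $f$ and a recursion for the estimator error $e^k \eqdef g^k - \nabla f(x^k)$. We then combine them into a potential of the form $\Phi^k \eqdef f(x^k) - f^* + \frac{\gamma}{p}\sqnorm{e^k}$ (up to a constant factor) and telescope.

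\textbf{Descent step.} By \Cref{lem:l_smooth}, $f$ is $\bar L$--smooth. The standard inequality for $x^{k+1}=x^k-\gamma g^k$ then gives
\begin{equation*}
f(x^{k+1}) \le f(x^k) - \frac{\gamma}{2}\sqnorm{\nabla f(x^k)} - \Bigl(\frac{1}{2\gamma}-\frac{\bar L}{2}\Bigr)\sqnorm{x^{k+1}-x^k} + \frac{\gamma}{2}\sqnorm{e^k}.
\end{equation*}
With $\gamma = 1/(4\bar L)$, this leaves a negative term $-\frac{1}{4\gamma}\sqnorm{x^{k+1}-x^k}$ that can absorb error.

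\textbf{Error recursion.} From the update we can write
\begin{equation*}
e^{k+1} = (1-p)e^k + \frac{1}{B}\sum_{b}\Bigl[\nabla f(x^{k+1};\xi_b) - \nabla f(x^{k+1}) - (1-p)\bigl(\nabla f(x^k;\xi_b)-\nabla f(x^k)\bigr)\Bigr].
\end{equation*}
The $B$ samples in iteration $k$ are i.i.d. and independent of $x^k$, $x^{k+1}$ and $g^k$, regardless of which worker returns them. This holds because $x^{k+1}$ is fixed before the samples are drawn, and the asynchronous arrival order does not bias the sample distribution. Hence the cross term vanishes, and the bracket has variance at most $1/B$ times the single-sample variance. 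We split the bracket as $(1-p)\bigl[\text{centered difference}\bigr] + p\bigl[\text{centered } \nabla f(x^{k+1};\xi)\bigr]$. Then \Cref{lem:expected_similarity} together with \Cref{ass:unbiased_bounded_variance} gives
\begin{equation*}
\E{\sqnorm{e^{k+1}}} \le (1-p)\,\E{\sqnorm{e^k}} + \frac{2\bar L^2}{B}\,\E{\sqnorm{x^{k+1}-x^k}} + \frac{2p^2\sigma^2}{B}.
\end{equation*}
The initialization gives $\E{\sqnorm{e^0}} \le \sigma^2/B_0$.

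\textbf{Combining.} We add $\frac{\gamma}{p}$ times the error recursion to the descent inequality. The $\sqnorm{e^k}$ terms telescope, with net coefficient $\frac{\gamma}{2}-\frac{\gamma}{p}\cdot p<0$. The distance term carries the coefficient $\frac{2\gamma\bar L^2}{pB} - \frac{1}{4\gamma}$, and this is where I expect the main obstacle: we need it to be nonpositive. This requires $pB \gtrsim 32\gamma^2\bar L^2 = 2$, which holds since $pB \ge 6$ for the chosen $B$ and $p$. Summing over $k$ then yields
\begin{equation*}
\frac{1}{K}\sum_{k=0}^{K-1}\E{\sqnorm{\nabla f(x^k)}} \le \frac{2\Delta}{\gamma K} + \frac{2\sigma^2}{pB_0K} + \frac{4p\sigma^2}{B}.
\end{equation*}

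\textbf{Plugging in the parameters.} With $p=\sqrt{\varepsilon}/\sigma$ and $B\ge 6\sigma/\sqrt{\varepsilon}$, the last term is at most $\frac{2}{3}\varepsilon$. The assumption $\varepsilon<\sigma^2$ guarantees $p\le 1$. The middle term is at most $\frac{\sigma\sqrt{\varepsilon}}{3K}$, since $B_0\ge 6\sigma^2/\varepsilon$; this is at most $\varepsilon/6$ once $K\ge 2\sigma/\sqrt{\varepsilon}$. The first term is $8\bar L\Delta/K$, which is at most $\varepsilon/6$ for $K\ge 48\bar L\Delta/\varepsilon$.

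These thresholds are off by constants from the stated bound $K \ge \frac{24\Delta\bar L}{\varepsilon} + \frac{\sigma}{\sqrt{\varepsilon}}$. To recover it exactly, I would tighten the constant bookkeeping: weight the Lyapunov term as $\frac{\gamma}{2p}$ rather than $\frac{\gamma}{p}$, and allocate the $\varepsilon$ budget differently among the three terms. The assumption $\varepsilon<2\bar L\Delta$ is only used to ensure $K\ge 1$ is consistent with the bound. The key step, the independence and variance computation for the asynchronously collected minibatch, is the only genuinely new ingredient relative to \storm.
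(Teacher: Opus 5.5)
Your proposal is correct and follows the paper's proof. It uses the same descent inequality, the same split of the minibatch noise into a $p$-weighted centered gradient at $x^{k+1}$ plus a $(1-p)$-weighted centered difference (handled by bounded variance and expected similarity), and the same kind of Lyapunov function $f(x^k)-f^*+c\,\frac{\gamma}{p}\|g^k-\nabla f(x^k)\|^2$; the paper takes $c=\tfrac12$ and keeps the sharper $(1-p)^2$ contraction via $p+(1-p)^2\le 1$.

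Your closing worry about constants is unfounded, because the required $K$ is a sum rather than a maximum. Your two $1/K$ terms are at most $\frac{8\bar L\Delta}{K}+\frac{\sigma\sqrt{\varepsilon}}{3K}=\frac{\varepsilon}{3K}\bigl(\frac{24\bar L\Delta}{\varepsilon}+\frac{\sigma}{\sqrt{\varepsilon}}\bigr)\le\frac{\varepsilon}{3}$. Together with your $\frac{2\varepsilon}{3}$ noise term, this already yields the stated bound without any retuning.
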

  Our method can be viewed as a minibatch variant of the momentum variance-reduction (\mvr) mechanism used in \storm \citep{STORM}.
  The analysis is correspondingly simpler here, since we focus on the \mvr component and do not require the additional algorithmic features of \storm.
  
  A direct corollary of \Cref{thm:iteration_complexity} is an oracle complexity bound.
  The algorithm runs for $K$ iterations; each iteration computes $2B$ stochastic gradients, and the initialization costs $B_0$ stochastic gradients.
  Hence, the total oracle complexity (i.e., the number of stochastic gradient evaluations) is
  \begin{equation}\label{eq:rennalamvr_oracle_complexity}
    B_0 + 2B \cdot K
    \;=\;
    \mathrm{O} \left( \frac{\bar L\Delta\sigma}{\varepsilon^{\nicefrac{3}{2}}} + \frac{\sigma^2}{\varepsilon} \right).
  \end{equation}
  This matches the lower bound for this problem class established by \citet{arjevani2022lower}, and therefore \rennalamvr is oracle-optimal (up to constant factors) under Assumptions~\ref{ass:lower_bound}--\ref{ass:ms_smoothness}.
  
  For comparison, in the classical $L$--smooth setting (replacing mean-squared smoothness Assumption~\ref{ass:ms_smoothness}), \rennala achieves oracle complexity
  $$
    \mathrm{O} \left( \frac{L\Delta\sigma^2}{\varepsilon^2} \right),
  $$
  which is also optimal in that setting \citep{arjevani2022lower}; see \Cref{table}.
  Nevertheless, this dependence on $\varepsilon$ is worse than the oracle complexity of \rennalamvr in \eqref{eq:rennalamvr_oracle_complexity}.
  \subsection{Time Complexity}
  \label{sec:time_complexity}
  We now turn to time complexity.
  In parallel optimization with heterogeneous workers, iteration complexity alone can be misleading, since the time per iteration depends on how quickly workers return gradients.
  \begin{theorem}[Time Complexity; Proof in \Cref{proof:time_complexity}]\label{thm:time_complexity}
    Under the assumptions and parameter choices of \Cref{thm:iteration_complexity}, the time complexity of \rennalamvr (\Cref{algo}) is given by
    \begin{align*}
      T
      = \mathrm{O} \left( \left( \frac{\bar L\Delta}{\varepsilon} + \frac{\sigma}{\sqrt{\varepsilon}} \right) \min_{m\in[n]} \left(\sum_{i=1}^{m}\frac{1}{\tau_i} \right)^{-1} \left( \frac{\sigma}{\sqrt{\varepsilon}} + m \right) + \min_{m\in[n]} \left(\sum_{i=1}^{m}\frac{1}{\tau_i} \right)^{-1}\left( \frac{\sigma^2}{\varepsilon} + m \right) \right) ~.
    \end{align*}
  \end{theorem}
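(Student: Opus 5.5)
The plan is to combine the iteration count of \Cref{thm:iteration_complexity} with a per-iteration bound on wall-clock time. The natural tool is the time-to-collect lemma mentioned in the table notes, \Cref{lem:time_to_collect_B}: with $T(B) \eqdef \min_{m\in[n]}\bigl(\sum_{i=1}^m \nicefrac{1}{\tau_i}\bigr)^{-1}(B+m)$, collecting $B$ stochastic gradients asynchronously from workers that start fresh takes at most $2T(B)$ seconds. The total time then splits into two parts: the initialization phase computing $g^0$ with $B_0$ gradients, and the $K$ main iterations.

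First I will bound the initialization. Computing $g^0$ needs $B_0=\lceil 6\sigma^2/\varepsilon\rceil$ gradients at $x^0$, so it takes at most $2T(B_0)$. Since $\varepsilon<\sigma^2$, we have $B_0\le 12\sigma^2/\varepsilon$. Because $T(\cdot)$ is monotone and scales at most linearly in a constant factor, this gives $\mathrm{O}\bigl(\min_m(\sum_{i\le m}1/\tau_i)^{-1}(\sigma^2/\varepsilon+m)\bigr)$, which is the second term of the bound.

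Next I will bound a single iteration. Each arrival is a gradient pair, which costs worker $i$ exactly $2\tau_i$ seconds. So the per-iteration time is the time to collect $B$ items in a system with computation times $2\tau_i$, namely $2\cdot 2T(B)=4T(B)$. I need to be careful about one point. At the start of iteration $k$, workers that are still computing pairs for iteration $k-1$ are either interrupted, since the algorithm broadcasts the new point and workers restart, or their stale results are discarded. Either way, the worst case is that every worker begins a fresh job at the start of the iteration, which is exactly the setting of \Cref{lem:time_to_collect_B}. I expect this to be the main obstacle: confirming that the lemma applies with fresh starts each iteration, or that discarding in-progress work costs at most one extra $\max$ over the selected workers' $\tau_i$, which is already absorbed in the $2T(B)$ bound.

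The lemma's argument itself runs as follows. For any $m$, the fastest $m$ workers running for time $t$ produce at least $\sum_{i\le m}\lfloor t/\tau_i\rfloor\ge t\sum_{i\le m}1/\tau_i - m$ results. This is at least $B$ once $t\ge (\sum_{i\le m}1/\tau_i)^{-1}(B+m)$, and minimizing over $m$ finishes it. With $B=\lceil 6\sigma/\sqrt\varepsilon\rceil\le 12\sigma/\sqrt\varepsilon$, again using $\varepsilon<\sigma^2$, each iteration costs $\mathrm{O}\bigl(\min_m(\sum_{i\le m}1/\tau_i)^{-1}(\sigma/\sqrt\varepsilon+m)\bigr)$.

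Finally, I take $K=\lceil 24\Delta\bar L/\varepsilon+\sigma/\sqrt\varepsilon\rceil=\mathrm{O}(\bar L\Delta/\varepsilon+\sigma/\sqrt\varepsilon)$. The ceiling is harmless because $\varepsilon<2\bar L\Delta$ makes the first term at least $12$. Multiplying $K$ by the per-iteration bound and adding the initialization cost yields the stated $T$.
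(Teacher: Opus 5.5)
Your proposal is correct and follows the paper's proof. The initialization is bounded by $2T(B_0)$ via \Cref{lem:time_to_collect_B}, the same lemma with doubled computation times $2\tau_i$ gives $4T(B)$ per iteration, and multiplying by $K$ and substituting $B_0$, $B$, $K$ gives the result; your remarks on the ceilings and on $T(cB)\le cT(B)$ for $c\ge 1$ just make explicit what the paper leaves implicit. The in-progress-work issue you flag is handled in the paper's lemma by allowing each worker to lose at most one job, $n_i(t)\ge\lfloor t/\tau_i\rfloor-1$, which is exactly where the factor $2$ in $2T(B)$ comes from.
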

  Define
  $$
    T(B) \;=\; \min_{m\in[n]}\left( \sum_{i=1}^{m}\frac{1}{\tau_i} \right)^{-1} (B + m) ~,
  $$
  which upper bounds the time required to obtain $B$ stochastic gradients asynchronously (\Cref{lem:time_to_collect_B}).
  Using this notation, the bound in \Cref{thm:time_complexity} can be written more compactly as
  $$
    T_{\mvr} 
    = \mathrm{O} \left( \left( \frac{\bar L\Delta}{\varepsilon} + \frac{\sigma}{\sqrt{\varepsilon}} \right) \cdot T \left(\frac{\sigma}{\sqrt{\varepsilon}}\right) 
    + T\left( \frac{\sigma^2}{\varepsilon} \right) \right).
  $$
  \paragraph{Comparison with \rennalatitle.}
  For \rennala, the corresponding time complexity is
  $$
    T_{\sgd} 
    = \mathrm{O} \left( \frac{L\Delta}{\varepsilon} \cdot T \left( \frac{\sigma^2}{\varepsilon} \right) \right).
  $$
  The key distinction is the per-iteration waiting time.
  Each iteration of \rennalamvr waits for roughly $\nicefrac{\sigma}{\sqrt{\varepsilon}}$ gradient pairs and therefore costs on the order of
  $T ( \nicefrac{\sigma}{\sqrt{\varepsilon}} )$ time, whereas each iteration of \rennala costs
  $T ( \nicefrac{\sigma^2}{\varepsilon} )$, which is always at least as large.
  This smaller per-iteration time is the main source of potential speedups for \rennalamvr.
  
  On the other hand, the dominant iteration count for \rennalamvr (for sufficiently small $\varepsilon$) scales as
  $\nicefrac{\bar L\Delta}{\varepsilon}$, which can be larger than the $\nicefrac{L\Delta}{\varepsilon}$ scaling of \rennala since $L \le \bar L$.
  Consequently, when $\bar L = \mathrm O(L)$, the reduction in per-iteration time dominates and \rennalamvr enjoys an improved overall time complexity compared to \rennala.
  %
  %
  \paragraph{Comparison with the lower bound.}
  We next relate \Cref{thm:time_complexity} to the time lower bound proved in the following section.
  The lower bound takes the form (\Cref{thm:lower_bound})
  \begin{align*}
    T = \Omega \left(
      \left( \frac{\bar L \Delta}{\sigma \sqrt{\varepsilon}} + 1 \right) \cdot T \left( \frac{\sigma^2}{\varepsilon} \right)
      \right) ~.
  \end{align*}
  First, note that the implied oracle complexity (recall: the number of stochastic gradient evaluations) is
  $$
    \left( \frac{\bar L \Delta}{\sigma \sqrt{\varepsilon}} + 1 \right) \cdot \frac{\sigma^2}{\varepsilon}
    = \frac{\bar L\Delta\sigma}{\varepsilon^{\nicefrac{3}{2}}} + \frac{\sigma^2}{\varepsilon} ~,
  $$
  which matches the oracle complexity of \rennalamvr from \Cref{thm:iteration_complexity} (and \Cref{table}).
  In other words, \rennalamvr computes the minimal number of stochastic gradients up to constant factors.
  
  The remaining gap is a batching/synchronization issue.
  The lower bound corresponds to producing gradients in batches of size $\nicefrac{\sigma^2}{\varepsilon}$, while \rennalamvr uses smaller batches of size $\nicefrac{\sigma}{\sqrt{\varepsilon}}$.
  Smaller batches may require more frequent synchronization (i.e., more frequent termination of ongoing gradient computations), which can increase the elapsed time even when the total number of computed gradients is optimal.
  
  Despite this gap in general, the upper bound can match the lower bound in regimes where
  $$
    T \left( \frac{\sigma^2}{\varepsilon} \right) \approx \frac{\sigma}{\sqrt{\varepsilon}} \cdot T \left( \frac{\sigma}{\sqrt{\varepsilon}} \right) ~.
  $$
  For example, this occurs in the homogeneous worker case with $n=\nicefrac{\sigma}{\sqrt{\varepsilon}}$ and $\tau_i=\tau$ for all $i\in[n]$.
  
  Overall, the fact that the current time upper bound for \rennalamvr does not match the lower bound highlights a key distinction between variance-reduced methods and plain \sgd under heterogeneous compute-time models: optimal oracle complexity does not automatically imply optimal time complexity.
  Closing this gap appears to require a method that can operate with much larger effective batch sizes (on the order of $\nicefrac{\sigma^2}{\varepsilon}$) while preserving the desired convergence rate.
  Within the current proof approach and stepsize restrictions, it is unclear how to achieve this via a simple modification of \rennalamvr, which suggests that fundamentally different algorithmic ideas may be needed.
  This issue is discussed in more detail in \Cref{sec:why}.
  \section{Lower Bound on Time Complexity}
  \label{sec:lower_bound}
  In this section we formalize a framework for proving \emph{lower bounds} on the time required to find an $\varepsilon$--stationary point in stochastic nonconvex optimization with heterogeneous parallel workers.
  Following \citet{tyurin2023optimal}, we specify function class $\cF$, an oracle class $\cO$, and an algorithm class $\cA$ together with a time-based interaction protocol.
  We then define a minimax notion of time complexity and state our lower bound.
  
  Compared to the protocol in \citet{tyurin2023optimal}, our setting requires one additional feature: the algorithm may request \emph{variable batch sizes} up to a maximum of $B$ points per interaction (in particular, our \rennalamvr (\Cref{algo}) asks for two stochastic gradients).
  \subsection{Protocol}
  We consider $n$ oracles (workers) running in parallel.
  At each interaction, the algorithm returns (i) which oracle to query, (ii) the time at which it requests the reply, and (iii) a batch of up to $B$ query points.
  The protocol is given below in \Cref{protocol}.
  \begin{protocol}
    \caption{Time Multiple Oracles Protocol (variable batch size up to $B$)}
    \label{protocol}
    \begin{algorithmic}[1]
      \STATE \textbf{Input:} function $f\in\cF$, algorithm $A\in\cA$, oracles and distributions $((O_1,\ldots,O_n),(\cD_1,\ldots,\cD_n))\in\cO(f)$
      \STATE $s_i^0 = (0,0,0,0)$ for all $i\in[n]$
      \STATE $t^0 = 0$
      \FOR{$k = 0,1,2,\dots$}
        \STATE $(t^{k+1},\,{\orange i^{k+1}},\,b^k,\,X^k)=A^k(G^1,\dots,G^{k})$
        \STATE $(s^{k+1}_{{\orange i^{k+1}}},\,G^{k+1}) 
                = O_{{\orange i^{k+1}}}(t^{k+1}, b^k, X^k, s^{k}_{{\orange i^{k+1}}},\xi^{k+1})$, \hfill $\xi^{k+1}\sim \cD_{{\orange i^{k+1}}}$
        \STATE $s^{k+1}_j = s^{k}_j \quad \forall j\neq {\orange i^{k+1}}$
      \ENDFOR
    \end{algorithmic}
  \end{protocol}
  In Protocol~\ref{protocol}, $s_i^k$ denotes the internal state of oracle~$i$ at interaction~$k$ (defined precisely below).
  At interaction~$k$, the algorithm outputs an oracle index ${\orange i^{k+1}}\in[n]$, a time $t^{k+1}\ge t^k$, a batch size $b^k\in[B]$, and a tuple of query points
  $X^k=(x^{k,1},\dots,x^{k,B})\in(\R^d)^B$, with the convention $x^{k,j}=0$ for $j>b^k$.
  The oracle returns a tuple
  $G^{k+1}=(g^{k+1,1},\dots,g^{k+1,B})\in(\R^d)^B$ in which only the first $b^k$ entries may be nonzero (the remaining entries are again $0$ by convention).
  Setting $B=1$ recovers the protocol of \citet{tyurin2023optimal}.
  \subsection{Oracle Model}
  Each oracle $O_i$ has an internal state encoding whether it is idle or busy and, if busy, which batch it is currently processing.
  We represent the state as
  $$
    s_i = (s_t, s_X, s_b, s_q) \in \R_{\ge 0} \times (\R^d)^B \times \{0,\dots,B\}\times \{0,1\} ,
  $$
  where $s_q = 0$ indicates that the oracle is idle, and $s_q = 1$ indicates that it is busy computing a batch of size $s_b$ started at time $s_t$ for the stored query tuple $s_X$ (with the convention that $(s_X)_j=0$ for $j>s_b$).
  
  Formally, define
  \begin{align*}
    O_{\tau_i(\cdot),B}^{f} & :
    ~\underbrace{\R_{\ge 0}}_{\text{time}}
    \times \underbrace{\{1,\dots,B\}}_{\text{batch size}}
    \times \underbrace{(\R^d)^B}_{\text{points}} 
    \times \underbrace{(\R_{\ge 0}\times(\R^d)^B\times\{0,1,\dots,B\}\times\{0,1\})}_{\text{input state}}
    \times \underbrace{\cD_{i}}_{\text{randomness}} \\
    &\; \to~\underbrace{(\R_{\ge 0}\times(\R^d)^B\times\{0,1,\dots,B\}\times\{0,1\})}_{\text{output state}}
    \times \underbrace{(\R^d)^B}_{\text{gradients}}.
  \end{align*}
  It is defined by
  \begin{align}\label{eq:time_k-batch-oracle-variable}
    O_{\tau_i(\cdot),B}^{f}(t, b, X, (s_t,s_X,s_b,s_q), \xi)
    =
    \begin{cases}
      ( (t, X, b, 1),\ 0 ),
        & \text{if } s_q = 0, \\ 
      ( (s_t, s_X, s_b, 1),\ 0 ),
        & \text{if } s_q = 1 \text{ and } t < s_t + \tau_i(s_b), \\ 
      ( (0, 0, 0, 0),\ G ),
        & \text{if } s_q = 1 \text{ and } t \ge s_t + \tau_i(s_b),
    \end{cases}
  \end{align}
  where $0$ denotes the all-zero element of $(\R^d)^B$, and $G\in(\R^d)^B$ is given by
  $$
    G_j =
    \begin{cases}
      \nabla f((s_X)_j; \xi), & j\le s_b,\\
      0, & j>s_b.
    \end{cases}
  $$
  Thus, if the oracle is idle, it starts computing the gradients for the submitted batch (of size $b$) and becomes busy.
  While busy, it returns the all-zero tuple until the computation finishes.
  Once $t$ reaches the completion time $s_t+\tau_i(s_b)$, the oracle returns the stored batch gradients and resets to idle.
  In particular, each completed reply uses a \emph{single} random sample $\xi\sim\cD_i$ shared across the $s_b$ gradients in that reply.
  
  Relative to \citet{tyurin2023optimal}, our oracle model differs in two ways.
  First, we allow $B>1$ (variable batch sizes).
  Second, the runtime is governed by a \emph{batch-time function} $\tau_i(\cdot)$ rather than a constant: $\tau_i(s_b)$ is the time required for worker~$i$ to compute $s_b$ stochastic gradients.
  
  We impose the following natural monotonicity property.
  \begin{assumption}[Batch time]\label{ass:batch-time}
    For each oracle $i\in[n]$, the batch-time function $\tau_i(\cdot)$ is nondecreasing: for any $1\le k\le \ell$, $\tau_i(k)\le \tau_i(\ell)$.
  \end{assumption}
  \subsection{Algorithm Class}
  At each interaction, the algorithm can use all previously received replies $\{G^1,\dots,G^k\}$ to select the next oracle, the request time, the batch size, and the query points.
  We also impose the natural  constraint $t^{k+1}\ge t^k$ (the algorithm cannot query in the past).
  \begin{definition}[Algorithm class]
\label{def:alg_class}
  An algorithm $A=\{A^k\}_{k=0}^{\infty}$ is a sequence of mappings such that for each $k\ge 0$,
  $$
    A^k : \bigl((\R^d)^B\bigr)^k \to \R_{\ge 0}\times [n]\times \{1,\dots,B\}\times (\R^d)^B,
  $$
  and if $(t^{k+1},\cdot)=A^k(G^1,\dots,G^k)$ and $(t^k,\cdot)=A^{k-1}(G^1,\dots,G^{k-1})$, then $t^{k+1}\ge t^k$.
  Moreover, if $A^k(\cdot)=(t^{k+1},i^{k+1},b^k,X^k)$, then $X^k=(x^{k,1},\dots,x^{k,B})$ satisfies $x^{k,j}=0$ for all $j>b^k$.
  We denote the class of all such algorithms by $\cA$.
\end{definition}
  As in \citet{tyurin2023optimal}, we restrict attention to zero-respecting algorithms.
  \begin{definition}[Zero-respecting]\label{def:zr}
  Consider an execution of Protocol~\ref{protocol}.
  For each interaction $r\ge 0$, let
  $$
    (t^{r+1},i^{r+1},b^{r},X^{r})
  $$
  be the algorithm output, where
  $$
    X^{r}=(x^{(r,1)},\dots,x^{(r,B)})\in(\R^d)^B.
  $$
  Let
  $$
    G_{\mathrm{comp}}^{r}=(g^{(r,1)},\dots,g^{(r,B)})\in(\R^d)^B
  $$
  denote the gradient tuple eventually returned when the batch submitted at interaction $r$ is completed, with the convention $g^{(r,k)}=0$ for all $k>b^r$.
  The algorithm is \emph{zero-respecting} if for all $r\ge 0$ and all $k\in[B]$,
  $$
    \operatorname{support}\bigl(x^{(r,k)}\bigr)
    \subseteq
    \bigcup_{s<r}\ \bigcup_{k'\in[B]} \operatorname{support}\bigl(g^{(s,k')}\bigr).
  $$
  We denote the class of all zero-respecting algorithms by $\cA_{\mathrm{zr}}$.
\end{definition}
  \subsection{Function and Oracle Classes}
  We define the following.
  \begin{definition}[Function class $\cF_{\Delta,\bar L}$]
    \label{def:function_class}
    We define $\cF_{\Delta,\bar L}$ as the set of all $\bar L$--smooth functions $f:\R^d\to\R$ such that
    $f(0)-\inf_x f(x)\le \Delta$.
  \end{definition}
  \begin{definition}[Oracle class]
  \label{def:oracle_class}
    For any $f\in\cF_{\Delta,\bar L}$, the class $\cO_{\tau_1(\cdot),\dots,\tau_n(\cdot)}^{\sigma^2, \bar L, B}(f)$ returns a collection of oracles
    $O_i = O_{\tau_i(\cdot),B}^{f}$ and distributions $\cD_i$ for all $i\in[n]$, where stochastic gradients $\nabla f(\cdot; \xi)$ are unbiased and $\sigma^2$-variance-bounded
    (\Cref{ass:unbiased_bounded_variance}) and satisfy mean-squared smoothness (\Cref{ass:ms_smoothness}) with constant $\bar L$.
    The oracle $O_{\tau_i(\cdot),B}^{f}$ is given by \eqref{eq:time_k-batch-oracle-variable}.
  \end{definition}
  \subsection{Time Complexity Measure}
  Let $\mathcal{P}[\cF_{\Delta,\bar L}]$ denote the set of all probability distributions over $\cF_{\Delta,\bar L}$.
  We define the minimax time complexity as follows.
\begin{definition}\label{def:time_complexity}
  For a given batch-budget $B$ and problem parameters $(\Delta,\bar L,\sigma^2)$, we define the (minimax) time complexity as the smallest time $t$ by which a zero-respecting algorithm can ensure an $\varepsilon$--stationary point in expectation, uniformly over all admissible oracle models and hard instance distributions:
  \begin{align*}
    &m_{\mathrm{time}}(B,\Delta,\bar L,\sigma^2) \\
    &\coloneqq
    \sup_{\mathcal{O}\in\cO_{\tau_1(\cdot),\dots,\tau_n(\cdot)}^{\sigma^2,\bar L,B}}
    \sup_{P_F\in\mathcal{P}[\cF_{\Delta,\bar L}]}
    \inf_{\mathsf{A}\in\cA_{\mathrm{zr}}}
    \inf \Big\{\, t\ge 0 \ \Big|\ 
    \E{ \inf_{(r,k)\in S_t} \sqnorm{ \nabla f(x^{(r,k)}) } } \le \varepsilon \Big\},
  \end{align*}
  where, for each interaction $r\ge 0$, we denote by $c^r$ the completion time of the batch submitted at interaction $r$, and define
  \begin{align*}
    S_t
    \coloneqq
    \{\, (r,k)\in \mathbb N_0\times [B] \mid 1\le k\le b^r,\ c^r\le t \,\}.
  \end{align*}
\end{definition}
  We can now state our lower bound on the time complexity under the above protocol and classes.
\begin{theorem}[Proof in \Cref{proof:lower_bound}]\label{thm:lower_bound}
  Fix $\Delta>0$, $\bar{L}>0$, $\sigma^2>0$, $0<\varepsilon<c'\bar{L}\Delta$, an integer $B\ge1$, and $n$ workers with batch-time functions $\{\tau_i(\cdot)\}_{i=1}^n$ that satisfy \Cref{ass:batch-time}.
  Write $\tau_i\coloneqq\tau_i(1)$ and assume $0<\tau_1\le\cdots\le\tau_n$.
  Let the chain constants $\Delta_0,\ell_1,\gamma_\infty$ be as in \Cref{lem:FT-props} and the estimator constants $\varsigma,\bar\ell_1$ as in \Cref{lem:gbar}.
  Define
  $$
    p \eqdef \min\!\left\{\frac{2\varepsilon \varsigma^2 }{\sigma^2},\,1\right\},
    \qquad
    L \eqdef \frac{\ell_1}{\bar\ell_1}\,\bar L \sqrt{p} ~,
    \qquad
    \lambda \eqdef \frac{\ell_1}{L}\,\sqrt{2\varepsilon}~,
    \qquad
    T \eqdef \Big\lfloor \frac{L\Delta}{2\,\Delta_0\,\ell_1\,\varepsilon} \Big\rfloor.
  $$
  Then there exist $f\in\cF_{\Delta,\bar L}$ and an oracle class $\mathcal{O}\in \cO_{\tau_1(\cdot),\dots,\tau_n(\cdot)}^{\sigma^2,\bar L, B}(f)$ such that, under Protocol~\ref{protocol},
  \begin{align*}
    m_{\mathrm{time}}(B, \Delta, \bar L, \sigma^2)\ \ge\ 
    c \cdot \left( \frac{\bar L \Delta \min\left\{\nicefrac{\sqrt{\varepsilon}}{\sigma},\,1\right\} }{\varepsilon} + 1 \right)
    \min_{m\in[n]}
    \left(\sum_{i=1}^{m}\frac{1}{\tau_i}\right)^{-1}
    \left( \frac{\sigma^2}{\varepsilon} + m \right) .
  \end{align*}
\end{theorem}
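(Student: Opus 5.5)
This follows the template of \citet{tyurin2023optimal} and \citet{arjevani2022lower}, adapted to mean-squared smoothness and batched oracles. I would take the hard instance to be the scaled Carmon--Arjevani chain $f(x)=\frac{L\lambda^2}{\ell_1}F_T(x/\lambda)$. Here $F_T$ is the chain function of \Cref{lem:FT-props}, with $\Delta_0,\ell_1,\gamma_\infty$. With the stated $\lambda$ and $T$, $f$ is $L$-smooth with $L\le\bar L$ (since $p\le1$ and, by \Cref{lem:gbar}, $\ell_1\le\bar\ell_1$). Moreover $f(0)-\inf f\le \frac{L\lambda^2}{\ell_1}\Delta_0 T\le\Delta$, so $f\in\cF_{\Delta,\bar L}$. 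Finally, $\|\nabla f(x)\|^2>2\varepsilon$ whenever the last coordinate of $x/\lambda$ is zero, because $\|\nabla F_T\|>1$ there by the large-gradient property.

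The stochastic gradient is the probability-$p$ "zero-chain" estimator of \Cref{lem:gbar}: coordinate $\mathrm{prog}(x)+1$ is revealed only with probability $p$ and is rescaled by $1/p$. From \Cref{lem:gbar}, its variance is at most $\frac{L^2\lambda^2}{\ell_1^2}\frac{\varsigma^2(1-p)}{p}\le\sigma^2$ by the choice of $p$. Its mean-squared smoothness constant is $\frac{L}{\ell_1}\frac{\bar\ell_1}{\sqrt p}=\bar L$ by the choice of $L$, so the oracle class is admissible. The key point is that every gradient in a batch is evaluated with the \emph{same} sample $\xi$. A single reply (one $\xi$, i.e.\ one Bernoulli$(p)$ coin) can therefore advance the maximal progress over its query points by at most one coordinate, whatever the batch size $b\le B$. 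Zero-respecting algorithms make the support of new queries at most one coordinate beyond previously revealed ones.

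Next I would bound time per unit of progress. By \Cref{ass:batch-time}, each reply from worker $i$ takes at least $\tau_i(1)=\tau_i$ seconds. So within any time window of length $t$, worker $i$ completes at most $\lfloor t/\tau_i\rfloor$ replies, each carrying one coin with success probability $p$. I would then reproduce the argument of \citet{tyurin2023optimal}, where progress advances at most once per success. Let $t^\ast=\frac{1}{2}\min_{m}(\sum_{i\le m}1/\tau_i)^{-1}(\frac1p+m)$. With constant probability, at most a constant number of coins succeed in any window of length $ct^\ast$. Concretely, the total number of completed replies in time $t^\ast$ is about $\sum_i\lfloor t^\ast/\tau_i\rfloor\lesssim 1/p$, and the floor only lets the fast workers contribute. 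A Chernoff bound for sums of geometric waiting times, as in their Lemma, then shows that reaching progress $T$ requires time $\gtrsim T\,t^\ast$ with probability at least $1/2$. The main obstacle is this concentration step. Progress is a sequential process where each increment needs a fresh success, and concurrent workers race. I would handle it by dominating the time to each increment by the minimum over workers of $\tau_i\cdot\mathrm{Geom}(p)$ and summing $T$ independent copies, reusing their bound.

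Finally, I would conclude the stated bound. Until progress reaches $T$, every queried point has gradient norm squared $>2\varepsilon$. Hence the expected infimum over queried points stays above $\varepsilon$ at any time below $c\,T t^\ast$. Substituting $1/p\asymp\max\{\sigma^2/\varepsilon,1\}$ gives $t^\ast\asymp\min_m(\sum_{i\le m}1/\tau_i)^{-1}(\sigma^2/\varepsilon+m)$. Substituting $T\asymp \frac{L\Delta}{\varepsilon}\asymp\frac{\bar L\Delta\sqrt p}{\varepsilon}\asymp\frac{\bar L\Delta\min\{\sqrt\varepsilon/\sigma,1\}}{\varepsilon}$ gives the first factor. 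The additive ``$+1$'' comes from the trivial bound: the algorithm needs at least one non-trivial step, and the condition $\varepsilon<c'\bar L\Delta$ ensures $T\ge1$, so $\max\{T,1\}\asymp T+1$.
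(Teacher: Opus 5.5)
\textbf{There is a genuine gap: your argument does not produce the additive ``$+1$'' term.} The rest of your plan matches the paper: the scaled chain, the calibration of $p$ and $L$ for variance and mean-squared smoothness, the single shared $\xi$ per reply making each reply one Bernoulli$(p)$ trial, the reduction to $\tau_i(1)$, and the appeal to the concentration lemma of \citet{tyurin2023optimal}.

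Your claim that $\varepsilon<c'\bar L\Delta$ ensures $T\ge 1$ is false when $p<1$. Since $L\propto\sqrt p$, we have $T=\lfloor \bar L\Delta\sqrt p/(2\Delta_0\bar\ell_1\varepsilon)\rfloor$. With $p=2\varepsilon\varsigma^2/\sigma^2$, this is of order $\lfloor \bar L\Delta/(\sigma\sqrt{\varepsilon})\rfloor$, which is $0$ whenever $\sigma\sqrt\varepsilon\gtrsim\bar L\Delta$. That regime is compatible with the hypothesis, for example with $\varepsilon$ of order $c'\bar L\Delta$ and $\sigma^2$ a large multiple of $\bar L\Delta$. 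There the chain instance does not exist at all, because the gap constraint $\frac{L\lambda^2}{\ell_1}\Delta_0 T\le\Delta$ cannot hold with $T\ge1$. Yet the theorem still asserts $m_{\mathrm{time}}\gtrsim\min_{m}\bigl(\sum_{i\le m}1/\tau_i\bigr)^{-1}(\sigma^2/\varepsilon+m)$, the time to collect $\Theta(\sigma^2/\varepsilon)$ samples. That is far more than ``one non-trivial step'', which is roughly $\tau_1$. Re-tuning the zero-chain does not rescue this. With a single link, the $\Delta$-budget and mean-squared-smoothness constraints force $1/p\lesssim(\bar L\Delta/\varepsilon)^2$, which is $\ll\sigma^2/\varepsilon$ in this regime.

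\textbf{The missing ingredient is a separate information-theoretic lower bound on a different instance.} The paper supplies this in \Cref{lem:stat_lower_bound_time}, following Lemma~11 of \citet{arjevani2022lower}. The instance is the two-point family $f_{\pm}(x)=\frac{\bar L}{2}\|x\mp r e_1\|^2$, with Gaussian noise of variance $\sigma^2/\bar L^2$ in the first coordinate. Each completed reply reveals a single sample, so by time $t$ the algorithm holds at most $N(t)=\sum_i\lfloor t/\tau_i\rfloor$ samples. A Le~Cam/Pinsker argument then forces $N(t)\gtrsim\sigma^2/\varepsilon$. A Lemma~D.7-type argument from \citet{tyurin2023optimal} converts this into $t\gtrsim\min_m\bigl(\sum_{i\le m}1/\tau_i\bigr)^{-1}(\sigma^2/\varepsilon+m)$.

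The stated bound is the maximum of the chain bound and this statistical bound, which is equivalent to their sum up to constants. The chain bound is used only when $\bar L\Delta\sqrt p/\varepsilon$ exceeds a constant. This case split also covers small positive $T$, where the concentration bound with $\tfrac{T}{2}-\log\tfrac{1}{\delta}$ gives nothing.
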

\section{Experiments}
  \label{sec:experiments}
  \begin{figure*}[t]
      \centering
      \begin{subfigure}[t]{0.32\textwidth}
          \centering
          \includegraphics[width=\textwidth]{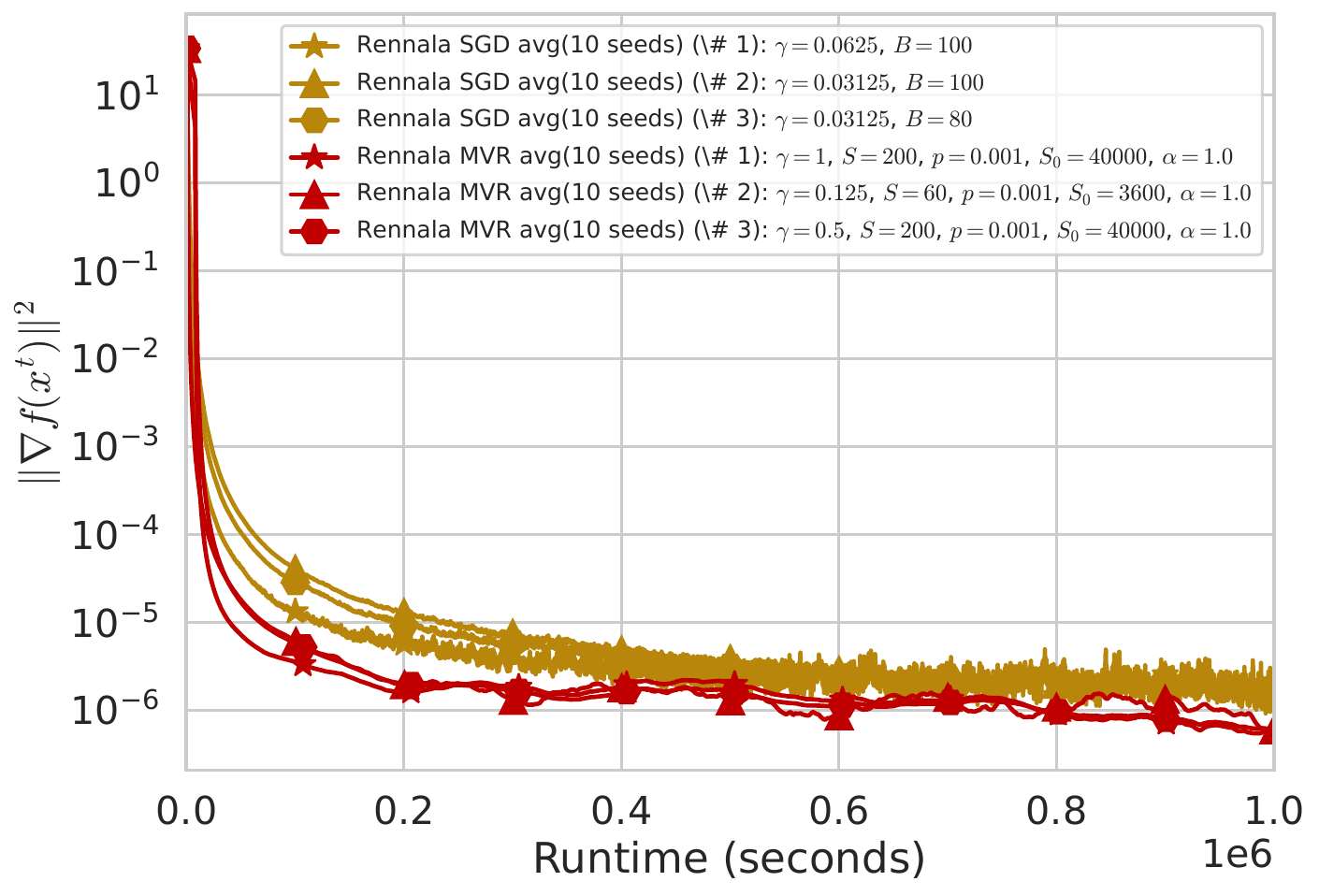}
          \caption{$\tau_i=\sqrt{i}$}
          \label{fig:quadratic_100_sqrt}
      \end{subfigure}
      \hfill
      \begin{subfigure}[t]{0.32\textwidth}
          \centering
          \includegraphics[width=\textwidth]{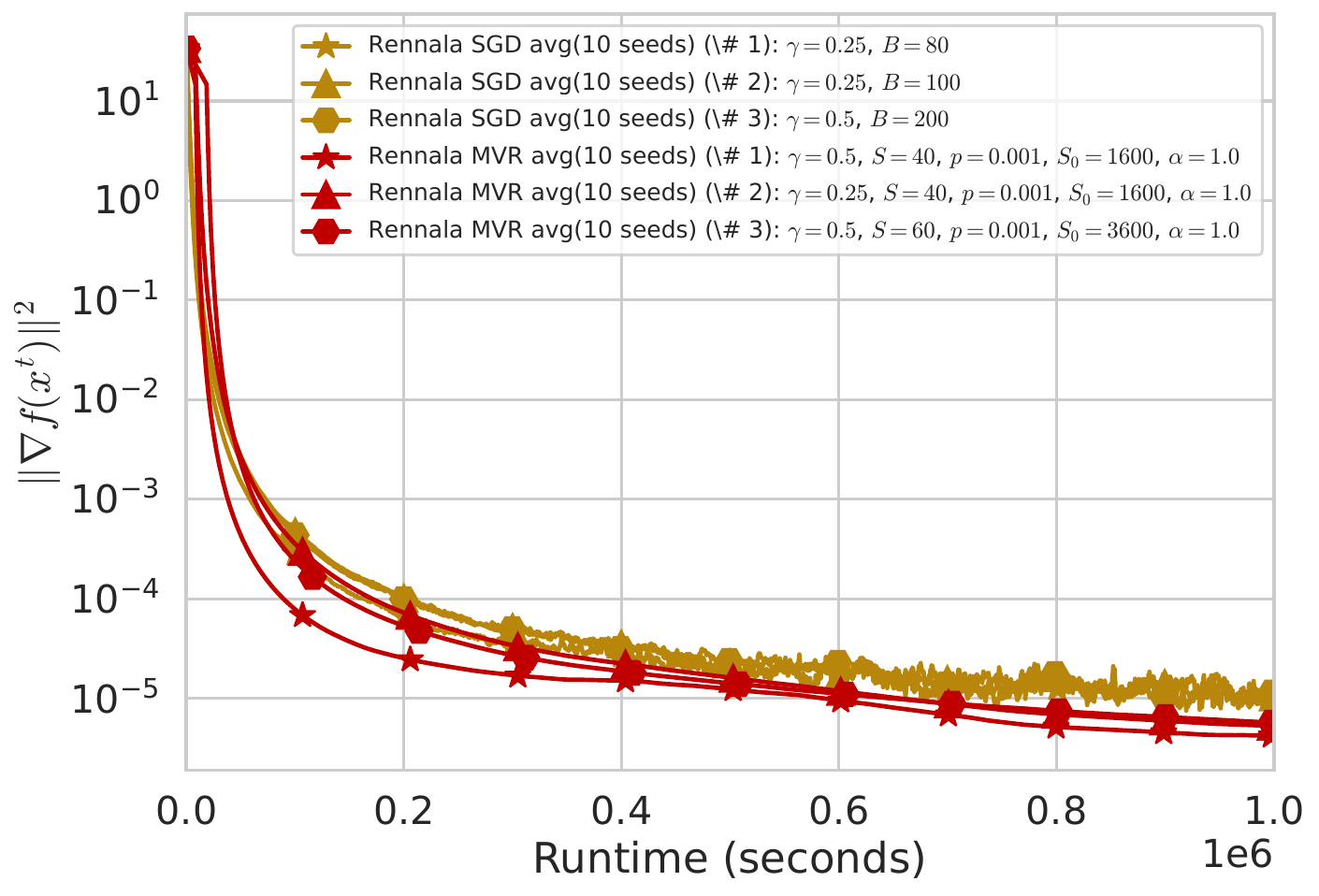}
          \caption{Mixture delays}
          \label{fig:quadratic_100_mixture}
      \end{subfigure}
      \hfill
      \begin{subfigure}[t]{0.32\textwidth}
          \centering
          \includegraphics[width=\textwidth]{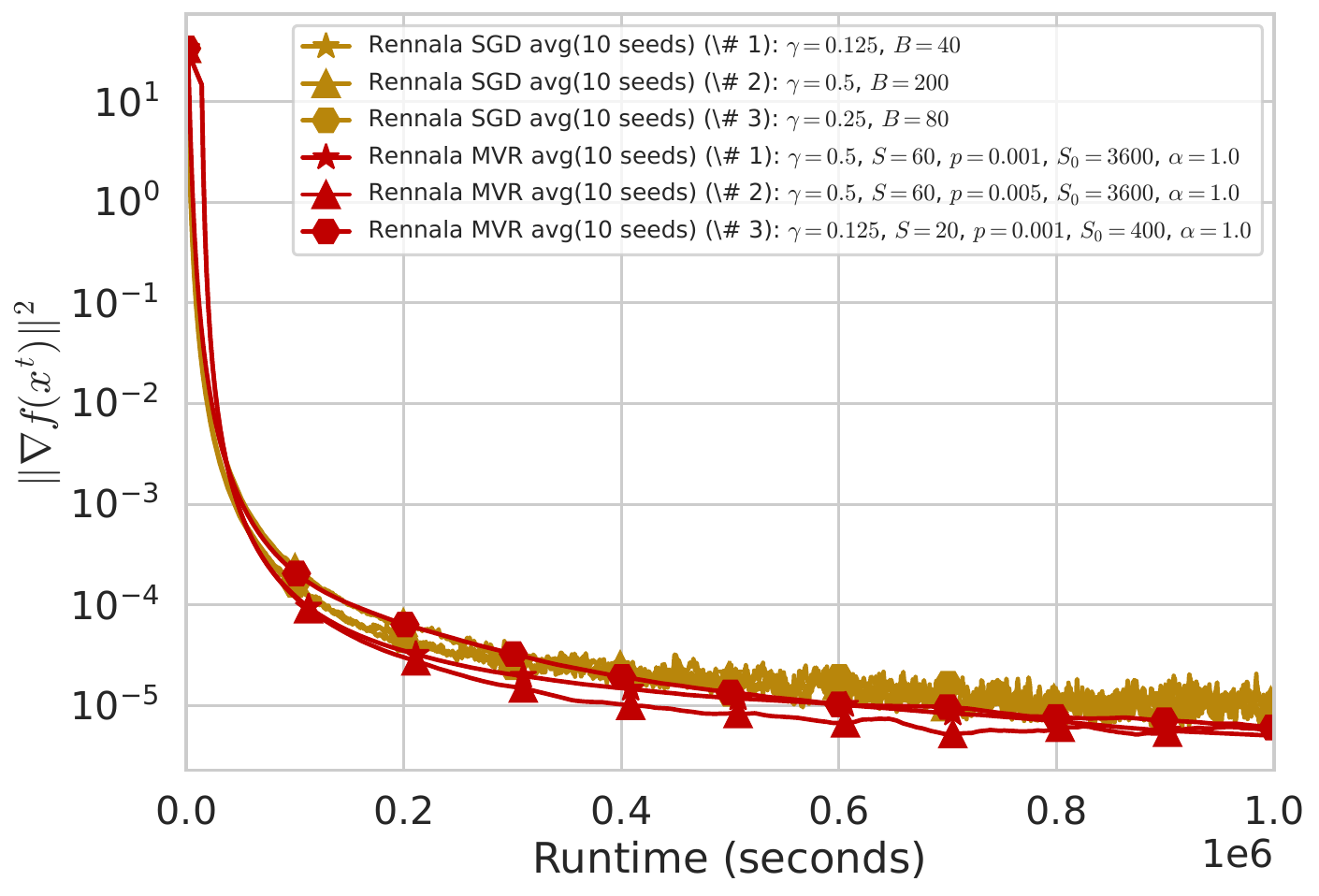}
          \caption{Uniform delays}
          \label{fig:quadratic_100_uniform}
      \end{subfigure}
      \caption{Comparison of \rennalamvr and \rennala on the stochastic quadratic benchmark with $10$ workers under three delay models.}
  \end{figure*}
  
  \begin{figure*}[t]
      \centering
      \begin{subfigure}[t]{0.32\textwidth}
          \centering
          \includegraphics[width=\textwidth]{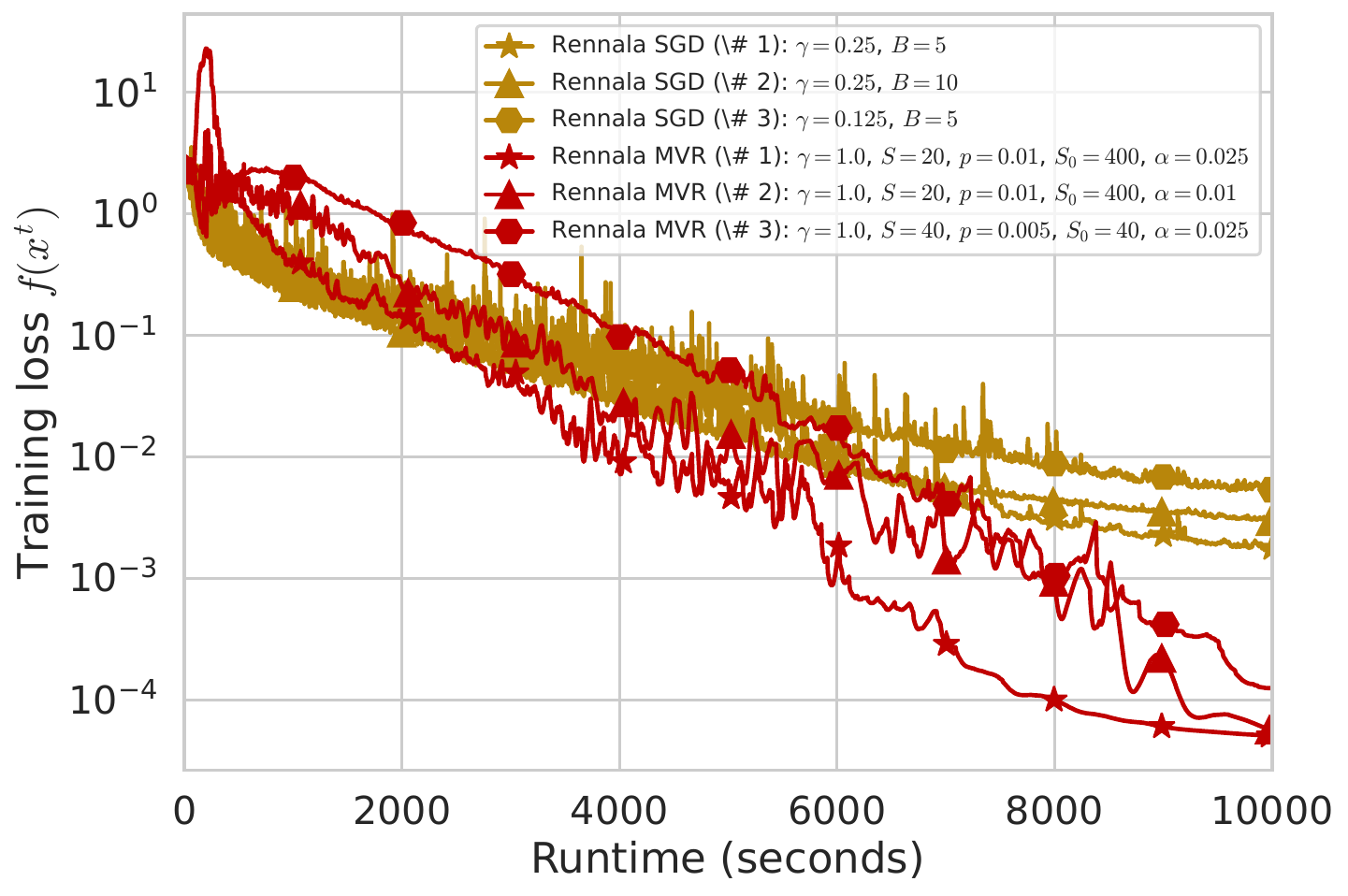}
          \caption{$\tau_i=\sqrt{i}$}
          \label{fig:nn_inexact_sqrt}
      \end{subfigure}
      \hfill
      \begin{subfigure}[t]{0.32\textwidth}
          \centering
          \includegraphics[width=\textwidth]{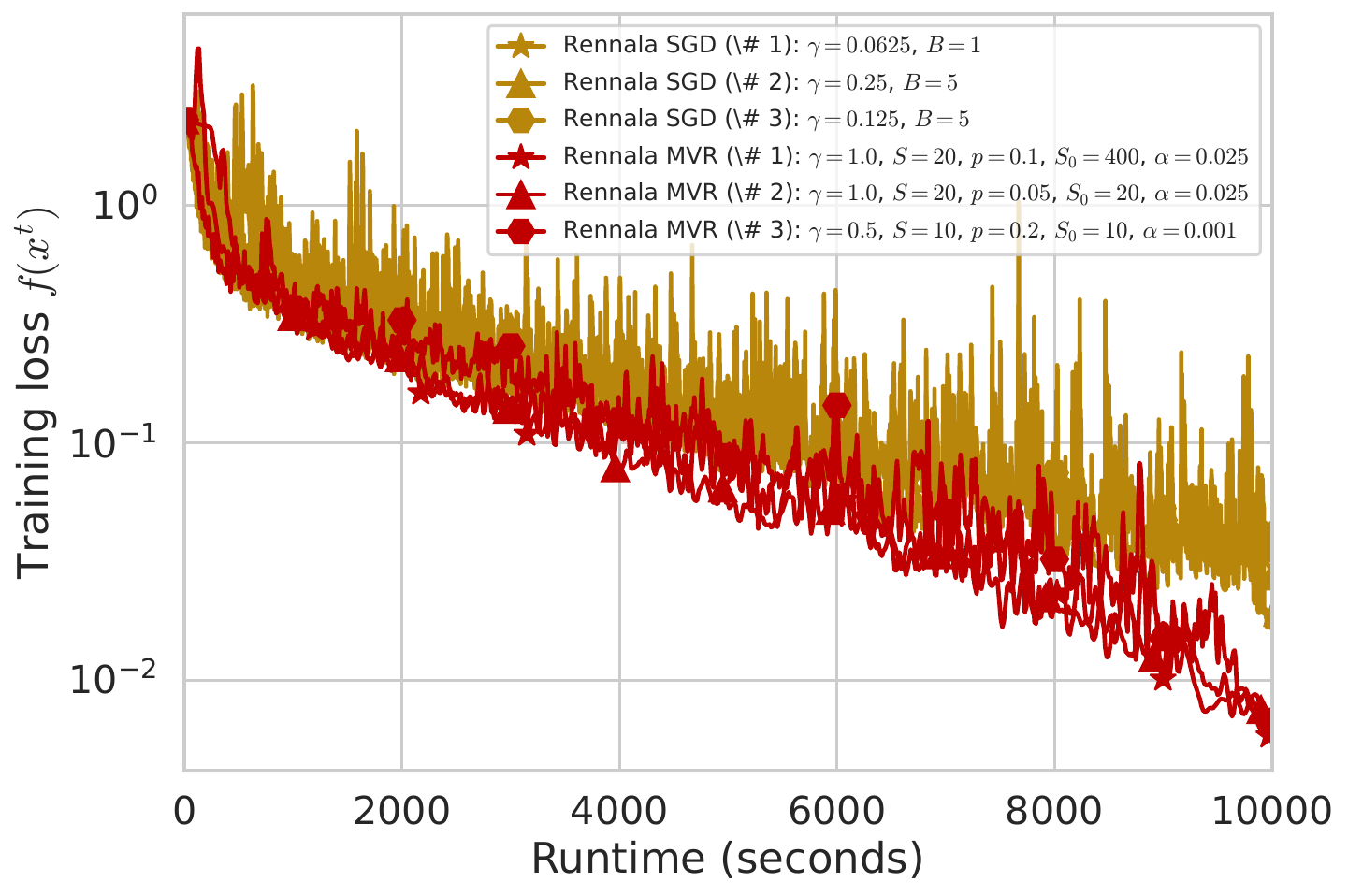}
          \caption{Mixture delays}
          \label{fig:nn_inexact_mixture}
      \end{subfigure}
      \hfill
      \begin{subfigure}[t]{0.32\textwidth}
          \centering
          \includegraphics[width=\textwidth]{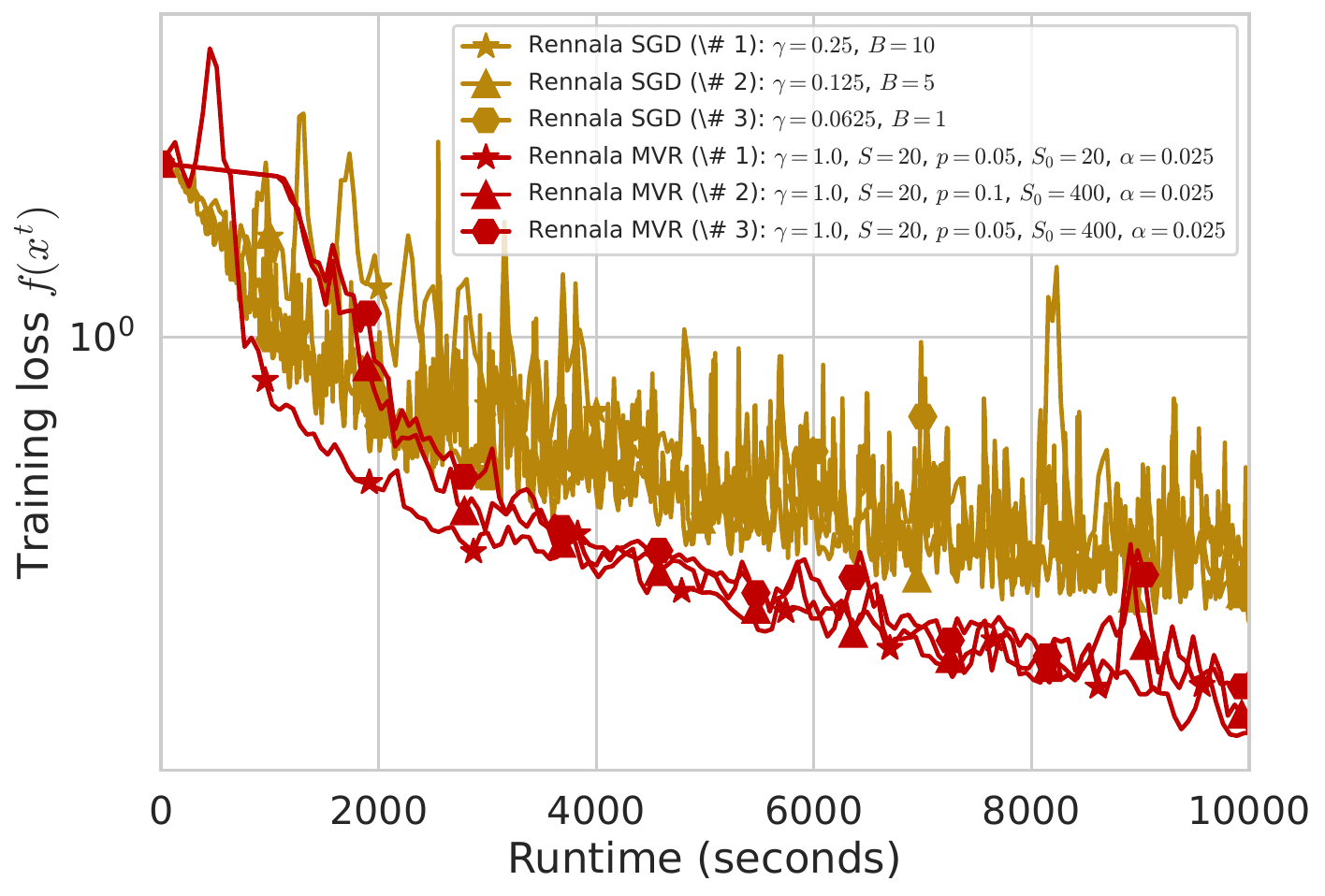}
          \caption{Uniform delays}
          \label{fig:nn_inexact_uniform}
      \end{subfigure}
      \caption{Comparison of the inexact variant of \rennalamvr and \rennala on asynchronous neural-network training over a MNIST subset with $10$ workers under three delay models.
      This figure reports experiments with the practical $\alpha$-parameterized inexact variant from \Cref{sec:nn_experiments}, not \Cref{algo} exactly.}
      \label{fig:nn_inexact_all}
  \end{figure*}
  
We evaluate our method against \rennala \citep{tyurin2023optimal} on two benchmarks: 
(i) a stochastic tridiagonal quadratic problem, where we test the exact method analyzed in the paper, and (ii) asynchronous neural-network training, where we use an inexact practical variant inspired by \algname{MARS} \citep{yuan2025mars}.
The first benchmark is theory-aligned, while the second is intended as an empirical test outside the scope of our current analysis.
  \subsection{Quadratic Benchmark}\label{sec:quadratic_experiments}
  We consider the quadratic objective
  \begin{equation*}
    f(x) = \frac{1}{2} x^\top A x - b^\top x ,
  \end{equation*}
  with dimension $d=100$, where $A\in\R^{d\times d}$ and $b\in\R^d$ are given by
  \begin{equation*}
    A = \frac{1}{4}\!\left(\!\begin{array}{cccc}
      2 & -1 & & 0\\
      -1 & \ddots & \ddots &\\
       & \ddots & \ddots & -1 \\
       0 & & -1 & 2
    \end{array}\!\right),
    \qquad
    b = \frac{1}{4}\!\left(\!\begin{array}{c}
      -1\\
      0\\
      \vdots\\
      0
    \end{array}\!\right).
  \end{equation*}
  We initialize at $x^0=(\sqrt{d},0,\dots,0)$.
  Each worker queries the same unbiased stochastic gradient oracle
  \begin{equation*}
    \nabla f(x;\zeta)
    =
    \nabla f(x) + \zeta,
  \end{equation*}
  where $\zeta \sim \mathcal{N}(0,\sigma_{\mathrm{add}}^2 \mathbb{I}_d)$ with $\sigma_{\mathrm{add}}=0.1$.
  That is, each stochastic gradient is formed by adding isotropic Gaussian noise to the exact gradient.
  
  \paragraph{Distributed setup and tuning.}
  For the quadratic experiments shown in \Cref{fig:quadratic_100_sqrt,fig:quadratic_100_mixture,fig:quadratic_100_uniform}, we model heterogeneity through fixed per-worker delays.
  We use $n=10$ workers and a simulated time budget of $10^6$.
  For each hyperparameter setting, we average over $10$ independent runs with different per-node random seeds.
  We consider three delay models: the canonical square-root profile $\tau_i=\sqrt{i}$, randomly permuted across workers; delays sampled independently from the uniform distribution on $[1,10n]$; and a three-peak Gaussian mixture, obtained by sampling three peak locations in $[1,10n]$, assigning each worker to one of the peaks, drawing its delay from the corresponding Gaussian component, and clipping the result to $[1,10n]$.
  For both \rennala and \rennalamvr, we tune the stepsize over
  \[
    \gamma \in \{2^j : j=-15,\dots,2\},
  \]
  and the minibatch size over
  \[
    B \in \{1,5,10,20,40,60,80,100,200\}.
  \]
  For \rennalamvr, we additionally tune the momentum parameter over
  \[
    p \in \{0.001,0.005,0.01,0.05,0.1,0.2,0.5,0.9\},
  \]
  and the initialization batch size over
  \[
    B_0 \in \{B,B^2\}.
  \]
  We report the stationarity measure $\|\nabla f(x)\|^2$ as a function of time.
  In all plots, we show the three best-performing hyperparameter configurations for each method, where performance is evaluated by the median value of the reported metric over the final $1\%$ of the time horizon.
  \paragraph{Results.}
The quadratic experiments in
\Cref{fig:quadratic_100_sqrt,fig:quadratic_100_mixture,fig:quadratic_100_uniform}
evaluate the exact method analyzed in this paper.
Across all three delay models, the best-tuned configurations of \rennalamvr reach lower values of \(\|\nabla f(x)\|^2\) than \rennala in the \(10\)-worker regime.
This provides empirical support for the time complexity advantage predicted by our theory.
  
\subsection{Neural Network Experiments}\label{sec:nn_experiments}
We also evaluate the practical behavior of \rennalamvr on asynchronous neural-network training.
We train a two-layer ReLU network with hidden dimension $200$ on a $3000$-example MNIST subset using $10$ heterogeneous workers, local batch size $4$, and a simulated time budget of $10^4$.
We again consider the square-root, uniform, and three-peak mixture delay models.

Unlike the quadratic benchmark in \Cref{sec:quadratic_experiments}, this experiment does \emph{not} evaluate \Cref{algo} exactly.
Instead, in the neural-network setting we study an inexact variant of \rennalamvr inspired by \algname{MARS} \citep{yuan2025mars}.
The motivation is practical: while the present network is small, for larger models the exact variant introduces additional runtime overhead, whereas the inexact variant reuses previous stochastic-gradient information and is therefore computationally more attractive.
This choice is also consistent with the empirical findings of \algname{MARS}, where the inexact variant was observed to perform comparably to the exact one in practice.
Accordingly, the neural-network experiments should be interpreted as empirical evidence about the practical behavior of variance reduction in this setting, rather than as a direct validation of the theory developed for the exact method.

We additionally introduce a scaling parameter $\alpha \in (0,1]$ and consider the update
\begin{align*}
  x^{k+1} &= x^k - \gamma g^k,\\
  g^{k+1} &= (1-p)g^k + p\nabla f(x^{k+1};\xi^k)
    + {\orange \alpha} (1-p) \left( \nabla f(x^{k+1};\xi^{k}) - \nabla f(x^{k};{\orange \xi^{k-1} }) \right).
\end{align*}
The role of $\alpha$ is to scale the correction term.
Recall, the update of the gradient estimator in our algorithm is given by \eqref{eq:mvr_step}
\begin{align*}
    g^{k+1} &= (1-p)g^k + p\nabla f(x^{k+1};\xi^k)
      + (1-p) \left(\nabla f(x^{k+1};\xi^{k}) - \nabla f(x^{k};\xi^{k})\right) ,
\end{align*}
where $\alpha = 1$ and the last stochastic gradient is computed with the same current data point $\xi^k$.
This modification is motivated by the fact that pure momentum-based variance reduction often behaves poorly in neural-network training, whereas introducing a small correction coefficient can substantially improve stability and empirical performance, as also observed in MARS and related follow-up work.

We tune the stepsize $\gamma$, the batch size $B$, the momentum parameter $p$, and the interpolation parameter $\alpha$ over the grids
\begin{align*}
  \gamma &\in \{2^j : j=-10,\dots,0\},\\
  B &\in \{1,5,10,20,40,60\},\\
  p &\in \{0.001,0.005,0.01,0.05,0.1,0.2\},\\
  \alpha &\in \{0.001,0.005,0.01,0.025\},
\end{align*}
and set the initialization batch size to $B_0 \in \{B,B^2\}$.
Here, $B$ denotes the batch size used to construct the stochastic gradient estimator, and $B_0$ denotes the batch size used for its initialization.

For these neural-network runs, we report the training loss as a function of time.
For visualization, the main figures show the best-performing hyperparameter configurations from the search grid, while Appendix~\ref{add_experiments} provides additional sensitivity plots.
Across the tested parameter ranges, performance is relatively insensitive to the auxiliary parameters $p$ and $\alpha$, which suggests that, in practice, the dominant tuning burden remains the stepsize and batch size, as in \rennala.
\paragraph{Results.}
The neural-network experiments in
\Cref{fig:nn_inexact_sqrt,fig:nn_inexact_mixture,fig:nn_inexact_uniform}
evaluate the $\alpha$-parameterized inexact variant of \rennalamvr introduced in this subsection rather than Algorithm~\ref{algo} exactly.
Although this setting lies outside the scope of our present analysis, it exhibits the same qualitative trend across all three delay models: the inexact variant achieves lower training loss than \rennala under the square-root, mixture, and uniform delay models.
These results suggest that the practical benefits of variance reduction may extend beyond the exact theory-covered setting.
\section{Conclusion}
This work initiates time complexity analysis for variance-reduced methods in heterogeneous parallel stochastic optimization.
Focusing on the \mvr mechanism in \storm \citep{STORM}, and working under the stronger mean-squared smoothness assumption, we proposed \rennalamvr and established its iteration, oracle, and time complexity guarantees.
In particular, we showed that variance reduction can improve not only oracle complexity, but also time complexity in heterogeneous parallel environments, yielding improved guarantees over \rennala \citep{tyurin2023optimal}, the previously time-optimal method in the standard smoothness regime, in relevant parameter regimes.

More broadly, our results highlight a key conceptual message: in parallel stochastic optimization, oracle complexity is not sufficient to understand performance.
Indeed, methods based on momentum-based variance reduction are known to be oracle-optimal in the single-worker setting under stronger smoothness assumptions \citep{STORM,arjevani2022lower}; however, as our results and \Cref{table} show, this does not automatically translate into time optimality in the parallel heterogeneous-worker setting.
Time complexity behavior also depends on how optimization progress interacts with the worker-time model and the batch-size/time tradeoff.
In this sense, a central novelty of the paper lies in the time complexity theory needed to understand this mechanism in the heterogeneous parallel regime.

To support this perspective, we also established a new lower bound on the achievable time complexity under the same model.
Together with our upper and lower bounds, this provides, to the best of our knowledge, the first theoretical understanding of the time complexity behavior of momentum-based variance reduction in heterogeneous distributed systems.
At the same time, a gap between the current upper and lower bounds remains.
Closing this gap appears to be nontrivial, and likely requires fundamentally different ideas rather than a routine extension of existing variance-reduction analyses.
\begin{ack}
    The research reported in this publication was supported by funding from King Abdullah University of Science and Technology (KAUST): i) KAUST Baseline Research Scheme, ii) CRG Grant ORFS-CRG12-2024-6460, and iii) Center of Excellence for Generative AI, under award number 5940.
\end{ack}
{
\small
\bibliographystyle{plainnat}
\bibliography{bib}
}
\newpage
\appendix
\tableofcontents
\section{Time Complexity Under Arbitrarily Varying Worker Speeds}\label{sec:arbitrary_time}
The fixed-computation model in \Cref{ass:fixed_time} assumes that every worker processes gradients at a constant rate.
This assumption is often too restrictive in practice: worker speeds may change over time due to interruptions, network interference, or temporary hardware slowdowns.
To capture such effects, we now replace the constants $\{\tau_i\}_{i=1}^n$ with time-varying computation rates.

\subsection{Universal Computation Model}
\label{sec:universal_computation_model}
We adopt the universal computation model of \citet{tyurin2025tight}.
\begin{assumption}[Universal Computation Model]
  \label{assump:universal_time}
  For each worker $i \in [n]$, there exists a nonnegative function
  \begin{equation*}
    p_i : \R_{+} \to \R_{+}
  \end{equation*}
  that is continuous almost everywhere.
  For any $0 \le T_1 \le T_2$, the number of stochastic gradients completed by worker $i$ during the interval $[T_1,T_2]$ is
  \begin{equation*}
    N_i(T_1,T_2) \coloneqq \left\lfloor \int_{T_1}^{T_2} p_i(s)\, ds \right\rfloor .
  \end{equation*}
\end{assumption}
Moreover, \Cref{assump:universal_time} contains \Cref{ass:fixed_time} as a special case: if $p_i(s)=1/\tau_i$ for all $s \ge 0$, then
\begin{equation*}
  N_i(T_1,T_2) = \left\lfloor \frac{T_2-T_1}{\tau_i} \right\rfloor .
\end{equation*}
\subsection{Time Complexity in the Universal Model}
Unlike the fixed-time model, the universal model does not generally lead to a closed-form expression for the total time.
The natural guarantee is therefore implicit and expressed through the times at which enough gradients have been completed to finish the initialization and each subsequent iteration.
For the universal-model upper bound, we assume that at each iteration boundary all workers are effectively reset to the idle state, so unfinished computations from the previous iteration do not contribute to the next one.
\begin{theorem}[Time Complexity Under the Universal Computation Model]
  Assume the setup and parameter choices of \Cref{thm:iteration_complexity}, and define
  \begin{equation*}
    K_\varepsilon \coloneqq \left\lceil \frac{24\Delta\bar L}{\varepsilon}
    + \frac{\sigma}{\sqrt{\varepsilon}} \right\rceil .
  \end{equation*}
  Let
  \begin{align*}
    B_0 = \left\lceil \frac{6\sigma^2}{\varepsilon}\right\rceil,
    \quad
    B = \left\lceil \frac{6\sigma}{\sqrt{\varepsilon}}\right\rceil,
  \end{align*}
  and define the completion times $\{T_{\mathrm{MVR}}^k\}_{k\ge 0}$ by
  \begin{align*}
    T_{\mathrm{MVR}}^0
    &\coloneqq
    \min\left\{ t \ge 0 : \sum_{i=1}^{n} \left\lfloor \int_0^t p_i(s)\, ds \right\rfloor \ge B_0 \right\}, \\
    T_{\mathrm{MVR}}^k
    &\coloneqq
    \min\left\{ t \ge T_{\mathrm{MVR}}^{k-1} :
    \sum_{i=1}^{n}\left\lfloor \frac{1}{2}\int_{T_{\mathrm{MVR}}^{k-1}}^{t} p_i(s)\, ds \right\rfloor \ge B \right\},
    \qquad k \ge 1 .
  \end{align*}
  Then \rennalamvr (\Cref{algo}) returns an $\varepsilon$--stationary point within at most $T_{\mathrm{MVR}}^{K_\varepsilon}$ seconds.
\end{theorem}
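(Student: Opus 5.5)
The argument separates the iteration count, which \Cref{thm:iteration_complexity} already supplies, from the timing. \Cref{thm:iteration_complexity} states that with $\gamma=\nicefrac{1}{4\bar L}$, $p=\nicefrac{\sqrt\varepsilon}{\sigma}$, and the stated $B,B_0$, running $K_\varepsilon$ iterations gives $\frac1{K_\varepsilon}\sum_{k<K_\varepsilon}\E{\sqnorm{\nabla f(x^k)}}\le\varepsilon$. Since $K_\varepsilon\ge \frac{24\Delta\bar L}{\varepsilon}+\frac{\sigma}{\sqrt\varepsilon}$, outputting an iterate chosen uniformly from $\{x^0,\dots,x^{K_\varepsilon-1}\}$ yields an $\varepsilon$--stationary point. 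That convergence statement does not depend on timing: in the analysis, $g^{k+1}$ is a function of $g^k$ and $B$ i.i.d.\ gradient pairs evaluated at $(x^k,x^{k+1})$. It therefore remains only to show that the algorithm has completed initialization and $K_\varepsilon$ iterations by time $T^{K_\varepsilon}_{\mathrm{MVR}}$.

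The proof then proceeds by induction on $k$, with the claim that the $k$-th estimator $g^k$ is formed no later than $T^k_{\mathrm{MVR}}$. For the base case, initialization uses single gradients at $x^0$, and all workers start idle at time $0$. By \Cref{assump:universal_time}, worker $i$ has completed $\lfloor\int_0^t p_i\rfloor$ gradients by time $t$. By the definition of $T^0_{\mathrm{MVR}}$, the total number of completed gradients reaches $B_0$ by that time, so $g^0$ is ready by $T^0_{\mathrm{MVR}}$.

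For the inductive step, suppose iteration $k-1$ starts at some time $S\le T^{k-1}_{\mathrm{MVR}}$, when $x^k$ is broadcast. Under the reset convention stated before the theorem, every worker starts fresh at $S$. Each arrival is a gradient pair, which costs two unit gradient computations. Hence on $[S,t]$ worker $i$ delivers at least $\lfloor\frac12\int_S^t p_i\rfloor$ pairs; this is the step that needs care. Two sequential gradient computations consume $2$ units of $\int p_i$. The floor of half the integral counts completed pairs correctly, because a pair is finished exactly when the accumulated work crosses an even integer.

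The remaining point is monotonicity. Since $S\le T^{k-1}_{\mathrm{MVR}}$ and $p_i\ge0$, we have $\int_S^t p_i\ge\int_{T^{k-1}_{\mathrm{MVR}}}^t p_i$. So by $t=T^k_{\mathrm{MVR}}$ at least $B$ pairs have arrived, and $g^k$ is formed by then. The server-side operations (the update, the broadcast, and the averaging) take zero time under the instantaneous-communication assumption. Starting earlier never hurts, because the floor of a nondecreasing integral is nondecreasing.

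Combining the two parts, all $K_\varepsilon$ iterations, and hence the chosen iterate, are available by $T^{K_\varepsilon}_{\mathrm{MVR}}$. The main obstacle is the bookkeeping in the inductive step. It requires (i) justifying that pairs are computed sequentially, so that the factor $\frac12$ inside the floor is exactly right, and (ii) arguing carefully that the reset convention plus monotonicity let us replace the actual start time by $T^{k-1}_{\mathrm{MVR}}$. Everything else is inherited directly from \Cref{thm:iteration_complexity}.
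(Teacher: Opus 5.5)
Your proposal is correct and follows essentially the same route as the paper. You invoke \Cref{thm:iteration_complexity} for the iteration count, bound the initialization by $T_{\mathrm{MVR}}^0$, and then argue inductively under the reset convention that each iteration collects $B$ gradient pairs (each costing two gradient computations, hence the factor $\tfrac12$ inside the floor) by time $T_{\mathrm{MVR}}^{k}$; your monotonicity step, which lets the actual start time be any $S\le T_{\mathrm{MVR}}^{k-1}$ rather than assuming it equals $T_{\mathrm{MVR}}^{k-1}$ as the paper does, is a small but welcome tightening of the same induction.
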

\begin{proof}
  By \Cref{thm:iteration_complexity}, under the stated parameter choice, \rennalamvr reaches an $\varepsilon$--stationary point after at most $K_\varepsilon$ iterations.
  It therefore remains to upper bound the time needed to complete the initialization and these $K_\varepsilon$ iterations.

  By definition of $T_{\mathrm{MVR}}^0$, by time $T_{\mathrm{MVR}}^0$ the workers have collectively produced at least $B_0$ stochastic gradients at $x^0$.
  Hence the initialization step used to construct $g^0$ is completed no later than $T_{\mathrm{MVR}}^0$.

  Now fix $k \ge 1$ and suppose that iteration $k$ starts at time $T_{\mathrm{MVR}}^{k-1}$.
  Over the interval $[T_{\mathrm{MVR}}^{k-1}, t]$, worker $i$ can complete
  \begin{equation*}
    \left\lfloor \int_{T_{\mathrm{MVR}}^{k-1}}^{t} p_i(s)\, ds \right\rfloor
  \end{equation*}
  stochastic gradients.
  One arrival in \rennalamvr at iteration $k$ is a complete pair
  \begin{equation*}
    \left(\nabla f(x^{k-1};\xi), \nabla f(x^{k};\xi)\right),
  \end{equation*}
  and therefore requires two stochastic-gradient computations on the same worker.
  Consequently, by time $t$, worker $i$ can contribute at least
  \begin{equation*}
    \left\lfloor \frac{1}{2}\int_{T_{\mathrm{MVR}}^{k-1}}^{t} p_i(s)\, ds \right\rfloor
  \end{equation*}
  complete gradient pairs.
  Summing over workers, the server has received at least
  \begin{equation*}
    \sum_{i=1}^{n}\left\lfloor \frac{1}{2}\int_{T_{\mathrm{MVR}}^{k-1}}^{t} p_i(s)\, ds \right\rfloor
  \end{equation*}
  pairs by time $t$.
  By the definition of $T_{\mathrm{MVR}}^k$, this quantity is at least $B$ when $t=T_{\mathrm{MVR}}^k$.
  Hence iteration $k$ finishes no later than $T_{\mathrm{MVR}}^k$.

  Applying the above argument inductively for $k=1,\dots,K_\varepsilon$, we conclude that all $K_\varepsilon$ iterations finish by time $T_{\mathrm{MVR}}^{K_\varepsilon}$.
  Combining this with \Cref{thm:iteration_complexity} proves the claim.
\end{proof}
\paragraph{Comparison with \rennalatitle.}
The corresponding universal-model bound for \rennala follows from \citet[Theorem~5.3]{tyurin2025tight}, specialized to our notation.
In the regime $\varepsilon < \sigma^2$ considered in \Cref{thm:iteration_complexity}, \rennala reaches an $\varepsilon$--stationary point within $T_{\mathrm{SGD}}^{K_{\mathrm{SGD}}}$ seconds, where
\begin{equation*}
  K_{\mathrm{SGD}} \coloneqq \left\lceil \frac{24L\Delta}{\varepsilon} \right\rceil
\end{equation*}
and
\begin{align*}
  T_{\mathrm{SGD}}^0
  &\coloneqq 0, \\
  T_{\mathrm{SGD}}^k
  &\coloneqq
  \min\left\{ t \ge T_{\mathrm{SGD}}^{k-1} :
  \sum_{i=1}^{n}\left\lfloor \int_{T_{\mathrm{SGD}}^{k-1}}^{t} p_i(s)\, ds \right\rfloor
  \ge \left\lceil \frac{\sigma^2}{\varepsilon} \right\rceil \right\},
  \qquad k \ge 1 .
\end{align*}
Thus, the universal model preserves the same qualitative trade-off as the fixed-time model.
Each iteration of \rennalamvr waits for $B=\Theta(\sigma/\sqrt{\varepsilon})$ complete pairs, whereas each iteration of \rennala waits for $\Theta(\sigma^2/\varepsilon)$ stochastic gradients.
For sufficiently small $\varepsilon$, the number of stochastic gradients that must be completed before each \rennalamvr update is therefore asymptotically smaller.
The trade-off is that \rennalamvr requires $K_\varepsilon=\Theta(\bar L\Delta/\varepsilon+\sigma/\sqrt{\varepsilon})$ update rounds, whereas \rennala requires $K_{\mathrm{SGD}}=\Theta(L\Delta/\varepsilon)$ rounds.
Since the universal-model bounds are implicit, no unconditional ordering between the two total times follows without additional structure on the rate functions $\{p_i\}_{i=1}^n$.
Nevertheless, whenever $\bar L=\mathrm{O}(L)$ and the reduction in per-round waiting time dominates, the same mechanism that improves the fixed-time bound can also favor \rennalamvr under time-varying worker speeds.

\section{Proof of Lemmas}
This appendix provides complete proofs of all lemmas referenced in the main text, including both those stated explicitly and auxiliary results needed for the theorem proofs.
\subsection{Proof of \Cref{lem:l_smooth}}\label{proof:l_smooth}
Let us first restate the lemma.
\begin{restate-lemma}{\ref{lem:l_smooth}}
    Mean-squared smoothness (\cref{ass:ms_smoothness}) implies smoothness of $f$ with constant $\bar{L}$, i.e.
    \begin{align*}
        \norm{\nabla f(x) - \nabla f(y)} 
        \leq \bar{L} \norm{x - y}, 
        \quad \forall\, x,y \in \R^d ~.
    \end{align*}
\end{restate-lemma}
\begin{proof}
  Using Jensen's inequality, we have
    \begin{align*}
        \sqnorm{ \nabla f(x) - \nabla f(y) }
        = \sqnorm{ \E{\nabla f(x;\xi) - \nabla f(y;\xi)} } \\
        \leq \E{ \sqnorm{ \nabla f(x;\xi) - \nabla f(y;\xi) } } 
        \leq \bar{L}^2 \sqnorm{ x - y } ~.
    \end{align*}
    Hence, taking square roots,
    \begin{align*}
        \|\nabla f(x) - \nabla f(y)\| \leq \bar{L} \|x - y\| ~.
    \end{align*}
\end{proof}
\subsection{Proof of \Cref{lem:expected_similarity}}
\label{proof:expected_similarity}
Again restating the lemma first.
\begin{restate-lemma}{\ref{lem:expected_similarity}}
  Mean-squared smoothness (\Cref{ass:ms_smoothness}) implies expected similarity with the same constant $\bar{L}$:
  \begin{equation*}
    \E{ \sqnorm{\nabla f(x;\xi) - \nabla f(y;\xi) - (\nabla f(x) - \nabla f(y))} }
    \leq \bar{L}^2 \sqnorm{x-y}, \quad \forall\, x,y \in \R^d .
  \end{equation*}
\end{restate-lemma}
\begin{proof}
  This follows by doing some simple algebra
    \begin{align*}
      &\E{\|\nabla f(x;\xi) - \nabla f(y;\xi) - (\nabla f(x) - \nabla f(y))\|^2}\\
      &\quad = \E{\|\nabla f(x;\xi) - \nabla f(y;\xi)\|^2}
        - \|\nabla f(x) - \nabla f(y)\|^2\\
      &\quad \leq \E{\|\nabla f(x;\xi) - \nabla f(y;\xi)\|^2}
      \leq \bar{L}^2 \|x - y\|^2 .
    \end{align*}
\end{proof}
\subsection{Expected Similarity and Bounded Variance for the Minibatch Case}
We now prove another lemma that will be used in the iteration complexity analysis.
\begin{lemma}\label{lem:mb-es-var}
  The following bounds hold:
  \begin{align*}
    \E{ \sqnorm{ \frac{1}{B}\sum_{j=1}^{B}\nabla f(x^{k+1}; \xi^{k,j}) - \nabla f(x^{k+1}) + \nabla f(x^k) - \frac{1}{B}\sum_{j=1}^{B}\nabla f(x^{k}; \xi^{k,j}) } } \\
    \leq \frac{\bar{L}^2}{B}\sqnorm{ x^{k+1}-x^k } ~,
  \end{align*}
  and 
  \begin{align*}
    \E{ \sqnorm{ \frac{1}{B}\sum_{j=1}^{B}\nabla f(x^{k}; \xi^{k,j}) - \nabla f(x^{k}) } }
    \leq \frac{\sigma^2}{B} ~.
  \end{align*}
\end{lemma}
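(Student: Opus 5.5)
Both bounds follow from the same variance-of-an-average argument: write each quantity as an average of $B$ independent zero-mean random vectors, conditioned on the history up to $x^{k+1}$, and then apply the single-sample bounds from \Cref{lem:expected_similarity} and \Cref{ass:unbiased_bounded_variance}.

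The first step is to fix the conditioning. Let $\mathcal{F}_k$ denote the $\sigma$-algebra generated by all randomness up to the formation of $x^{k+1}$. Since $x^{k+1}=x^k-\gamma g^k$, both $x^k$ and $x^{k+1}$ are $\mathcal{F}_k$-measurable. In \Cref{algo}, the samples $\xi^{k,1},\dots,\xi^{k,B}$ behind the $B$ arriving gradient pairs of iteration $k$ are i.i.d.\ from $\cD$ and independent of $\mathcal{F}_k$. The arrival order is determined by the deterministic times $\tau_i$, so it does not bias the samples.

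Next, for the first inequality, define for each $j$
\[
  Z_j \coloneqq \nabla f(x^{k+1};\xi^{k,j})-\nabla f(x^k;\xi^{k,j})-\bigl(\nabla f(x^{k+1})-\nabla f(x^k)\bigr).
\]
The quantity inside the norm is exactly $\frac1B\sum_j Z_j$. By unbiasedness (\Cref{ass:unbiased_bounded_variance}), $\E{Z_j\mid\mathcal{F}_k}=0$. The $Z_j$ are conditionally independent, so every cross term $\E{\langle Z_j,Z_{j'}\rangle\mid\mathcal{F}_k}$ with $j\ne j'$ vanishes. This gives
\[
  \E{\sqnorm{\tfrac1B\textstyle\sum_j Z_j}\mid\mathcal{F}_k}=\tfrac{1}{B^2}\textstyle\sum_j\E{\sqnorm{Z_j}\mid\mathcal{F}_k}.
\]
Applying \Cref{lem:expected_similarity} with $x=x^{k+1}$, $y=x^k$ bounds each summand by $\bar L^2\sqnorm{x^{k+1}-x^k}$, so the right-hand side is at most $\frac{\bar L^2}{B}\sqnorm{x^{k+1}-x^k}$. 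Taking the total expectation yields the first claim, read with $\sqnorm{x^{k+1}-x^k}$ inside the expectation on the right (or conditionally, as the statement implicitly intends).

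The second inequality uses the same computation with $Y_j\coloneqq\nabla f(x^k;\xi^{k,j})-\nabla f(x^k)$. Again $\E{Y_j\mid\mathcal{F}_k}=0$ and the cross terms vanish, and the variance bound in \Cref{ass:unbiased_bounded_variance} gives $\E{\sqnorm{Y_j}\mid\mathcal{F}_k}\le\sigma^2$. Summing $B$ terms and dividing by $B^2$ gives $\sigma^2/B$.

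The only delicate point is justifying the conditional independence of the batch samples from $\mathcal{F}_k$ in the asynchronous protocol. Each gradient pair used in iteration $k$ is computed from a fresh sample drawn after $x^{k+1}$ is broadcast, and the selection of which pairs arrive first depends only on the deterministic compute times, not on the sample values. Once this is stated, the rest is a routine calculation.
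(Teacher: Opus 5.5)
Your proposal is correct and follows essentially the same argument as the paper. Both write each quantity as an average of $B$ independent zero-mean terms, use independence of the $\xi^{k,j}$ to drop the cross terms, and bound each summand by \Cref{lem:expected_similarity} (respectively \Cref{ass:unbiased_bounded_variance}). Your explicit conditioning on $\mathcal{F}_k$, and your reading of the right-hand side as a conditional bound, make precise what the paper leaves implicit; this matches how the lemma is actually used in the proof of \Cref{thm:iteration_complexity}.
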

\begin{proof}
  Using \Cref{lem:expected_similarity} and independence of $\xi^{k,j}$:
  \begin{align*}
      & \E{\sqnorm{ \frac{1}{B}\sum_{j=1}^{B}\nabla f(x^{k+1}; \xi^{k,j}) - \nabla f(x^{k+1}) + \nabla f(x^k) - \frac{1}{B}\sum_{j=1}^{B}\nabla f(x^{k}; \xi^{k,j}) }} \\
      &\quad= \frac{1}{B^2} \sum_{j=1}^{B} E\left[\left\|\nabla f(x^{k+1}; \xi^{k,j}) - \nabla f(x^{k+1}) + \nabla f(x^k) - \nabla f(x^{k}; \xi^{k,j})\right\|^2\right] \\
      &\quad\leq \frac{1}{B^2} \sum_{j=1}^{B} \bar{L}^2 \sqnorm{ x^{k+1}-x^k } = \frac{\bar{L}^2}{B} \sqnorm{ x^{k+1}-x^k } ~.
  \end{align*}
  Similarly, using \Cref{ass:unbiased_bounded_variance} and independence:
  \begin{align*}
      \E{ \sqnorm{ \frac{1}{B}\sum_{j=1}^{B}\nabla f(x^{k}; \xi^{k,j}) - \nabla f(x^{k}) } }
      &= \frac{1}{B^2} \sum_{j=1}^{B} \E{ \sqnorm{ \nabla f(x^{k}; \xi^{k,j}) - \nabla f(x^{k}) } } \\
      &\leq \frac{1}{B^2} \sum_{j=1}^{B} \sigma^2 = \frac{\sigma^2}{B} ~.
  \end{align*}
\end{proof}
\section{Why the Current Analysis Cannot Close the Time-Complexity Gap}
\label{sec:why}
Our time lower bound (cf.\ \Cref{thm:lower_bound}) implies, up to universal constants, a requirement of the form
\begin{equation}\label{eq:lb_schematic}
  m_{\mathrm{time}}(B,\Delta,\bar L,\sigma^2)
  \;\ge\;
  \Omega \left(
    \left( \frac{\bar L\Delta}{\sigma\sqrt{\varepsilon}} + 1 \right) 
    T \left(\frac{\sigma^2}{\varepsilon}\right)
  \right),
\end{equation}
where
$$
  T(B) = \min_{m\in[n]}\left( \sum_{i=1}^m\tau_i^{-1} \right)^{-1}(B + m)
$$
is an upper bound on the time needed to obtain $B$ stochastic gradients asynchronously.
On the other hand, the time upper bound we obtain in this paper for \rennalamvr has the schematic structure
\begin{equation}\label{eq:ub_round_based}
  T_{\mathrm{up}}
  \lesssim
  T(B_0) + K\,T(B),
\end{equation}
where \(B_0\) is the initialization batch size, \(B\) is the per-iteration batch size, and \(K\) is the number of server updates (iterations).
If one aims to align \eqref{eq:ub_round_based} with the lower-bound scaling \eqref{eq:lb_schematic} \emph{uniformly over heterogeneous systems} (within our timing model), then the structure of \(T(\cdot)\) can force a specific scaling of \((B,K)\); a simple way to see this is to consider the homogeneous case \(\tau_i\equiv\tau\), for which
\begin{align*}
  T(B) = \min_{m\in[n]}\frac{\tau(B + m)}{m} = \frac{\tau(B + n)}{n} = \frac{\tau}{n}B + \tau ~.
\end{align*}
In particular, even in this ``linear'' regime, \(T(B)\) has a nonzero per-round overhead \(\tau\), and hence
$$
  K\,T(B)=\frac{\tau}{n}KB + \tau K,
$$
while
\begin{align*}
  \left(\frac{\bar L\Delta}{\sigma\sqrt{\varepsilon}}+1\right)\,
  T \left(\frac{\sigma^2}{\varepsilon}\right)
  =
  \left( \frac{\bar L\Delta}{\sigma\sqrt{\varepsilon}} + 1 \right)\left( \frac{\tau}{n}\frac{\sigma^2}{\varepsilon}+\tau \right)
  =
  \frac{\tau}{n}\left(\frac{\bar L\Delta}{\sigma\sqrt{\varepsilon}}+1\right)\frac{\sigma^2}{\varepsilon}
  +\tau\left( \frac{\bar L\Delta}{\sigma\sqrt{\varepsilon}} + 1 \right).
\end{align*}
Thus, matching the lower-bound scaling in this example forces the additive contributions to be of the same order, which yields
\begin{equation}\label{eq:needed_BK}
  K \asymp \frac{\bar L\Delta}{\sigma\sqrt{\varepsilon}} + 1 ~,
\end{equation}
and then matching the leading linear-in-\(B\) terms yields
$
  B \asymp \frac{\sigma^2}{\varepsilon}
$
(with $B_0$ of the same order as well).
However, our iteration-complexity bound \eqref{eq:main_bound} shows that \eqref{eq:needed_BK} cannot be achieved by \rennalamvr under the stepsize restriction \eqref{eq:gamma_choice} used in our analysis.
Let us enforce the batching suggested by the lower bound and set
$
  B_0=B=c\,\frac{\sigma^2}{\varepsilon}
$
for a sufficiently large constant \(c\).
Then \eqref{eq:main_bound} gives
\begin{align*}
  \frac{1}{K}\sum_{k=0}^{K-1}\E{\sqnorm{ \nabla f(x^k) } }
  \le
  \frac{2\Delta}{\gamma K}
    + \frac{2p\sigma^2}{B}
    + \frac{1}{pK}\frac{2\sigma^2}{B_0} ~.
\end{align*}
With \( B=B_0= \nicefrac{c \sigma^2}{\varepsilon} \), the noise terms become
$$
  \frac{2p\sigma^2}{B} = \frac{2p}{c}\,\varepsilon
$$
and
$$
  \frac{1}{pK}\frac{2\sigma^2}{B_0} = \frac{2}{cpK}\,\varepsilon ~.
$$
By choosing \(c\) large enough (and, say, any constant \(p\le 1\)), these two terms can be made \(\le \varepsilon/3\) provided \(K\gtrsim 1/p\).
The remaining requirement comes from the descent term \(\frac{2\Delta}{\gamma K}\).
Crucially, the stepsize constraint \eqref{eq:gamma_choice} enforces \(\gamma = \mathcal{O}(1/\bar L)\) even when \(B\) is as large as \(\sigma^2/\varepsilon\) (the factor \((1-p)/\sqrt{pB}\) only makes \(\gamma\) \emph{smaller}).
Therefore, to make
$$
  \frac{2\Delta}{\gamma K}\le \frac{\varepsilon}{3} ~,
$$
we must have
\begin{equation}\label{eq:K_barLDelta_over_eps}
  K \ge \Omega\!\left(\frac{\bar L\Delta}{\varepsilon}\right).
\end{equation}
In the high-noise regime \(\sigma>\sqrt{\varepsilon}\), the lower-bound-implied target
$$
  K \asymp \frac{\bar L\Delta}{ \sigma\sqrt{\varepsilon} }
$$
is \emph{smaller} than $\nicefrac{\bar L\Delta}{\varepsilon}$ by a factor $\nicefrac{\sigma}{\sqrt{\varepsilon}}$.
Hence, \eqref{eq:K_barLDelta_over_eps} rules out the simultaneous choice \eqref{eq:needed_BK} within our current upper-bound analysis.
In summary, while the lower bound \eqref{eq:lb_schematic} suggests that optimal time would be attained by producing gradients in batches of size \(\sigma^2/\varepsilon\) and performing only
$$
  \Theta \left( \bar L\Delta/(\sigma\sqrt{\varepsilon}) \right)
$$
update rounds, the smoothness-limited descent mechanism behind \eqref{eq:main_bound} forces $\Omega(\nicefrac{\bar L\Delta}{\varepsilon})$ update rounds (under \eqref{eq:gamma_choice}) regardless of how large we take $B$.
This creates a gap relative to \eqref{eq:lb_schematic} for the particular \rennalamvr analysis developed in this paper, which explains why our time upper bound does not match the lower bound in full generality.
Although one can argue that the remaining gap may be due to our proof technique, we believe the bounds are reasonably tight: in particular, when we ignore the timing model and count only oracle queries, our analysis recovers the optimal oracle complexity of \mvr.
\section{Proofs of Upper Bounds}
In this section, we prove the upper bounds on the iteration and time complexity.
\subsection{Proof of \Cref{thm:iteration_complexity}}\label{proof:iteration_complexity}
We start with the iteration complexity.
\begin{restate-theorem}{\ref{thm:iteration_complexity}}[Iteration Complexity of \Cref{algo}]
  Let Assumptions~\ref{ass:lower_bound},~\ref{ass:unbiased_bounded_variance}, and~\ref{ass:ms_smoothness} hold.
  Fix parameters $p\in(0,1]$, integers $B_0\ge 1$ and $B\ge 1$.
  Consider \Cref{algo} with the modification that the initialization $g^0$ uses an unbiased estimate of $\nabla f(x^0)$ using a minibatch of size $B_0$, while for all $k\ge 0$ the iteration-$k$ minibatch has size $B$.
  Choose the stepsize
  \begin{equation}\label{eq:gamma_choice}
    \gamma \le \frac{1}{2\bar{L}\left( 1 + \frac{1-p}{\sqrt{pB}}\right) } ~.
  \end{equation}
  Then for any $K\ge 1$,
  \begin{equation}\label{eq:main_bound}
    \frac{1}{K} \sum_{k=0}^{K-1} \E{ \sqnorm{ \nabla f(x^k)} }
    \le
    \frac{2 \Delta}{\gamma K} + \frac{2 p \sigma^2}{B} + \frac{1}{p K}\frac{2\sigma^2}{B_0},
  \end{equation}
  where $\Delta \coloneqq f(x^0)-f^*$.

  Fix $\varepsilon>0$ and assume $\varepsilon < \sigma^2$ and $\varepsilon < 2\bar{L}\Delta$.
  Choose
  \begin{align}
    \gamma = \frac{1}{4 \bar L},
    \quad p \eqdef \frac{\sqrt{\varepsilon}}{\sigma}, 
    \quad B_0 \eqdef \left\lceil \frac{6\sigma^2}{\varepsilon}\right\rceil, 
    \quad B \eqdef\; \left\lceil \frac{6p\sigma^2}{\varepsilon}\right\rceil= \left\lceil \frac{6\sigma}{\sqrt{\varepsilon}}\right\rceil,
  \end{align}
  then the iterates of \Cref{algo} satisfy
  \begin{align*}
    \frac{1}{K} \sum_{k=0}^{K-1} \E{ \sqnorm{ \nabla f(x^k)} }
    \ \le\ \varepsilon ~,
  \end{align*}
  for
  \begin{align*}
    K \ge \frac{24\Delta\bar L}{\varepsilon}
    + \frac{\sigma}{\sqrt{\varepsilon}} ~.
  \end{align*}
\end{restate-theorem}
\begin{proof}
  Since \Cref{ass:ms_smoothness} implies that $f$ is $\bar L$--smooth (see \Cref{lem:l_smooth}), the standard smoothness inequality (e.g., \citep{nesterov2018lectures}) gives
  \begin{align*}
    f(x^{k+1}) 
    &\leq f(x^k) + \langle \nabla f(x^k), x^{k+1} - x^k \rangle + \frac{\bar{L}}{2} \sqnorm{x^{k+1} - x^k} \\
    &= f(x^k) + \langle \nabla f(x^k), -\gamma g^k \rangle +\frac{\bar L}{2} \sqnorm{x^{k+1} - x^k} \\
    &= f(x^k) + \frac{\gamma}{2} \sqnorm{g^k - \nabla f(x^k)} - \frac{\gamma}{2} \sqnorm{\nabla f(x^k)}
      - \frac{1}{2\gamma}\sqnorm{x^{k+1} - x^k} + \frac{\bar L}{2} \sqnorm{x^{k+1} - x^k} \\
    &= f(x^k) - \frac{\gamma}{2} \sqnorm{\nabla f(x^k)}
      - \left(\frac{1}{2\gamma}-\frac{\bar{L}}{2}\right) \sqnorm{x^{k+1}-x^k}
      + \frac{\gamma}{2} \sqnorm{g^k- \nabla f(x^k)} .
  \end{align*}
  Subtract $f^*$ and take $\ExpCond{\cdot}{\cF_k}$, where $\cF_k \coloneqq \sigma(x^0, \ldots, x^k, g^0, \ldots, g^k)$:
  \begin{align*}
    \E{f(x^{k+1}) - f^* \mid \cF_k}
    &\leq f(x^k) - f^* - \frac{\gamma}{2} \sqnorm{ \nabla f(x^k) } \\
      &\quad - \left(\frac{1}{2\gamma}-\frac{\bar{L}}{2}\right) \sqnorm{ x^{k+1} - x^k }
      + \frac{\gamma}{2} \sqnorm{ g^k- \nabla f(x^k) } .
  \end{align*}
  We need to control the variance term above.
  So, let us study $\ExpCond{\sqnorm{ g^{k+1} - \nabla f(x^{k+1}) }}{\cF_k}$.
  Using bias--variance decomposition:
  \begin{align*}
    &\ExpCond{\sqnorm{ g^{k+1} - \nabla f(x^{k+1}) }}{\cF_k}\\
    &= \ExpCond{\sqnorm{ \frac{1}{B}\sum_{j=1}^{B}\nabla f(x^{k+1}; \xi^{k,j}) - \nabla f(x^{k+1})  + (1-p)\left(g^k  - \frac{1}{B}\sum_{j=1}^{B}\nabla f(x^k; \xi^{k,j})\right) }}{\cF_k}\\
    &= (1-p)^2 \sqnorm{ g^k - \nabla f(x^k) } \\
      &\quad + \ExpCond{\sqnorm{\frac{1}{B}\sum_{j=1}^{B}\nabla f(x^{k+1}; \xi^{k,j}) - \nabla f(x^{k+1})  + (1-p)\left(\nabla f(x^k) - \frac{1}{B}\sum_{j=1}^{B}\nabla f(x^k; \xi^{k,j})\right) }}{\cF_k}\\
    &= (1-p)^2 \sqnorm{ g^k - \nabla f(x^k) } \\
      &\quad + \mathbb{E} \Bigg[ \Bigg\|p\Bigg(\frac{1}{B}\sum_{j=1}^{B}\nabla f(x^{k+1}; \xi^{k,j}) - \nabla f(x^{k+1})\Bigg)\\
      &\quad\quad + (1-p)\Bigg(\frac{1}{B}\sum_{j=1}^{B}\nabla f(x^{k+1}; \xi^{k,j}) - \nabla f(x^{k+1}) + \nabla f(x^k) - \frac{1}{B}\sum_{j=1}^{B}\nabla f(x^k; \xi^{k,j})\Bigg) \Bigg\|^2 \ \Bigg\vert\ \cF_k \Bigg]\\
    &\leq (1-p)^2 \sqnorm{ g^k - \nabla f(x^k) }
      + 2 p^2 \ExpCond{\sqnorm{\frac{1}{B}\sum_{j=1}^{B}\nabla f(x^{k+1}; \xi^{k,j}) - \nabla f(x^{k+1})}}{\cF_k} \\
      &\quad + 2(1-p)^2 \ExpCond{ \sqnorm{\frac{1}{B}\sum_{j=1}^{B}\nabla f(x^{k+1}; \xi^{k,j}) - \nabla f(x^{k+1}) + \nabla f(x^k) - \frac{1}{B}\sum_{j=1}^{B}\nabla f(x^k; \xi^{k,j})} }{\cF_k} .
  \end{align*}
  Using \cref{lem:mb-es-var}, we get
  \begin{equation*}
    \ExpCond{\sqnorm{ g^{k+1} - \nabla f(x^{k+1}) }}{\cF_k}
    \leq (1-p)^2 \sqnorm{ g^k - \nabla f(x^k) }
    + \frac{2 p^2 \sigma^2}{B}
    + \frac{2(1-p)^2 \bar{L}^2}{B} \sqnorm{ x^{k+1}-x^k } .
  \end{equation*}
  Define the Lyapunov function:
  \begin{equation*}
    \Psi_k = f(x^k) - f^* + \frac{\gamma}{2p} \sqnorm{ g^k - \nabla f(x^k) } .
  \end{equation*}
  We have:
  \begin{align*}
    \ExpCond{\Psi_{k+1}}{\cF_k}
    &\leq f(x^k) - f^* - \frac{\gamma}{2} \sqnorm{ \nabla f(x^k) }
    - \left(\frac{1}{2\gamma}-\frac{\bar{L}}{2}\right) \sqnorm{ x^{k+1} - x^k }
    + \frac{\gamma}{2} \sqnorm{ g^k- \nabla f(x^k) } \\
      &\quad + \frac{\gamma}{2p} \left((1-p)^2 \sqnorm{ g^k - \nabla f(x^k) }
      + \frac{2 p^2 \sigma^2}{B}
      + \frac{2(1-p)^2 \bar{L}^2}{B} \sqnorm{ x^{k+1}-x^k }\right) \\
    &= f(x^k) - f^* - \frac{\gamma}{2} \sqnorm{ \nabla f(x^k) } - \left(\frac{1}{2\gamma}-\frac{\bar{L}}{2} - \frac{\gamma(1-p)^2 \bar{L}^2}{p B}\right) \sqnorm{ x^{k+1}-x^k } \\
    &\quad + \frac{\gamma}{2p}\left(p + (1-p)^2\right) \sqnorm{ g^k - \nabla f(x^k) }
    + \frac{\gamma p \sigma^2}{B} ~.
  \end{align*}
  Using $p + (1-p)^2 \le 1$, we get
  \begin{equation*}
    \ExpCond{\Psi_{k+1}}{\cF_k}
    \leq \Psi_k - \frac{\gamma}{2} \sqnorm{ \nabla f(x^k) }
    - \left(\frac{1}{2\gamma}-\frac{\bar{L}}{2} - \frac{\gamma(1-p)^2 \bar{L}^2}{p B}\right) \sqnorm{ x^{k+1}-x^k }
    + \frac{\gamma p \sigma^2}{B} ~.
  \end{equation*}
  Next to ensure that the coefficient before $\sqnorm{ x^{k+1}-x^k }$ is non-negative, it is sufficient to have
  \begin{equation*}
    \frac{1}{4\gamma} \geq \frac{\bar{L}}{2} \quad\text{and}\quad \frac{1}{4\gamma} \geq \frac{\gamma(1-p)^2 \bar{L}^2}{p B} ~,
  \end{equation*}
  which is equivalent to
  \begin{equation*}
    \gamma \leq \frac{1}{2\bar{L}} \quad\text{and}\quad \gamma \leq \frac{1}{2\bar{L} \frac{1-p}{\sqrt{p B}}} ~.
  \end{equation*}
  Hence, taking $\gamma \le \frac{1}{2\bar{L}\left( 1 + \frac{1-p}{\sqrt{pB}}\right) }$ ensures
  $\frac{1}{2\gamma} - \frac{\bar{L}}{2} - \frac{\gamma(1-p)^2\bar{L}^2}{pB} \ge 0$.
  Thus, we obtained
  \begin{align*}
    \ExpCond{\Psi_{k+1}}{\cF_k}
    &\leq \Psi_k - \frac{\gamma}{2} \sqnorm{ \nabla f(x^k) }
    + \frac{\gamma p \sigma^2}{B} ~.
  \end{align*}
  Taking full expectation and telescoping from $k=0$ to $K-1$ yields
  \begin{align*}
    \sum_{k=0}^{K-1} \left(\frac{\gamma}{2}\E{\sqnorm{ \nabla f(x^k) }} 
      - \frac{\gamma p\sigma^2}{B}\right)
    \le \Delta + \frac{\gamma}{2p} \E{\sqnorm{ g^0 - \nabla f(x^0) }} .
  \end{align*}
  Hence,
  \begin{align*}
    \frac{1}{K} \sum_{k=0}^{K-1} \E{\sqnorm{ \nabla f(x^k) }}
    \;\le\; \frac{2 \Delta}{\gamma K} 
      \;+\; \frac{2 p \sigma^2}{B} 
      \;+\; \frac{1}{p K} \frac{2\sigma^2}{B_0} ~.
  \end{align*}
  Fix $\varepsilon > 0$.
  We assume $\varepsilon < \sigma^2$.
  Otherwise, we are in the low-noise regime, hence we can set $p=1$ and use \sgd with minibatch size $B$, which reaches the target accuracy in $\cO\!\left(\frac{\bar{L} \Delta}{\varepsilon}\right)$ stochastic gradients.
  Also assume $\varepsilon < 2\bar{L}\Delta$.
  Otherwise, since $\|\nabla f(x^0)\|^2\leq 2\bar{L}\Delta \leq \varepsilon$, the initial point $x^0$ is already $\varepsilon$--stationary.

  Recall that we have established
  \begin{align*}
    \frac{1}{K} \sum_{k=0}^{K-1} \E{\sqnorm{ \nabla f(x^k) }}
    &\le \frac{2 \Delta }{\gamma K}
      \;+\; \frac{2 p \sigma^2}{B} 
      \;+\; \frac{1}{p K} \frac{2\sigma^2}{B_0} ~.
  \end{align*}
  To make sure the right hand side is bounded by $\varepsilon$, we bound each term on the right-hand side by $\nicefrac{\varepsilon}{3}$.

  We choose the initialization batch size
  \begin{equation}\label{eq:choice_S0}
    B_0 \eqdef \left\lceil \frac{6\sigma^2}{\varepsilon}\right\rceil,
  \end{equation}
  which gives
  \begin{align*}
    \frac{1}{pK}\frac{2\sigma^2}{B_0}
    \;\le\;
    \frac{1}{pK}\cdot \frac{\varepsilon}{3} ~.
  \end{align*}
  To ensure this term is at most $\nicefrac{\varepsilon}{3}$, we require
  \begin{equation}\label{eq:K_ge_1_over_p}
    K \;\ge\; \frac{1}{p} ~.
  \end{equation}

  We set
  \begin{equation}\label{eq:choice_S}
    B \eqdef \left\lceil \frac{6p\sigma^2}{\varepsilon}\right\rceil,
  \end{equation}
  which directly ensures
  \begin{align*}
      \frac{2p\sigma^2}{B} \;\le\; \frac{\varepsilon}{3} ~.
  \end{align*}
  
  We choose
  \begin{equation}\label{eq:choice_p}
    p \eqdef \frac{\sqrt{\varepsilon}}{\sigma} ~,
  \end{equation}
  Substituting \eqref{eq:choice_p} into \eqref{eq:choice_S} yields
  \begin{align}\label{eq:B_simplified}
    B = \left\lceil \frac{6\sigma}{\sqrt{\varepsilon}}\right\rceil.
  \end{align}
  Moreover, this choice of $p$ gives
  \begin{align*}
    \frac{1-p}{\sqrt{pB}}
    &\le \frac{1-p}{\sqrt{p\cdot 6p\sigma^2/\varepsilon}}
    = \frac{1}{\sqrt{6}}\cdot \frac{1-p}{p}\cdot \frac{\sqrt{\varepsilon}}{\sigma}\\
    &= \frac{1}{\sqrt{6}}(1-p) 
    \;\le\; \frac{1}{\sqrt{6}} ~.
  \end{align*}

  The above bound ensures that the step size
  \begin{equation}\label{eq:choice_gamma}
    \gamma \eqdef \frac{1}{4\bar{L}}
  \end{equation}
  satisfies the required condition, since
  \begin{align*}
    2\bar{L}\!\left(1+ \frac{1-p}{\sqrt{pB}}\right) 
    \;\le\; 2\bar{L}\!\left(1+ \frac{1}{\sqrt{6}}\right)
    \;<\; 4\bar{L} ~.
  \end{align*}

  With $\gamma = \nicefrac{1}{4\bar{L}}$, bounding the first term by $\nicefrac{\varepsilon}{3}$ requires
  \begin{equation}\label{eq:K_ge_main}
    K \;\ge\; \frac{24\Delta\bar{L}}{\varepsilon} ~.
  \end{equation}
  Combining with \eqref{eq:K_ge_1_over_p} and recalling $p = \nicefrac{\sqrt{\varepsilon}}{\sigma}$, we take
  \begin{equation}\label{eq:choice_K}
    K \;\ge\; \frac{24\Delta\bar{L}}{\varepsilon}
      + \frac{\sigma}{\sqrt{\varepsilon}} ~.
  \end{equation}

  The parameter choices \eqref{eq:choice_S0}, \eqref{eq:B_simplified}, \eqref{eq:choice_p}, \eqref{eq:choice_gamma}, and \eqref{eq:choice_K} together guarantee
  \begin{align*}
    \frac{1}{K}\sum_{k=0}^{K-1}\E{\sqnorm{ \nabla f(x^k) }} \;\le\; \varepsilon ~.
  \end{align*}
\end{proof}
\subsection{Proof of \Cref{thm:time_complexity}}
\label{proof:time_complexity}
Before proving the theorem, we first prove a simple lemma.
A result of this type was used in \citep[Theorem~7.5]{tyurin2023optimal}; here we provide a simpler proof.
\begin{lemma}\label{lem:time_to_collect_B}
  Suppose we have $n$ workers with computation times as in \Cref{ass:fixed_time}.
  Let $B\in\{1,2,\ldots\}$.
  Consider any iteration boundary at which the server starts collecting new stochastic gradients.
  Then, the time needed to collect $B$ stochastic gradients is at most
  \begin{equation*}
    2 T(B) = 2 \min_{m\in[n]}\left(\sum_{i=1}^{m}\frac{1}{\tau_i}\right)^{-1}(B + m) ~.
  \end{equation*}
\end{lemma}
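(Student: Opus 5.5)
Fix an arbitrary $m\in[n]$ and set $S_m \coloneqq \sum_{i=1}^m \tau_i^{-1}$ and $t_m \coloneqq 2 S_m^{-1}(B+m)$. The plan is to show that by time $t_m$, measured from the iteration boundary, the server has received at least $B$ stochastic gradients. Minimizing over $m$ then gives the claim.

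To be safe, I will assume the worst case at the boundary. Each worker may be in the middle of a computation that is now useless: its gradient is either stale or discarded, as in the reset convention used in the universal-model discussion. Such a worker can waste at most $\tau_i$ seconds finishing or abandoning it before it starts fresh computations. Hence during $[0,t]$, worker $i$ delivers at least $\lfloor (t-\tau_i)/\tau_i\rfloor \ge t/\tau_i - 2$ useful gradients whenever $t\ge \tau_i$. The bound $t/\tau_i-2$ is also trivially valid (it is negative) when $t<\tau_i$. The factor $2$ in the lemma is exactly what absorbs this lost initial slot together with the floor.

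Summing only over the $m$ fastest workers gives
\[
\sum_{i=1}^m \Bigl(\frac{t_m}{\tau_i}-2\Bigr) = t_m S_m - 2m = 2(B+m) - 2m = 2B \ge B.
\]
So at least $B$ gradients have arrived by time $t_m$. Since $m$ was arbitrary, the collection time is at most $\min_m t_m = 2T(B)$.

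The main obstacle is the careful accounting of the boundary effect. If the protocol simply lets ongoing computations continue and counts them when they arrive, the lost time is at most one $\tau_i$ per worker and the argument above goes through. If instead we count pairs, as in \rennalamvr, the same argument applies with $\tau_i$ replaced by $2\tau_i$, which changes only constants and is absorbed in the $\mathrm{O}(\cdot)$ of \Cref{thm:time_complexity}. I would first state the argument with the bound $\lfloor t/\tau_i\rfloor - 1 \ge t/\tau_i - 2$ and check that it holds for every $t\ge 0$ and every worker state at the boundary.
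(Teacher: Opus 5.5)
Your proposal is correct and matches the paper's proof essentially step for step. The paper also treats the first $\tau_i$-length slot of each worker as wasted and bounds the contribution by $\lfloor t/\tau_i\rfloor - 1 \ge t/\tau_i - 2$. It then sums over the $m$ fastest workers and evaluates at $t = 2\bigl(\sum_{i\le m} 1/\tau_i\bigr)^{-1}(B+m)$ to get $2B \ge B$. The only cosmetic difference is that the paper plugs in the minimizing index $j^*$ directly, whereas you fix an arbitrary $m$ and minimize afterwards.
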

\begin{proof}
  Fix $t>0$. For worker $i$, in the worst case the worker finishes and sends a gradient just before the iteration starts.
  Therefore, the first interval of length $\tau_i$ after the iteration starts may not produce a \emph{new} gradient usable in this iteration.
  Hence, within $t$ seconds from the iteration start, worker $i$ can contribute at least
  \begin{equation*}
    n_i(t) \coloneqq \max\left\{\left\lfloor \frac{t-\tau_i}{\tau_i}\right\rfloor,0\right\}
    = \max\left\{\left\lfloor \frac{t}{\tau_i}\right\rfloor-1,0\right\}
  \end{equation*}
  new gradients.
  Set
  \begin{align*}
    j^* = \argmin_{j\in[n]}\left(\sum_{i=1}^{j}\frac{1}{\tau_i}\right)^{-1}(B + j) ~,
  \end{align*} 
  then, since $\max\{u,0\}\ge u$, we have $n_i(t)\ge \left\lfloor \frac{t}{\tau_i}\right\rfloor-1$, and therefore
  \begin{align*}
    \sum_{i=1}^{n} n_i(t) \ge \sum_{i=1}^{j^*} n_i(t)
    \;\ge\; \sum_{i=1}^{j^*}\left(\left\lfloor \frac{t}{\tau_i}\right\rfloor-1\right)
    \;\ge\; \sum_{i=1}^{j^*}\left(\frac{t}{\tau_i}-2\right)
    \;=\; t\sum_{i=1}^{j^*}\frac{1}{\tau_i}-2j^*,
  \end{align*}
  where we used $\lfloor a\rfloor \ge a-1$ for all $a\ge 0$.
  Thus, if $t$ satisfies
  \begin{equation*}
    t\sum_{i=1}^{j^*}\frac{1}{\tau_i}-2j^* \;\ge\; B,
  \end{equation*}
  then by time $t$ the server can collect at least $B$ gradients from workers $1,\ldots,j^*$.
  Choosing
  \begin{equation*}
    t=2 \left(\sum_{i=1}^{j^*}\frac{1}{\tau_i}\right)^{-1}(B+j^*)
  \end{equation*}
  makes the left-hand side equal to $2B \geq B$, concluding the proof.
\end{proof}
\begin{restate-theorem}{\ref{thm:time_complexity}}[Time Complexity of \Cref{algo}]
  Assume the setup of \Cref{thm:iteration_complexity} and our distributed time model:
  there are $n$ workers with per-sample computation times $0<\tau_1\le \cdots \le \tau_n$.
  Assume that on worker $i$, computing the pair $\bigl(\nabla f(x^{k}; \xi),\nabla f(x^{k+1}; \xi)\bigr)$ takes $2\tau_i$ seconds.

  Then, the initialization (collecting $B_0$ single gradients) takes at most
  \begin{equation}\label{eq:time_init_mvr}
    2 \cdot \min_{m\in[n]}\left(\sum_{i=1}^{m}\frac{1}{\tau_i}\right)^{-1}(B_0 + m)
  \end{equation}
  seconds, and each iteration (collecting $B$ gradient-pairs) takes at most
  \begin{equation}\label{eq:time_iter_mvr}
    4 \cdot \min_{m\in[n]}\left(\sum_{i=1}^{m}\frac{1}{\tau_i}\right)^{-1}(B + m)
  \end{equation}
  seconds.

  In particular, with the parameter choice of \Cref{thm:iteration_complexity}, \Cref{algo} returns an $\varepsilon$--stationary point within
  \begin{equation}\label{eq:time_mvr_eps}
    T_{\mathrm{MVR}}
    \;\le\;
    2 \cdot \min_{m\in[n]}\left(\sum_{i=1}^{m}\frac{1}{\tau_i}\right)^{-1}(B_0 + m)
    \;+\;
    4K\cdot \min_{m\in[n]}\left(\sum_{i=1}^{m}\frac{1}{\tau_i}\right)^{-1}(B + m)
  \end{equation}
  seconds, where
  \begin{align*}
    B_0 &= \left\lceil \frac{6\sigma^2}{\varepsilon}\right\rceil,
    &
    B &= \left\lceil \frac{6\sigma}{\sqrt{\varepsilon}}\right\rceil,
    &
    K &\ge \frac{24\Delta\bar L}{\varepsilon}
    + \frac{\sigma}{\sqrt{\varepsilon}} ~.
  \end{align*}
  Thus,
  \begin{equation*}
    T_{\mvr}
    = \cO\Bigg( \left( \frac{\sigma}{\sqrt{\varepsilon}} + \frac{ \bar{L} \Delta }{\varepsilon} \right) \min_{m\in[n]} \left(\sum_{i=1}^{m}\frac{1}{\tau_i}\right)^{-1} \left( \frac{\sigma}{\sqrt{\varepsilon}} + m \right) 
    + \min_{m\in[n]} \left(\sum_{i=1}^{m}\frac{1}{\tau_i}\right)^{-1}\left( \frac{\sigma^2}{\varepsilon} + m \right) \Bigg) ~.
  \end{equation*}
\end{restate-theorem}
\begin{proof}
  The initialization bound follows from \Cref{lem:time_to_collect_B} applied with $B=B_0$ and computation times $\tau_i$.

  For each iteration, the server collects $B$ arrivals, and each arrival produced by worker $i$ is a gradient-pair and takes $2\tau_i$ seconds.
  Applying \Cref{lem:time_to_collect_B} with computation times $2\tau_i$ gives that one iteration takes at most
  \begin{align*}
    2\min_{m\in[n]}\left(\sum_{i=1}^{m}\frac{1}{2\tau_i}\right)^{-1}(B+m)
    =
    4 \min_{m\in[n]}\left(\sum_{i=1}^{m}\frac{1}{\tau_i}\right)^{-1}(B+m).
  \end{align*}
  Summing the initialization time and $K$ iteration times yields the desired bound.
\end{proof}
\section{Proofs of Lower Bound}
We start by introducing several definitions and notations that will be used throughout this section.

For a vector $x\in\R^d$, let $\operatorname{support}(x)\coloneqq\{i\in[d]\mid x_i\neq 0\}$ and $x_{\ge i}\coloneqq(x_i,\ldots,x_d)\in\R^{d-i+1}$.
For $\alpha\in[0,1)$ define the \emph{progress}
\begin{equation}\label{eq:progress}
  \operatorname{prog}_{\alpha}(x)\coloneqq\max\{\,i\in\{0,1,\dots,d\}\mid |x_i|>\alpha\,\},
  \qquad\text{with }~ x_0\equiv 1 ~.
\end{equation}

Our goal is to construct functions and stochastic oracles such that, under zero-respecting algorithms, each completed oracle reply can activate at most one new coordinate.

\begin{definition}[First-order zero-chain]\label{def:zero-chain}
A differentiable function $F:\R^T\to\R$ is a (first-order) zero-chain if, for all $x\in\R^T$,
\begin{equation}\label{eq:zc-deterministic}
  \operatorname{prog}_{0} \bigl(\nabla F(x)\bigr)\ \le\ \operatorname{prog}_{0}(x)+1.
\end{equation}
\end{definition}

\paragraph{Intuition.}
In the noiseless case $g(x,\xi)\equiv\nabla F(x)$, \eqref{eq:zc-deterministic} implies that a zero-respecting
algorithm can reveal at most one new coordinate per completed oracle reply, so its progress is at most linear
in the number of replies.

\begin{definition}[Probability-$p$ zero-chain]\label{def:pzc}
  A stochastic mapping $g:\R^T\times \cD \to\R^T$ is a probability-$p$ zero-chain if, for all $x\in\R^T$,
  \begin{align}
    \mathbb{P}_\xi\!\left(\operatorname{prog}_0 \bigl(g(x,\xi)\bigr) = \operatorname{prog}_{\tfrac14}(x)+1\right) &\le p, \label{eq:pzc-1}\\
    \mathbb{P}_\xi\!\left(\operatorname{prog}_0 \bigl(g(x,\xi)\bigr) > \operatorname{prog}_{\tfrac14}(x)+1\right) &= 0. \label{eq:pzc-2}
  \end{align}
\end{definition}

\subsection*{Setup}
We use the same deterministic chain function as in the works by \citet{tyurin2023optimal} and \citet{arjevani2022lower}.
Define $F_T:\R^T\!\to\R$ by
\begin{equation*}
  F_T(x)\ \coloneqq-\Psi(1)\,\Phi(x_1)+\sum_{i=2}^T \Bigl(\Psi(-x_{i-1})\,\Phi(-x_i)\ -\ \Psi(x_{i-1})\,\Phi(x_i)\Bigr),
\end{equation*}
where
\begin{align*}
  &\Psi(t)=
  \begin{cases}
    0, & t\le \tfrac12,\\[3pt]
    \exp\!\Bigl(1-\frac{1}{(2t-1)^2}\Bigr), & t>\tfrac12,
  \end{cases}\\
  \qquad
  &\Phi(t)\ =\ \sqrt{e}\ \int_{-\infty}^{t} e^{-\tfrac12 \tau^2}\, d\tau.
\end{align*}
\begin{lemma}[Properties of $F_T$, cf.\ Lemma~2]\label{lem:FT-props}
  There are absolute constants $\Delta_0=12$, $\ell_1=152$, $\gamma_\infty=23$ such that:
  \begin{enumerate}
  \item $F_T(0)-\inf_x F_T(x)\le \Delta_0\cdot T$.
  \item $\nabla F_T$ is $\ell_1$–Lipschitz in $\ell_1$.
  \item For all $x$, $\|\nabla F_T(x)\|_\infty\le \gamma_\infty$.
  \item $\operatorname{prog}_0\!\bigl(\nabla F_T(x)\bigr)\ \le\ \operatorname{prog}_{1/2}(x)+1$.
  \item If $\operatorname{prog}_{1}(x)<T$, then $\|\nabla F_T(x)\|\ \ge\ \bigl|\nabla_{\operatorname{prog}_0(x)+1}F_T(x)\bigr|\ >\ 1$.
  \end{enumerate}
\end{lemma}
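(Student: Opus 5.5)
The plan is to verify the five items one at a time, using the explicit form of $F_T$ and the scalar properties of $\Psi$ and $\Phi$. The numerical constants will be imported from the analysis of \citet{arjevani2022lower} (their Lemma~2, building on Carmon et al.) rather than recomputed.

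\emph{Scalar facts.} I will use the following properties of the building blocks:
\begin{itemize}
\item $\Psi\ge 0$, $\Psi(t)=0$ for $t\le\tfrac12$, $\Psi(1)=1$, $\Psi$ is nondecreasing, and $\Psi'(t)=0$ for $t\le\tfrac12$;
\item $0<\Phi<\sqrt{2\pi e}$ and $\Phi'(t)=\sqrt e\,e^{-t^2/2}$, so $\Phi'(0)=\sqrt e$;
\item $\Psi,\Psi',\Psi'',\Phi',\Phi''$ are uniformly bounded by absolute constants.
\end{itemize}

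\emph{Items 1--4.}
\begin{itemize}
\item \emph{Item 1.} At $x=0$ every link with $i\ge2$ vanishes, so $F_T(0)=-\Psi(1)\Phi(0)$. Each summand of $F_T$ is bounded below by $-\sup\Psi\cdot\sup\Phi$. Hence $F_T(0)-\inf F_T\le T\cdot\sup\Psi\sup\Phi\le 12\,T$.
\item \emph{Item 2.} The Hessian of $F_T$ is tridiagonal, and each entry is a product of the bounded derivatives above. Bounding the entries and using a row-sum bound on the operator norm gives a Lipschitz constant $\ell_1=152$.
\item \emph{Item 3.} Each partial derivative $\partial_j F_T$ involves only the two links containing $x_j$. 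Bounding them termwise gives $\gamma_\infty=23$.
\item \emph{Item 4.} Let $j>\operatorname{prog}_{1/2}(x)+1$. Then $|x_{j-1}|\le\tfrac12$ and $|x_j|\le\tfrac12$. Every term of $\partial_jF_T$ carries a factor $\Psi(\pm x_{j-1})$ or $\Psi'(\pm x_j)$, and all of these vanish. So $\partial_jF_T(x)=0$, which is item 4.
\end{itemize}

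\emph{Item 5.} The first inequality $\|\nabla F_T(x)\|\ge|\nabla_jF_T(x)|$ is trivial for any coordinate $j$. The content is the lower bound at $j=\operatorname{prog}_0(x)+1$, which is a valid index only when $\operatorname{prog}_0(x)<T$; I would treat that as the operative case. For this $j$ we have $x_j=x_{j+1}=0$. Since $\Psi'(0)=0$, the contribution of the link $(j,j+1)$ vanishes, and
\[
\nabla_jF_T(x)=-\Phi'(0)\bigl(\Psi(-x_{j-1})+\Psi(x_{j-1})\bigr)=-\sqrt e\,\Psi(|x_{j-1}|),
\]
with the convention $x_0\equiv1$ for $j=1$. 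It therefore suffices to show $|x_{j-1}|\ge1$, because then $\Psi(|x_{j-1}|)\ge\Psi(1)=1$ and $|\nabla_jF_T(x)|\ge\sqrt e>1$.

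\emph{Main obstacle.} The hard step is extracting $|x_{j-1}|\ge1$ from the hypothesis $\operatorname{prog}_1(x)<T$. This holds immediately when $\operatorname{prog}_0(x)=\operatorname{prog}_1(x)$, i.e.\ when the last nonzero coordinate already exceeds $1$ in magnitude. That is the situation of the zero-respecting iterates produced in the lower-bound argument, where coordinates are activated one at a time.

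If this does not hold, I would fall back on Carmon et al.'s argument to obtain the norm bound $\|\nabla F_T(x)\|>1$. Take the first index $i$ with $|x_i|<1$. Then either $\Psi(|x_{i-1}|)\ge1$ forces a large $|\partial_iF_T|$, or a careful case analysis on the signs of neighbouring coordinates shows some nearby partial exceeds $1$. I would then check that this is compatible with the stated coordinate-wise form.

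The constants in items 1--3 are routine but tedious, so I would cite them directly from \citet{arjevani2022lower}.
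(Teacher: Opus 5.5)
Your arguments for items~1 and~4 are correct. Citing the constants for items~2--3 matches what the paper does: it gives no proof of this lemma and imports it wholesale from Lemma~2 of \citet{arjevani2022lower}. The genuine gap is item~5, and it cannot be closed, because the middle inequality with index $\operatorname{prog}_0(x)+1$ is false.

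Your own (correct) computation already shows this. For $j=\operatorname{prog}_0(x)+1\le T$ you obtain $\nabla_jF_T(x)=-\sqrt{e}\,\Psi(|x_{j-1}|)$. The hypothesis $\operatorname{prog}_1(x)<T$, which is automatic once $\operatorname{prog}_0(x)<T$, gives no lower bound on the last nonzero coordinate $x_{j-1}$. Concretely, take $T\ge3$ and $x=(2,\tfrac12,0,\dots,0)$. Then $\operatorname{prog}_1(x)=1<T$ and $\operatorname{prog}_0(x)=2$, yet $\nabla_3F_T(x)=-\sqrt{e}\,\bigl(\Psi(\tfrac12)+\Psi(-\tfrac12)\bigr)=0$. (The index $\operatorname{prog}_0(x)+1$ does not even exist when $\operatorname{prog}_0(x)=T$, which you noticed.) So your step ``check that this is compatible with the stated coordinate-wise form'' must fail.

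Nor can you restrict to the case $\operatorname{prog}_0(x)=\operatorname{prog}_1(x)$. A zero-respecting algorithm may place arbitrarily small values on newly revealed coordinates, so nothing forces the last nonzero entry of an iterate to exceed $1$ in magnitude.

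What you should do instead is recognize the misindexing and prove the version in the original lemma, with $j=\operatorname{prog}_1(x)+1$. Take $\operatorname{prog}_1(x)=0$ when no coordinate exceeds $1$ in magnitude, since the paper defines $\operatorname{prog}_\alpha$ only for $\alpha<1$. Then $j\le T$, $|x_j|\le 1$, and either $j=1$ or $|x_{j-1}|>1$. With $x_0\equiv 1$,
\[
  \nabla_jF_T(x)=-\Psi(-x_{j-1})\Phi'(-x_j)-\Psi(x_{j-1})\Phi'(x_j)-\Psi'(-x_j)\Phi(-x_{j+1})-\Psi'(x_j)\Phi(x_{j+1}),
\]
where the last two terms are absent if $j=T$. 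Since $\Psi,\Psi',\Phi,\Phi'\ge 0$, every term is nonpositive, so
\[
  |\nabla_jF_T(x)|\ \ge\ \Psi(|x_{j-1}|)\,e^{(1-x_j^2)/2}\ \ge\ \Psi(|x_{j-1}|).
\]
This exceeds $1$ when $j\ge 2$. When $j=1$, strictness comes from $|x_1|<1$, or, if $|x_1|=1$ and $T\ge 2$, from the term $\Psi'(1)\Phi(\pm x_2)>0$. No case analysis on signs is needed; this is essentially the argument of Carmon et al.

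This yields $\|\nabla F_T(x)\|>1$ whenever $\operatorname{prog}_1(x)<T$. Via $\operatorname{prog}_1\le\operatorname{prog}_0$, that is the only consequence of item~5 the proof of \Cref{thm:lower_bound} actually uses. So the misstatement is harmless downstream, but item~5 as written is not provable.
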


Next we define the estimator. Let $\Gamma:\R\to\R$ be smooth, non-decreasing, and Lipschitz, with
\begin{equation}\label{eq:Gamma-def}
  \Gamma(t)=0\ \text{ for } t\le \tfrac14, \qquad \Gamma(t)=1\ \text{ for } t\ge \tfrac12.
\end{equation}
Define, for each $i$,
\begin{align}
  \Theta_i(x)\coloneqq\Gamma\!\left( 1 - \biggl\|\Gamma\bigl(|x_{\ge i}|\bigr)\biggr\|_2 \right) \label{eq:Theta-def}
\end{align}
so that
\begin{align}\label{eq:Theta-bounds}
  \mathbf{1}\{i>\operatorname{prog}_{1/4}(x)\}\ \le\ \Theta_i(x)\ \le\ \mathbf{1}\{i>\operatorname{prog}_{1/2}(x)\}.
\end{align}
A concrete choice is obtained by the “integrated bump”:
\begin{align}\label{eq:Gamma-bump}
  \Gamma(t)=\frac{\int_{1/4}^{t}\Lambda(\tau)\,d\tau}{\int_{1/4}^{1/2}\Lambda(\tau')\,d\tau'},
  \quad \Lambda(t)= 
  \begin{cases}
    0, & t\le \tfrac14\ \text{or}\ t\ge \tfrac12,\\[4pt]
    \exp\!\Bigl( -\dfrac{1}{100\,(t-\tfrac14)\,(\tfrac12-t)} \Bigr), & \tfrac14<t<\tfrac12.
  \end{cases}
\end{align}
This $\Gamma$ satisfies: $\Gamma\in C^\infty$, $0\le \Gamma'(t)\le 6$, and $|\Gamma''(t)|\le 128$.

Define the smoothed estimator
\begin{align}\label{eq:gbar}
  \bigl[\bar g_T(x,\xi)\bigr]_i\ \coloneqq\ \nabla_i F_T(x)\,\nu_i(x,\xi), \qquad 
  \nu_i(x,\xi)\coloneqq1+\Theta_i(x)\Bigl(\frac{\xi}{p}-1\Bigr),\quad \xi\sim\mathrm{Bernoulli}(p).
\end{align}
\begin{lemma}[Mean-squared smooth estimator (Lemma~4 in \citet{arjevani2022lower})]\label{lem:gbar}
  $\bar g_T$ is a probability-$p$ zero-chain, is unbiased for $\nabla F_T$, and there exist constants $\varsigma=23$ and $\bar\ell_1=328$ such that for all $x,y\in\R^T$,
  \begin{align*}
    \E{\bigl\|\bar g_T(x,z)-\nabla F_T(x)\bigr\|^2} &\le \varsigma^2\,\frac{1-p}{p}, \\
    \E{\bigl\|\bar g_T(x,z)-\bar g_T(y,z)\bigr\|^2} &\le \frac{\bar\ell_1^{\,2}}{p}\,\|x-y\|^2.
  \end{align*}
\end{lemma}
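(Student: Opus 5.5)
The plan is to verify the four claims of \Cref{lem:gbar} in turn, each directly from the definition \eqref{eq:gbar}, using the properties of $F_T$ in \Cref{lem:FT-props} and the sandwich bound \eqref{eq:Theta-bounds}. Write $\nu_i(x,\xi)=1+\Theta_i(x)(\xi/p-1)$ with $\xi\sim\mathrm{Bernoulli}(p)$. Two facts about $\xi$ do most of the work: $\E{\xi/p-1}=0$ and $\E{(\xi/p-1)^2}=\frac{1-p}{p}$.

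\emph{Unbiasedness.} Since $\E{\nu_i(x,\xi)}=1$ for every $i$, we get $\E{\bar g_T(x,\xi)}=\nabla F_T(x)$ immediately.

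\emph{Probability-$p$ zero-chain.} Set $j=\operatorname{prog}_{1/4}(x)$. For $i>j+1$ we have $i>\operatorname{prog}_{1/2}(x)+1$, because $\operatorname{prog}_{1/2}\le\operatorname{prog}_{1/4}$. Item 4 of \Cref{lem:FT-props} then gives $\nabla_iF_T(x)=0$, so $[\bar g_T]_i=0$ almost surely. This proves \eqref{eq:pzc-2}. For $i=j+1$, \eqref{eq:Theta-bounds} gives $\Theta_i(x)=1$, hence $\nu_i=\xi/p$. This coordinate is therefore nonzero only on the event $\{\xi=1\}$, which has probability $p$, and this proves \eqref{eq:pzc-1}.

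\emph{Variance bound.} The error has coordinates $[\bar g_T-\nabla F_T]_i=\nabla_iF_T(x)\,\Theta_i(x)\,(\xi/p-1)$. By \eqref{eq:Theta-bounds}, $\Theta_i(x)\neq0$ forces $i>\operatorname{prog}_{1/2}(x)$. By item 4, $\nabla_iF_T(x)\neq0$ forces $i\le\operatorname{prog}_{1/2}(x)+1$. So at most the single coordinate $i=\operatorname{prog}_{1/2}(x)+1$ survives. Using $\Theta_i\le 1$ and item 3 ($|\nabla_iF_T|\le\gamma_\infty$), the variance is at most $\gamma_\infty^2\frac{1-p}{p}$. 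This gives $\varsigma=\gamma_\infty=23$.

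\emph{Mean-squared smoothness.} Write $\bar g_T(x,\xi)=\nabla F_T(x)+(\xi/p-1)\,\nabla F_T(x)\odot\Theta(x)$, where $\Theta(x)$ is the vector with entries $\Theta_i(x)$. The cross term has zero mean, so
\begin{equation*}
\E{\|\bar g_T(x,\xi)-\bar g_T(y,\xi)\|^2}=\|\nabla F_T(x)-\nabla F_T(y)\|^2+\tfrac{1-p}{p}\,\|\nabla F_T(x)\odot\Theta(x)-\nabla F_T(y)\odot\Theta(y)\|^2 .
\end{equation*}
The first term is at most $\ell_1^2\|x-y\|^2$ by item 2. For the second, I would split it as
\begin{equation*}
(\nabla F_T(x)-\nabla F_T(y))\odot\Theta(x)\;+\;\nabla F_T(y)\odot(\Theta(x)-\Theta(y)).
\end{equation*}
The first piece is again at most $\ell_1\|x-y\|$, since $0\le\Theta\le1$. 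The remaining task is to prove
\begin{equation*}
\|\nabla F_T(y)\odot(\Theta(x)-\Theta(y))\|\le C\|x-y\|
\end{equation*}
for an absolute constant $C$. Given that, and $1+\frac{1-p}{p}=\frac1p$, the whole expression is at most $\frac{\bar\ell_1^2}{p}\|x-y\|^2$ with $\bar\ell_1=O(\ell_1+C)$, which is consistent with the stated value $328$.

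\emph{Main obstacle.} The hard step is bounding $\nabla F_T(y)\odot(\Theta(x)-\Theta(y))$. Each $\Theta_i$ is Lipschitz: composing $\Gamma$, the $\ell_2$ norm and $\Gamma$ again gives a constant of at most $6\cdot 6=36$. But a naive sum over all coordinates $i$ would lose a dimension factor. The support information must be exploited: $\nabla_iF_T(y)\neq0$ only for $i\le\operatorname{prog}_{1/2}(y)+1$, while $\Theta_i(y)=0$ for $i\le\operatorname{prog}_{1/2}(y)$. So on the support of $\nabla F_T(y)$, the factor $\Theta(x)-\Theta(y)$ equals $\Theta(x)$ except at possibly one index. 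I would first try a case split on whether $\operatorname{prog}_{1/4}(x)$ is much larger than $\operatorname{prog}_{1/2}(y)$. If it is, then $\Theta(x)$ vanishes on the relevant indices $i\le\operatorname{prog}_{1/2}(y)$. If it is not, only $O(1)$ indices contribute, and since each $\Theta_i(x)$ for $i\le\operatorname{prog}_{1/2}(y)$ is positive only if the tail $y_{\ge i}$ has a coordinate above $\tfrac12$ while $x_{\ge i}$ has all coordinates at most $\tfrac14$ in magnitude, $\|x-y\|$ must be at least $\tfrac14$, so the contribution is at most $\gamma_\infty\cdot 4\|x-y\|$ per index. If this bookkeeping does not close cleanly, I would follow the argument of Lemma~4 in \citet{arjevani2022lower} line by line.
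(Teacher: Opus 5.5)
The paper does not prove this lemma; it imports it from Lemma~4 of \citet{arjevani2022lower}. Your proofs of unbiasedness, of the probability-$p$ zero-chain property, and of the variance bound with $\varsigma=\gamma_\infty=23$ are correct. Your exact decomposition of $\mathbb{E}\|\bar g_T(x,\xi)-\bar g_T(y,\xi)\|^2$ is also correct.

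The gap is the bound on $\nabla F_T(y)\odot(\Theta(x)-\Theta(y))$, and both branches of your case split fail.
\begin{itemize}
\item \textbf{First branch.} $\Theta_i(x)$ is only guaranteed to vanish for $i\le\operatorname{prog}_{1/2}(x)$, not for $i\le\operatorname{prog}_{1/4}(x)$. A single coordinate of $x$ slightly above $\tfrac14$ makes $\operatorname{prog}_{1/4}(x)=T$ while $\Theta_i(x)=1$ for every $i$.
\item \textbf{Second branch.} The claim that only $O(1)$ indices contribute is false. Take $x=0$ and $y=(1,\dots,1,0,\dots,0)$ with $m$ ones. Then $\Theta_i(0)=1$ and $\nabla_iF_T(y)\le-\Psi(1)\Phi'(y_i)<0$ for all $i\le m$, so $m$ indices contribute. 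The inequality survives here only because $\|x-y\|^2=m$ grows as well, and your sketch never ties the number of contributing indices to $\|x-y\|^2$.
\item \textbf{Separation claim.} $\Theta_i(x)>0$ does not force $|x_k|\le\tfrac14$ for $k\ge i$; that is the condition for $\Theta_i(x)=1$. Positivity only gives $\|\Gamma(|x_{\ge i}|)\|<\tfrac34$, hence a separation $\|x_{\ge i}-y_{\ge i}\|>\tfrac1{24}$, not $\tfrac14$.
\item \textbf{Constant.} The value $328$ is not actually obtained. With your bounds $|\nabla_iF_T|\le\gamma_\infty$ and $\operatorname{Lip}(\Theta_i)\le 36$, the boundary index $\operatorname{prog}_{1/2}(y)+1$ alone costs $23\cdot 36=828$. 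So the endpoint split with these bounds cannot give $\bar\ell_1=328$.
\end{itemize}
Falling back on the citation is what the paper does, but then your write-up adds nothing for this part.

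A clean way to close the step is to bound the Jacobian of $h(z)\coloneqq\nabla F_T(z)\odot\Theta(z)$ pointwise instead of comparing the endpoints. Here $J_h=\mathrm{diag}(\Theta)\nabla^2F_T+\mathrm{diag}(\nabla F_T)J_\Theta$, and the first term has operator norm at most $\ell_1$.

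Row $i$ of the second term is nonzero only if $\Gamma'\bigl(1-\|\Gamma(|z_{\ge i}|)\|\bigr)\neq 0$, i.e.\ $\|\Gamma(|z_{\ge i}|)\|\in(\tfrac12,\tfrac34)$. This forces $|z_k|<\tfrac12$ for all $k\ge i$, hence $i>\operatorname{prog}_{1/2}(z)$. Combined with item~4 of \Cref{lem:FT-props}, only the row $i=\operatorname{prog}_{1/2}(z)+1$ can survive.

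In that row, $|z_i|<\tfrac12$ kills the $\Psi'(\pm z_i)$ terms, so $|\nabla_iF_T(z)|\le\sup\Psi\cdot\sup\Phi'=e^{3/2}$, and $\|\nabla\Theta_i\|\le 36$. Thus $\|J_h\|_{\mathrm{op}}\le 152+36e^{3/2}<314$. The mean value inequality gives $\|h(x)-h(y)\|\le 314\|x-y\|$, and therefore
\begin{equation*}
\mathbb{E}\|\bar g_T(x,\xi)-\bar g_T(y,\xi)\|^2\le\Bigl(\ell_1^2+\frac{1-p}{p}\,314^2\Bigr)\|x-y\|^2\le\frac{328^2}{p}\,\|x-y\|^2 .
\end{equation*}
This applies your one-active-index observation at every point of the segment rather than at the two endpoints. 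That removes both the dimension dependence and the case analysis.
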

\subsection{Proof of \Cref{thm:lower_bound}}
\label{proof:lower_bound}
Let us first restate the theorem before proving it.
\begin{restate-theorem}{\ref{thm:lower_bound}}
  Fix $\Delta>0$, $\bar{L}>0$, $\sigma^2>0$, $0<\varepsilon<c'\bar{L}\Delta$, an integer $B\ge1$, and $n$ workers with batch-time functions $\{\tau_i(\cdot)\}_{i=1}^n$ that satisfy \Cref{ass:batch-time}.
  Write $\tau_i\coloneqq\tau_i(1)$ and assume $0<\tau_1\le\cdots\le\tau_n$.
  Let the chain constants $\Delta_0,\ell_1,\gamma_\infty$ be as in \Cref{lem:FT-props} and the estimator constants $\varsigma,\bar\ell_1$ as in \Cref{lem:gbar}.
  Define
  \begin{equation*}
    p\;\eqdef\;\min\!\left\{\frac{2\varepsilon \varsigma^2 }{\sigma^2},\,1\right\},
    \qquad
    L\;\eqdef\;\frac{\ell_1}{\bar\ell_1}\,\bar L\,\sqrt{p}\ \ (\le \bar L),
  \end{equation*}
  \begin{equation*}
    \lambda\;\eqdef\;\frac{\ell_1}{L}\,\sqrt{2\varepsilon},
    \qquad
    T\;\eqdef\;\Biggl\lfloor \frac{L\Delta}{2\,\Delta_0\,\ell_1\,\varepsilon}\Biggr\rfloor.
  \end{equation*}
  Then there exist $f\in\cF_{\Delta,\bar L}$ and an oracle class $\mathcal{O}\in \cO_{\tau_1(\cdot),\dots,\tau_n(\cdot)}^{\sigma^2,\bar L, B}(f)$ such that, under Protocol~\ref{protocol},
  \begin{align*}
    m_{\mathrm{time}}(B, \Delta, \bar L, \sigma^2)
    \ \ge\ c \cdot \left( \frac{\bar L \Delta \min\left\{\nicefrac{\sqrt{\varepsilon}}{\sigma},\,1\right\} }{\varepsilon} + 1 \right) \
    \min_{m\in[n]}
    \left(\sum_{i=1}^{m}\frac{1}{\tau_i}\right)^{-1}
    \left( \frac{\sigma^2}{\varepsilon} + m \right) .
  \end{align*}
\end{restate-theorem}
\begin{proof}
  Before presenting the proof, we briefly outline the argument.
  We first follow the construction by \citet{arjevani2022lower}, and then derive the time complexity lower bound using the time protocol analysis from \citet{tyurin2023optimal}.

We work under Protocol~\ref{protocol} with batch sizes bounded by $B$.
By the definition of zero-respecting algorithms, for any interaction $r\ge1$ and any $k\in[B]$,
\begin{equation*}
  \operatorname{support}\!\left(x^{(r,k)}\right)\ \subseteq\ \bigcup_{s<r}\ \bigcup_{k'\in[B]}\operatorname{support}\!\left(g^{(s,k')}\right).
\end{equation*}
At $r=1$ the union on the right is empty, hence $x^{(1,k)}=0$ for all $k\in[B]$.

  Fix parameters $\Delta>0$, accuracy $\varepsilon>0$, and let $L\le\bar L$ be chosen below. Define
  \begin{equation}\label{eq:f-rescaled}
    f(x)=\frac{L\lambda^{2}}{\ell_1}\,F_T\!\left(\frac{x}{\lambda}\right),
    \qquad
    \lambda=\frac{\ell_1}{L}\,\sqrt{2\varepsilon},
    \qquad
    T=\left\lfloor \frac{\Delta}{\Delta_0\,(L\lambda^2/\ell_1)}\right\rfloor
    =\left\lfloor \frac{L\Delta}{2\Delta_0\ell_1\,\varepsilon}\right\rfloor.
  \end{equation}
  By \Cref{lem:FT-props} (i),(ii), \(f\in\cF_{\Delta,\bar L}\).
  Moreover, \(\nabla f(x)=\frac{L\lambda}{\ell_1}\,\nabla F_T(x/\lambda)\).

  Let the Bernoulli parameter \(p\in(0,1]\) be chosen below.
  Define
  \begin{align}\label{eq:g-rescaled}
    \nonumber
    \bigl[\nabla f(x;\xi)\bigr]_j
    =\frac{L\lambda}{\ell_1}\,\bigl[\bar g_T(x/\lambda,\xi)\bigr]_j
    &=\frac{L\lambda}{\ell_1}\,\nabla_j F_T\!\left(\frac{x}{\lambda}\right)\!
    \Bigl(1+\Theta_j(x/\lambda)\bigl(\tfrac{\xi}{p}-1\bigr)\Bigr)\\
    &= \nabla_j f(x)\Bigl(1+\Theta_j(x/\lambda)\bigl(\tfrac{\xi}{p}-1\bigr)\Bigr),
    \quad \xi\sim{\rm Bernoulli}(p).
  \end{align}

  By \Cref{lem:gbar}, we have
  \[
    \mathbb E[\nabla f(x;\xi)]= \nabla f(x),
  \]
  and
  \[
    \mathbb E\bigl\|\nabla f(x;\xi)-\nabla f(x)\bigr\|^2
    \le \Bigl(\tfrac{L\lambda}{\ell_1}\Bigr)^2\,\varsigma^2\,\frac{1-p}{p}.
  \]
  Choosing
  \begin{equation}\label{eq:p-choice}
    p=\min\!\left\{\frac{2\varepsilon \varsigma^2 }{\sigma^2},\,1\right\}
  \end{equation}
  makes the variance \(\le\sigma^2\).

  Again by Lemma~\ref{lem:gbar},
  \[
    \mathbb E\bigl\|\nabla f(x;\xi)-\nabla f(y;\xi)\bigr\|^2
    \le \Bigl(\tfrac{\bar\ell_1 L}{\ell_1\sqrt p}\Bigr)^2\,\|x-y\|^2.
  \]
  Thus taking
  \begin{equation}\label{eq:L-choice}
    L=\frac{\ell_1}{\bar\ell_1}\,\bar L\,\sqrt p
    =\frac{\ell_1}{\bar\ell_1}\,\bar L\cdot \min\left\{\frac{\varsigma \sqrt{2\varepsilon}}{\sigma},\,1\right\}
    \ \le\ \bar L
  \end{equation}
  ensures that $\nabla f$ belongs to the oracle class in Definition~\ref{def:oracle_class}.

  We now derive the time lower bound following \citet{tyurin2023optimal}.
  From \Cref{lem:FT-props} (v) we have that if $\progg_{1}(u)<T$, then $\|\nabla F_T(u)\| > 1$.
  Using the monotonicity of $\progg_\alpha$ in $\alpha$ and $\progg_0(x/\lambda)=\progg_0(x)$, we obtain, for any $x\in\R^T$,
  \begin{align*}
    \progg_0(x) < T
    \;\Longrightarrow\;
    \|\nabla f(x)\|^2
    = \left\|\frac{L\lambda}{\ell_1}\,\nabla F_T(x/\lambda)\right\|^2
    = 2\varepsilon \left\|\nabla F_T(x/\lambda)\right\|^2
    > 2\varepsilon.
  \end{align*}
  Equivalently,
  \begin{equation}
    \label{eq:grad-lb-mss}
    \|\nabla f(x)\|^2 > 2\varepsilon\,\mathbf{1}\!\left\{\progg_0(x)<T\right\}
    \qquad\text{for all }x\in\R^T.
  \end{equation}
  From \Cref{lem:FT-props}(iv), we also have
  \begin{align*}
    \progg_{0}(\nabla f(x))
    &= \progg_{0}\!\left(\frac{L\lambda}{\ell_1}\,\nabla F_T(x/\lambda)\right)
    = \progg_{0}\bigl(\nabla F_T(x/\lambda)\bigr) \\
    &\leq \progg_{1/2}(x/\lambda) + 1
    \leq \progg_{1/4}(x/\lambda) + 1.
  \end{align*}
  Thus, for indices $i>\progg_{1/4}(x/\lambda)+1$ we have $\nabla_i f(x)=0$, and using~\eqref{eq:g-rescaled},
  \begin{align*}
    [\nabla f(x;\xi)]_i = 0,
    \qquad \text{if } i> \progg_{1/4}(x/\lambda)+1.
  \end{align*}
  Moreover, by~\eqref{eq:Theta-bounds}, $\Theta_i(x/\lambda)=1$ for $i = \progg_{1/4}(x/\lambda)+1$, and hence
  \begin{align*}
    [\nabla f(x;\xi)]_i
    = \nabla_i f(x)\,\frac{\xi}{p},
    \qquad \text{if } i = \progg_{1/4}(x/\lambda)+1.
  \end{align*}
  Since $\progg_{1/4}(x/\lambda)\le\progg_0(x)$, the only way to activate a new coordinate
  $i>\progg_0(x)$ is to have $\xi=1$, and this can happen only at $i=\progg_{1/4}(x/\lambda)+1\le \progg_0(x)+1$.
  Therefore, $\nabla f$ is a probability-$p$ zero-chain in the sense of Definition~\ref{def:pzc}.

 Now let $\{x^{(r,k)}\}_{r\ge1,k\in[B]}$ be the query points of a zero-respecting algorithm
$\mathsf{A}\in\cA_{\mathrm{zr}}$ interacting with the oracles under Protocol~\ref{protocol}, and let
$g^{(r,k)} = \nabla f(x^{(r,k)};\xi^{(r)})$ denote the completed stochastic gradients associated with interaction $r$.

  By the zero-respecting property and the probability-$p$ zero-chain structure, the process
  $\max_{s\le r,\, k\in[B]}\progg_0(x^{(s,k)})$ can increase by at most one per completed oracle reply, and this increase
  occurs only when $\xi^{(r)}=1$, which happens with probability at most $p$ (the same $\xi^{(r)}$ is used for all $k\in[B]$).
  Moreover, since $\tau_i(1)\le \tau_i(k)$ for any batch size $k\le B$, replacing the batch-time function by the smaller delay $\tau_i(1)$ can only make the oracles faster. Therefore, any lower bound proved with delays $\tau_i=\tau_i(1)$ also applies to the original model.

  Therefore, all assumptions of Lemma~D.2 in \citet{tyurin2023optimal} are satisfied (with $\textnormal{prog}(x)\equiv\progg_0(x)$ and delays $\tau_i$), and we can invoke it directly. In particular, for any $\delta\in(0,1)$ and any time $t$ satisfying
  \begin{equation}\label{eq:lb-time-raw}
    t\ 
    \le\ \frac{1}{24}\ \left(\frac{T}{2}+\log\frac{1}{\delta}\right) \ 
    \min_{m\in[n]}
    \left(\sum_{i=1}^{m}\frac{1}{\tau_i}\right)^{-1}
    \left(\frac{1}{p}+m\right) ,
  \end{equation}
  we have, with probability at least $1-\delta$, that $\progg_0(x^{(r,k)})<T$ for all queried points $(r,k)$ whose replies have been received by time $t$.
  Combining this with~\eqref{eq:grad-lb-mss} yields
  \[
    \inf_{(r,k)\in S_t}\|\nabla f(x^{(r,k)})\|^2 > 2\varepsilon
    \quad\text{with probability at least }1-\delta,
  \]
  and hence
  \[
    \E{\ \inf_{(r,k)\in S_t}\ \|\nabla f(x^{(r,k)})\|^2\ }
    \ge 2\varepsilon(1-\delta).
  \]
  Choosing $\delta=\tfrac12$ gives
  \[
    \E{ \inf_{(r,k)\in S_t}\ \|\nabla f(x^{(r,k)})\|^2\ }
    > \varepsilon
  \]
  whenever~\eqref{eq:lb-time-raw} holds with $\delta=\tfrac12$.

  Finally, substituting $T$ from \eqref{eq:f-rescaled} and recalling the choice of $L$ in \eqref{eq:L-choice}, we obtain that it is necessary to have
  \begin{align} \label{eq:time-lb-intermediate}
    t\ \geq \frac{1}{24}\ \min_{m\in[n]}
    \left[\left(\sum_{i=1}^{m}\frac{1}{\tau_i}\right)^{-1}
    \left(\frac{1}{p} +m\right)\right]\,
    \left(\frac{1}{2}\left\lfloor
    \frac{\bar L \Delta \sqrt{p}}{2 \Delta_0 \bar \ell_1 \varepsilon}
    \right\rfloor-1\right),
  \end{align}
  to ensure
  \[
    \mathbb{E}\!\left[\ \inf_{(r,k)\in S_t}\ \|\nabla f(x^{(r,k)})\|^2\ \right]
    \leq \varepsilon.
  \]
  As in \citet{arjevani2022lower} we consider the cases
  $
  \frac{\bar L \Delta\sqrt{p}}{2 \Delta_0 \bar \ell_1 \varepsilon} \geq 5
  $
  and
  $
  \frac{\bar L \Delta\sqrt{p}}{2 \Delta_0 \bar \ell_1 \varepsilon} < 5
  $
  separately.
  In the former case, using $\lfloor x \rfloor /2 - 1 \geq x/4$, we get
  \begin{align*}
    t\ 
    &\geq \frac{1}{24}\ \frac{\bar L \Delta \sqrt{p}}{8  \Delta_0 \bar \ell_1 \varepsilon} \
    \min_{m\in[n]}
    \left(\sum_{i=1}^{m}\frac{1}{\tau_i}\right)^{-1}
    \left(\frac{1}{p} +m\right) \\
    &\geq \frac{1}{24}\ \frac{\bar L \Delta \min\{\frac{\sqrt{2\varepsilon} \varsigma }{\sigma},\,1\} }{8  \Delta_0 \bar \ell_1 \varepsilon} \
    \min_{m\in[n]}
    \left(\sum_{i=1}^{m}\frac{1}{\tau_i}\right)^{-1}
    \left( \frac{\sigma^2}{2\varepsilon \varsigma^2 } +m\right) .
  \end{align*}
  Choosing $c' = 1 / (40 \bar{\ell}_1 \Delta_0)$ implies $\varepsilon \leq \frac{\bar{L}\Delta}{8}$, hence the conditions of Lemma~\ref{lem:stat_lower_bound_time} hold, and combining both bounds yields the desired result.

  In the latter case,
  $
    \left(\frac{1}{2}\left\lfloor
    \frac{\bar L \Delta \sqrt{p}}{2 \Delta_0 \bar \ell_1 \varepsilon}
    \right\rfloor-1\right) \leq 1,
  $
  and having 
  $$
    \varepsilon < \frac{\bar{L}\Delta}{40 \bar{\ell}_1 \Delta_0}
  $$
  precludes the option $p = 1$.
  Hence, the right-hand side in \eqref{eq:time-lb-intermediate} is smaller than the lower bound in \Cref{lem:stat_lower_bound_time} up to a universal constant.
  This completes the proof.
\end{proof}
Here we prove the lemma that was used in the proof above.
\begin{lemma}\label{lem:stat_lower_bound_time}
  Assume $\varepsilon \le \nicefrac{\bar L \Delta}{8}$.
  Consider Protocol~\ref{protocol} with $n$ workers and batch-time functions
  $\{\tau_i(\cdot)\}_{i=1}^n$ satisfying Assumption~\ref{ass:batch-time}.
  Write $\tau_i\eqdef \tau_i(1)$ and assume
  $0<\tau_1\le \cdots \le \tau_n$.
  Then there exist functions $\{f_s\}_{s\in\{-1,+1\}}\subset \cF_{\Delta,\bar L}$ and, for each $s\in\{-1,+1\}$,
  a collection of oracles and distributions $((O_1,\dots,O_n),(\cD_1,\dots,\cD_n))\in
  \cO_{\tau_1(\cdot),\dots,\tau_n(\cdot)}^{\sigma^2,\bar L, B}(f_s)$
  such that for every algorithm $\mathsf A\in\cA$ run under the protocol,
  for any time $t\ge 0$,
  \begin{align}
    \max_{s\in\{-1,+1\}}
    \mathbb{E}_s\!\left[\ \inf_{(r,k)\in S_t}\ \|\nabla f_s(x^{(r,k)})\|^2\ \right]
    \;\ge\;
    \min\!\left\{\frac{\sigma^2}{64\,N(t)},\ \frac{\bar L\Delta}{8}\right\},
    \label{eq:stat_time_lb_sq_main}
  \end{align}
  where $\mathbb{E}_s$ denotes expectation under instance $s$, $S_t$ is the set of query-point indices whose corresponding gradients have been returned by time $t$, and
  \begin{align}
    N(t)\ \eqdef\ \sum_{i=1}^n \left\lfloor \frac{t}{\tau_i}\right\rfloor ,
    \label{eq:Nt_def}
  \end{align}
  with the convention $\sigma^2/(64\,N(t))\eqdef +\infty$ when $N(t)=0$.
  Consequently, if $t$ satisfies $N(t) \le \frac{\sigma^2}{64\varepsilon}$, then
  \begin{align}
    \max_{s\in\{-1,+1\}}
    \mathbb{E}_s\!\left[\ \inf_{(r,k)\in S_t}\ \|\nabla f_s(x^{(r,k)})\|^2\ \right]
    \;\ge\;\varepsilon .
    \label{eq:stat_time_lb_sq}
  \end{align}
  In particular, any algorithm satisfying
  $$
    \max_s \mathbb{E}_s\!\left[ \inf_{(r,k)\in S_t}\|\nabla f_s(x^{(r,k)})\|^2\right]\le \varepsilon
  $$
  must have
  \begin{align}
    t\ \ge\ c_0\min_{m\in[n]} \left(\sum_{i=1}^{m}\frac{1}{\tau_i}\right)^{-1}
    \left(\frac{\sigma^2}{\varepsilon}+m\right) ,
    \label{eq:stat_time_lb_final}
  \end{align}
  for some absolute constant $c_0$.
\end{lemma}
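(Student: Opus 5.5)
We will prove \Cref{lem:stat_lower_bound_time} with a two-point (Le Cam) construction on a one-dimensional linear-plus-quadratic-free function, and then convert the resulting sample-count bound into a time bound.

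\textbf{Construction.} Fix a unit direction $e_1$ and, for $s\in\{-1,+1\}$, let $f_s(x)=s\,\delta\,\langle e_1,x\rangle$ restricted to a region where it is bounded. To stay in $\cF_{\Delta,\bar L}$ we instead take a smooth Huber-type profile $f_s(x)=s\,\delta\,h(x_1)$, where $h$ is $1$-Lipschitz with $h'(u)=1$ for $|u|\le R$, $h$ is $1/R$-smooth, and $h$ has range at most $2R$. Choosing $R=\delta/\bar L$ makes $f_s$ $\bar L$-smooth with $f_s(0)-\inf f_s\le 2\delta R=2\delta^2/\bar L$. This is at most $\Delta$ whenever $\delta^2\le \bar L\Delta/2$.

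The stochastic gradient is $\nabla f_s(x;\xi)=\xi\,h'(x_1)\,e_1$ with $\xi\in\{\pm a\}$ and $\mathbb P(\xi=a)=\tfrac12(1+s\delta/a)$. This oracle is unbiased. Its variance is at most $a^2\le\sigma^2$ if we take $a=\sigma$ (assuming $\delta\le\sigma$). It is also mean-squared smooth with constant $\bar L$: since $|\xi|=\sigma$, this requires $\sigma/R\le\bar L$. If that condition fails, we use the scaled noise $\xi\,h'(x_1)$ with the factor chosen compatibly; we expect to need $\delta\le\sigma$ in any case. The key property is that on the region $\{|x_1|\le R\}$ we have $\|\nabla f_s\|^2=\delta^2$ regardless of $x$, and outside this region the gradient norm is still at least $\delta\cdot\min\{1,\cdot\}$. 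So if the algorithm guesses $s$ wrongly, it gains nothing, and we get the bound $\inf\|\nabla f_s\|^2\ge\delta^2$ outright, independent of identification. This is where we must be careful. A linear function has constant gradient norm, so the lower bound is trivially $\min_x\|\nabla f_s(x)\|^2$ over the region where $h'\neq0$. A cleaner choice is therefore $f_s(x)=\tfrac{\bar L}{2}(x_1-s r)^2$ clipped. Its gradient at $x$ is small only near $s r$, so an $\varepsilon$-stationary output must identify the sign of $s$.

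\textbf{Information argument.} With this quadratic choice, each returned reply carries at most one sample $\xi$ (shared across the batch, by the oracle model \eqref{eq:time_k-batch-oracle-variable}). Hence the number of samples observed by time $t$ is at most $N(t)=\sum_i\lfloor t/\tau_i\rfloor$, because worker $i$ completes at most $\lfloor t/\tau_i\rfloor$ replies and $\tau_i(b)\ge\tau_i(1)$ by \Cref{ass:batch-time}. The sample distributions under $s=\pm1$ have KL divergence $O(\delta^2/\sigma^2)$ per sample. By Pinsker's inequality and Le Cam's method, if $N(t)\,\delta^2/\sigma^2\le c$, then any estimator of $s$ errs with probability at least $1/4$ under one of the two instances. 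On that error event, every queried point lies closer to the wrong minimizer, which forces $\|\nabla f_s\|^2\gtrsim\delta^2$. We then set $\delta^2=\min\{\sigma^2/(64N(t)),\bar L\Delta/8\}$, with the second term enforcing membership in the function class. This yields \eqref{eq:stat_time_lb_sq_main}, and \eqref{eq:stat_time_lb_sq} follows immediately under $\varepsilon\le\bar L\Delta/8$.

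\textbf{Time conversion.} If $N(t)\le\sigma^2/(64\varepsilon)$, the algorithm fails. Conversely, success requires $\sum_i\lfloor t/\tau_i\rfloor\ge\sigma^2/(64\varepsilon)$. Let $m$ be the number of indices with $\tau_i\le t$. Then $\sigma^2/(64\varepsilon)\le t\sum_{i\le m}1/\tau_i$, and also $t\ge\tau_m\ge(\sum_{i\le m}1/\tau_i)^{-1}m/m\cdot$, i.e.\ $m\le t\sum_{i\le m}1/\tau_i$. Adding these two inequalities gives $t\ge\tfrac12(\sum_{i\le m}1/\tau_i)^{-1}(\sigma^2/(64\varepsilon)+m)$, which is at least the minimum over $m$, giving \eqref{eq:stat_time_lb_final}.

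\textbf{Main obstacle.} The main obstacle is the joint calibration of the instance so that three requirements hold simultaneously: mean-squared smoothness $\bar L$ with noise variance $\le\sigma^2$, a gap $\Delta$, and a gradient that is necessarily of size $\delta$ under misidentification. Our first attempt will be $f_s(x)=\tfrac{\bar L}{2}x_1^2-s\delta x_1$, with noise placed only on the linear term, $\nabla f_s(x;\xi)=\bar L x_1e_1-\xi e_1$. This noise is $x$-independent, so the oracle is trivially mean-squared smooth with constant $\bar L$, and the gap is $\delta^2/(2\bar L)\le\Delta$. The remaining difficulty is that the algorithm may query points where $\|\nabla f_s\|^2$ is small for both $s$. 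This cannot happen here, because $|\bar L x_1-\delta|$ and $|\bar L x_1+\delta|$ cannot both be below $\delta$. The bound therefore follows once $x$ is measurable with respect to the observed samples.
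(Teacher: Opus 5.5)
Your statistical construction is essentially the paper's. The family $f_s(x)=\frac{\bar L}{2}x_1^2-s\delta x_1$, with $x$-independent noise on the linear term, is the paper's $f_s=\frac{\bar L}{2}\|x-\theta_s\|^2$ with $\nabla f(x;\xi)=\bar L(x-\xi e_1)$, $\xi\sim\mathcal N(rs,\sigma^2/\bar L^2)$, $r=\delta/\bar L$, up to inactive coordinates and a constant. Like the paper, you bound the information by one sample per completed reply (at most $N(t)$ samples, using $\tau_i(b)\ge\tau_i(1)$), then apply KL plus Pinsker with $\delta^2=\min\{\sigma^2/(64N(t)),\bar L\Delta/8\}$.

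Your time conversion is different, and it is valid and simpler. Grant the sample-count bound. Then success forces $\sigma^2/(64\varepsilon)<N(t)\le t\sum_{i\le m}1/\tau_i$, where $m=\#\{i:\tau_i\le t\}\ge 1$, and also $m\le t\sum_{i\le m}1/\tau_i$. Adding these gives the final time bound with $c_0=1/128$. You need neither Lemma~D.7 of \citet{tyurin2023optimal} nor the separate case $S_\varepsilon<1/4$.

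The gap is the step from misidentifying $s$ to a lower bound on the criterion. You assert that on the error event every queried point lies closer to the wrong minimizer. You then close by noting that $|\bar Lx_1-\delta|$ and $|\bar Lx_1+\delta|$ cannot both be below $\delta$. That controls a single point, but the lemma's quantity is $\inf_{(r,k)\in S_t}\|\nabla f_s(x^{(r,k)})\|^2$, an infimum over all returned query points. An algorithm that knows the pair can put both $\pm(\delta/\bar L)e_1$ into its first batch, or into two unit batches. Once those replies return, the infimum is $0$ for both $s$, however little it has learned about $s$.

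The paper's proof takes the same step, namely $A_1+A_{-1}\ge\inf_x(\|\nabla f_1(x)\|+\|\nabla f_{-1}(x)\|)$. That inequality holds only if $A_1$ and $A_{-1}$ are attained at a common point, so following the paper will not repair it. In fact no pair $\{f_{\pm1}\}\subset\cF_{\Delta,\bar L}$ can satisfy the lemma's first inequality for all $t$. Every such $f$ is bounded below and smooth, so it has points with arbitrarily small gradient, and the algorithm may query those points directly.

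A sound argument needs one of two changes:
\begin{itemize}
\item Measure stationarity at a single designated output point, which is the convention of \citet{arjevani2022lower}. Under that convention your pointwise observation does finish the Le Cam argument.
\item Use a large hard family, e.g.\ $f_\theta(x)=\frac{\bar L}{2}\|x-\theta\|^2$ with $\theta$ uniform on $\{\pm r/\sqrt d\}^d$ and isotropic Gaussian noise of total variance $\sigma^2$. When $r\bar L\sqrt{N(t)}\lesssim\sigma$, a constant fraction of the signs of $\theta$ stay uncertain given the transcript. So each transcript-measurable point lies within $c\,r$ of $\theta$ with probability $e^{-\Omega(d)}$, and a union bound over the at most $B\,N(t)$ points of $S_t$ works once $d\gg\log(B\,N(t))$.
\end{itemize}

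Two smaller issues. You never fix the noise law of your final construction; a Gaussian, as in the paper, avoids the restriction $\delta\le\sigma$. Also, an error probability of $1/4$ with a $\gtrsim\delta^2$ penalty does not produce the explicit constants $1/64$ and $1/8$. The paper obtains them from $r=\min\{\sigma/(2\bar L\sqrt{N(t)}),\sqrt{2\Delta/\bar L}\}$, $\mathbb{E}[A_S]\ge r\bar L/4$, and Jensen.
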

\begin{proof}
The argument is the same as \citet[Lemma~11]{arjevani2022lower}, with the sample budget
$T$ replaced by the maximal number of independent oracle draws available by time $t$, namely $N(t)$.

Fix $r\in\bigl(0,\sqrt{2\Delta/\bar L}\bigr)$ and define, for $s\in\{-1,+1\}$,
\begin{align*}
P_\xi^{\,s}\ \eqdef\ \mathcal N\!\left(rs,\frac{\sigma^2}{\bar L^2}\right),
\qquad f(x,\xi)\ \eqdef\ \frac{\bar L}{2}\Bigl(\|x\|^2-2\xi x_1+r^2\Bigr),
\qquad f_s(x)\ \eqdef\ \mathbb{E}_{\xi\sim P_\xi^{\,s}}[f(x,\xi)].
\end{align*}
Let $\theta_s\eqdef (rs,0,\dots,0)$. Then
\begin{align*}
f_s(x)
=\frac{\bar L}{2}\|x-\theta_s\|^2,
\end{align*}
so $f_s$ is $\bar L$--smooth and
$f_s(0)-\inf_x f_s(x)=\frac{\bar L}{2}r^2\le \Delta$, hence $f_s\in\cF_{\Delta,\bar L}$.

For each worker $i\in[n]$, set $\cD_i\eqdef P_\xi^{\,s}$ and let the stochastic gradient mapping be
$\nabla f(x;\xi) \eqdef \nabla_x f(x,\xi)=\bar L(x-\xi e_1)$, so that
\begin{align*}
  &\ExpSub{\xi}{\nabla f(x;\xi)} = \nabla f_s(x), \\
  &\ExpSub{\xi}{ \sqnorm{ \nabla f(x;\xi)-\nabla f_s(x) } } = \sigma^2, \\
  &\ExpSub{\xi}{ \sqnorm{ \nabla f(x;\xi)-\nabla f(y;\xi) } } = \bar L^2 \sqnorm{ x-y }.
\end{align*}
Thus, the corresponding oracles $O_i=O_{\tau_i(\cdot), B}^{f}$ (defined in \eqref{eq:time_k-batch-oracle-variable})
belong to $\cO_{\tau_1(\cdot),\dots,\tau_n(\cdot)}^{\sigma^2, \bar L, B}(f_s)$.

In \eqref{eq:time_k-batch-oracle-variable}, each \emph{completed} reply uses a single sample $\xi\sim\cD_i$ shared across the
entire returned batch.
Hence, a completed reply contributes at most one independent sample, regardless of the chosen batch size
$b\le B$. Since $\tau_i(\cdot)$ is nondecreasing (Assumption~\ref{ass:batch-time}), using $b>1$ cannot increase the number of independent samples received by time $t$ and can only increase completion times.
Therefore, for the purpose of lower bounding the obtainable information by time $t$, we may restrict attention to unit batches and work with $\tau_i=\tau_i(1)$, which yields the sample budget $N(t)$ in \eqref{eq:Nt_def}.

Let $S$ be uniform on $\{-1,+1\}$, indicating which instance is selected.
Conditioned on $S=s$, each completed reply reveals one independent draw from $P_\xi^{\,s}$ (indeed, from any returned gradient
$g=\bar L(x-\xi e_1)$ we recover $\xi=x_1-g_1/\bar L$ exactly).
By time $t$, worker $i$ can complete at most
$\lfloor t/\tau_i\rfloor$ such replies, hence the algorithm can receive at most $N(t)$ independent samples.

Define, for $s\in\{-1,+1\}$,
$$
  A_s \ \eqdef\ \inf_{(r,k)\in S_t}\ \norm{ \nabla f_s(x^{(r,k)}) }.
$$
Define
\begin{align}
  \hat S\ \eqdef\ \argmin_{s'\in\{-1,+1\}} A_{s'},
  \quad\text{with ties broken arbitrarily.}
  \label{eq:Shat_def}
\end{align}
If $\hat S\neq S$, then $A_{\hat S}\le A_S$, and hence
\begin{align*}
  2A_S \ge\ A_{1}+A_{-1}
  &\geq \inf_{x \in \mathbb{R}^d}\left(\|\nabla f_{1}(x)\|+\|\nabla f_{-1}(x)\|\right)\\
  &= \bar{L} \inf_{x \in \mathbb{R}^d}\left(\|x-\theta_1\|+\|x-\theta_{-1}\|\right)
  \geq \bar{L} \|\theta_1-\theta_{-1}\| = 2r\bar{L} ~.
\end{align*}
Hence, $A_S\ge r\bar L$ whenever $\hat S\neq S$, and therefore
\begin{align}
  \E{A_S}
  \ \ge\ r\bar L\cdot \mathbb{P}(\hat S\neq S),
  \label{eq:basic_markov}
\end{align}
where $\mathbb{P}$ is over the randomness of $S$, the oracle, and the algorithm.

Write $\mathbb{P}_s$ for the law of the information available by time $t$ under instance $s$.
Any estimator of $S$ based on this information has error at least
$$
  \mathbb{P}(\hat S\neq S)
  \ \ge\ \frac12\Bigl(1-\|\mathbb{P}_1-\mathbb{P}_{-1}\|_{\mathrm{TV}}\Bigr).
$$
By Pinsker's inequality,
$$
  \|\mathbb{P}_1-\mathbb{P}_{-1}\|_{\mathrm{TV}}
  \ \le\ \sqrt{\frac12 D_{\mathrm{KL}}(\mathbb{P}_1\|\mathbb{P}_{-1})} ~.
$$
Since the transcript by time $t$ is a measurable function of at most $N(t)$ i.i.d.\ samples
$\xi_1,\dots,\xi_{N(t)}$ with $\xi_j\sim P_\xi^{\,s}$ under instance $s$, the data-processing inequality yields
$$
  D_{\mathrm{KL}}(\mathbb{P}_1\|\mathbb{P}_{-1})
  \ \le\ D_{\mathrm{KL}}\!\left((P_\xi^{\,1})^{\otimes N(t)} \,\big\|\, (P_\xi^{\,-1})^{\otimes N(t)}\right)
  = N(t)\,D_{\mathrm{KL}}(P_\xi^{\,1}\|P_\xi^{\,-1}) ~.
$$
Moreover,
$$
  D_{\mathrm{KL}}(P_\xi^{\,1}\|P_\xi^{\,-1})
  =
  D_{\mathrm{KL}}\!\left(\mathcal N\!\left(r,\frac{\sigma^2}{\bar L^2}\right)
  \Big\| \ \mathcal N \left(-r,\frac{\sigma^2}{\bar L^2}\right)\right)
  = \frac{2r^2\bar L^2}{\sigma^2} ~.
$$
Thus,
$$
  \mathbb{P}(\hat S\neq S)\ \ge\ \frac12\left(1-\frac{r\bar L\sqrt{N(t)}}{\sigma}\right),
$$
and by \eqref{eq:basic_markov},
$$
  \mathbb{E}[A_S]
  \ \ge\ \frac{r\bar L}{2}\left(1-\frac{r\bar L\sqrt{N(t)}}{\sigma}\right).
$$
Now set
$$
  r\ \eqdef\ \min\left\{\frac{\sigma}{2\bar L\sqrt{N(t)}},\ \sqrt{\frac{2\Delta}{\bar L}}\right\},
$$
interpreting $\sigma/(2\bar L\sqrt{N(t)})\eqdef +\infty$ when $N(t)=0$.
Then $r\le \sqrt{2\Delta/\bar L}$ ensures $f_s(0)-\inf f_s\le \Delta$, and
$r\le \sigma/(2\bar L\sqrt{N(t)})$ makes the parenthesis at least $1/2$, so
$$
  \mathbb{E}[A_S]
  \ \ge\ \frac{r\bar L}{4}
  = \min\!\left\{\frac{\sigma}{8\sqrt{N(t)}},\ \sqrt{\frac{\bar L\Delta}{8}}\right\}.
$$
Since $\mathbb{E}[A_S]=\frac12\sum_{s\in\{\pm1\}}\mathbb{E}_s[A_s]$, we get
$$
  \max_{s\in\{\pm1\}}\mathbb{E}_s\!\left[\inf_{(r,k)\in S_t}\|\nabla f_s(x^{(r,k)})\|\right]
  \ \ge\ \min\!\left\{\frac{\sigma}{8\sqrt{N(t)}},\ \sqrt{\frac{\bar L\Delta}{8}}\right\}.
$$
Finally, for each $s$, Jensen's inequality yields
$$
  \mathbb{E}_s\!\left[\inf_{(r,k)\in S_t}\|\nabla f_s(x^{(r,k)})\|^2\right]
  =
  \ExpSub{s}{A_s^2}
  \ \ge\ \left( \ExpSub{s}{A_s} \right)^2,
$$
and squaring the previous bound gives \eqref{eq:stat_time_lb_sq_main}. The implication \eqref{eq:stat_time_lb_sq} follows
immediately from $\varepsilon \le \nicefrac{\bar L\Delta}{8}$ and $N(t)\le \sigma^2/(64\varepsilon)$.

It remains to obtain \eqref{eq:stat_time_lb_final}. Set $S_\varepsilon \eqdef \sigma^2/(64\varepsilon)$.
If $S_\varepsilon < \frac14$, then $\varepsilon > \sigma^2/16$ and any algorithm with
$$
  \max_s \mathbb{E}_s\!\left[\inf_{(r,k)\in S_t}\|\nabla f_s(x^{(r,k)})\|^2\right]\le \varepsilon
$$
must have $S_t\neq\emptyset$, hence $t\ge \tau_1$.
Moreover,
$$
  \frac{1}{384}\min_{m\in[n]} \left(\sum_{i=1}^{m}\frac{1}{\tau_i}\right)^{-1}
  \left(\frac{\sigma^2}{\varepsilon}+m\right)
  \ \le\ \frac{1}{384}\,\tau_1\left(\frac{\sigma^2}{\varepsilon}+1\right)
  \ <\ \tau_1,
$$
so \eqref{eq:stat_time_lb_final} holds.

Assume now that $S_\varepsilon\ge \frac14$.
Define $\tau_{n+1}\eqdef \infty$ and
$$
  j_\varepsilon^\star \ \eqdef\ \inf\left\{m\in[n] \,\middle|\, S_\varepsilon\left(\sum_{i=1}^{m}\frac{1}{\tau_i}\right)^{-1} < \tau_{m+1}\right\}.
$$
Define
$$
  t_1 \ \eqdef\ S_\varepsilon\left(\sum_{i=1}^{j_\varepsilon^\star}\frac{1}{\tau_i}\right)^{-1},
  \qquad
  t_2 \ \eqdef\ \min_{m\in[n]} \left(\sum_{i=1}^{m}\frac{1}{\tau_i}\right)^{-1}\left(S_\varepsilon+m\right).
$$
By \citep[Lemma~D.7]{tyurin2023optimal} (applied with $S=S_\varepsilon$), we have $t_1\le t_2\le 6t_1$.
Moreover, $t_1<\tau_{j_\varepsilon^\star+1}$, and hence $\lfloor t_1/\tau_i\rfloor=0$ for all $i\ge j_\varepsilon^\star+1$, so
$$
  N(t_1)
  =\sum_{i=1}^{j_\varepsilon^\star}\left\lfloor \frac{t_1}{\tau_i}\right\rfloor
  \le \sum_{i=1}^{j_\varepsilon^\star}\frac{t_1}{\tau_i}
  =t_1\sum_{i=1}^{j_\varepsilon^\star}\frac{1}{\tau_i}
  =S_\varepsilon.
$$
Since $N(\cdot)$ is nondecreasing, for any $t\le t_2/6$ we have $t\le t_1$ and thus
$N(t)\le N(t_1)\le S_\varepsilon$.
Therefore, any algorithm with $\max_s \mathbb{E}_s[\inf_{k\in S_t}\|\nabla f_s(x^{(k)})\|^2]\le \varepsilon$
must satisfy $t\ge t_2/6$, i.e.
$$
  t\ \ge\ \frac{1}{6}\min_{m\in[n]} \left(\sum_{i=1}^{m}\frac{1}{\tau_i}\right)^{-1}
  \left(\frac{\sigma^2}{64\varepsilon}+m\right) .
$$
Finally, since $\frac{\sigma^2}{64\varepsilon} + m \ge \frac{1}{64}\left(\frac{\sigma^2}{\varepsilon}+m\right)$ for all $m\in[n]$, we obtain \eqref{eq:stat_time_lb_final}.
\end{proof}

\newpage
\section{Additional experiments}
\label{add_experiments}
\begin{figure}[t] 

    \centering
    \begin{subfigure}[t]{0.32\textwidth}
        \centering
        \includegraphics[width=\textwidth]{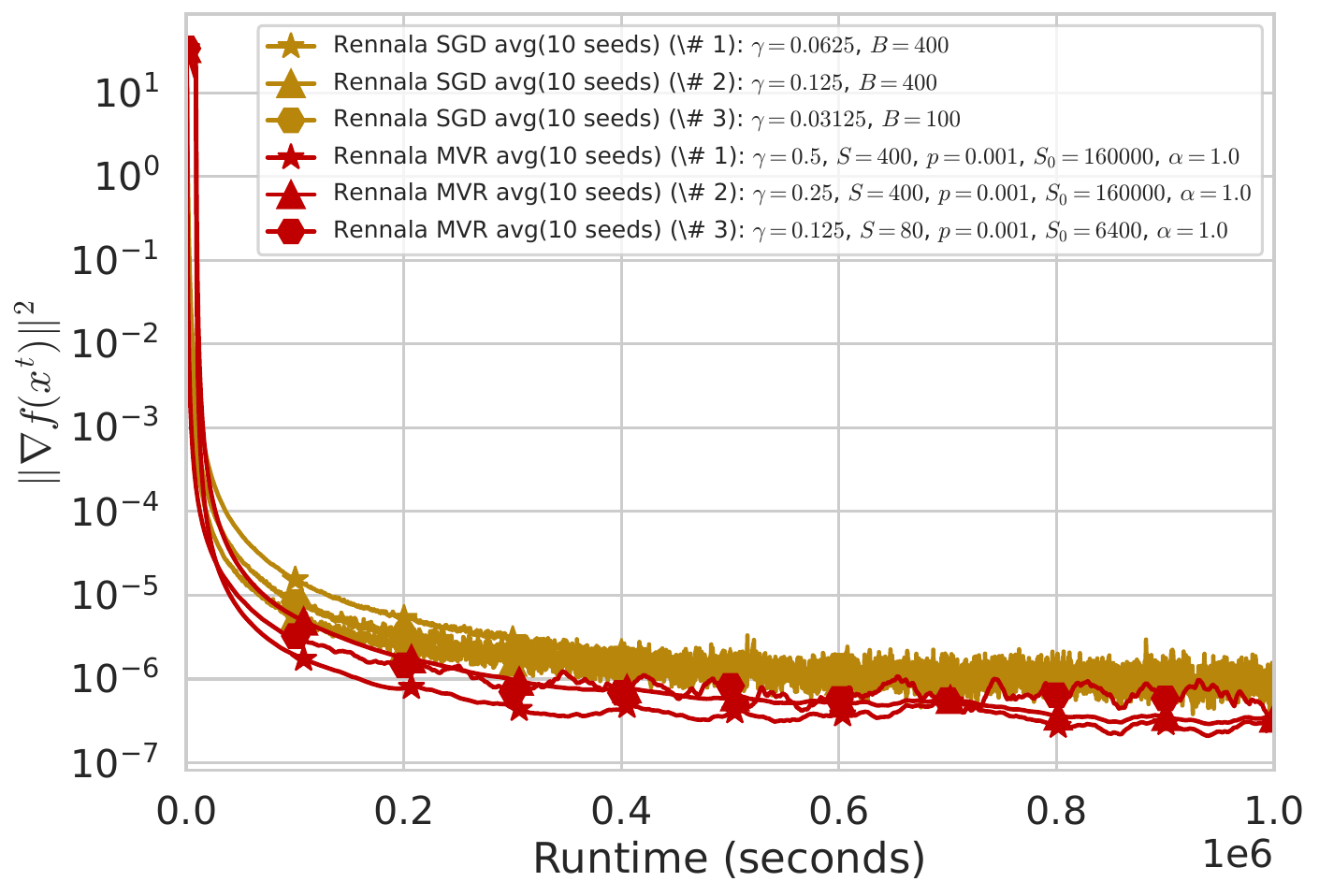}
        \caption{$\tau_i=\sqrt{i}$}
    \end{subfigure}
    \hfill
    \begin{subfigure}[t]{0.32\textwidth}
        \centering
        \includegraphics[width=\textwidth]{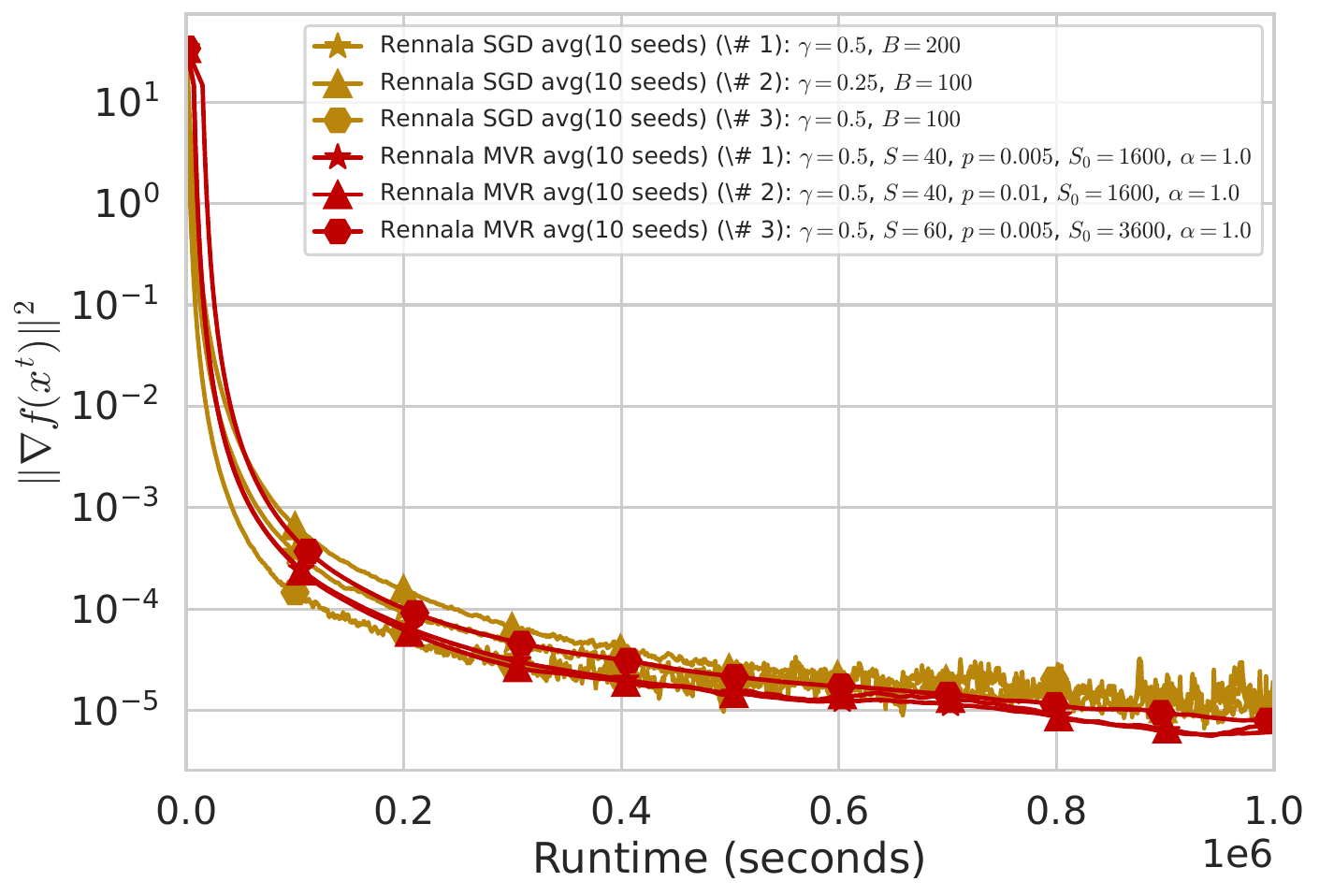}
        \caption{Mixture delays}
    \end{subfigure}
    \hfill
    \begin{subfigure}[t]{0.32\textwidth}
        \centering
        \includegraphics[width=\textwidth]{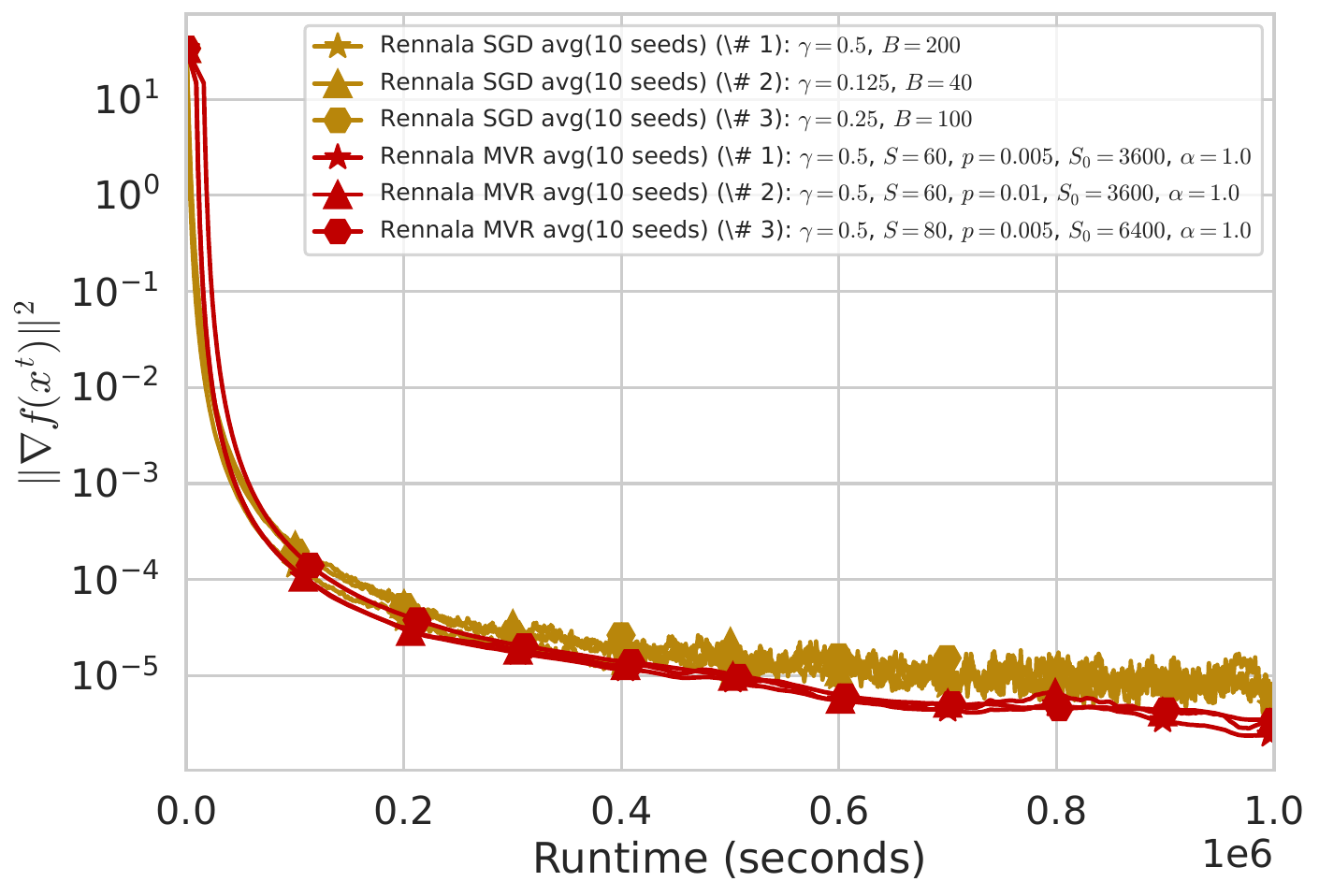}
        \caption{Uniform delays}
    \end{subfigure}
    \caption{Comparison of \rennalamvr and \rennala on the stochastic quadratic benchmark with $100$ workers under three delay models.}
    \label{fig:quadratic_100}
\end{figure}
We additionally performed experiments on the same stochastic quadratic benchmark with a larger number of workers, namely \(n=100\).
The corresponding plots in \Cref{fig:quadratic_100} show the same qualitative trend as in the \(n=10\) setting: \rennalamvr consistently outperforms \rennala across all considered delay models. 

We also examine the sensitivity of \rennalamvr to its additional hyperparameters. For the quadratic benchmark, the only extra parameter is \(p\). The heatmaps in Figure~\ref{fig:quadratic_sensitivity} show that, over the tested range, the method is relatively insensitive to the precise choice of \(p\). This is encouraging from a practical perspective, since it suggests that the additional flexibility of \rennalamvr does not translate into a substantial tuning burden.

We perform a similar analysis for the neural-network experiments, where the inexact variant introduces both \(p\) and the scaling parameter \(\alpha\). We first plot the full heatmaps for fixed values of \(\alpha\); see Figure~\ref{fig:nn_sensitivity_full}. These plots exhibit a pattern similar to that observed in the quadratic case, namely a relatively weak dependence on \(p\). We therefore additionally minimize over \(p\) and plot the resulting heatmaps as functions of the stepsize and \(\alpha\); see Figure~\ref{fig:nn_sensitivity_minimized}. The resulting plots indicate that, within the tested range, the method is also relatively insensitive to the choice of \(\alpha\), which further supports its practical viability.

\begin{figure*}[h]
    \centering
    \includegraphics[width=\textwidth]{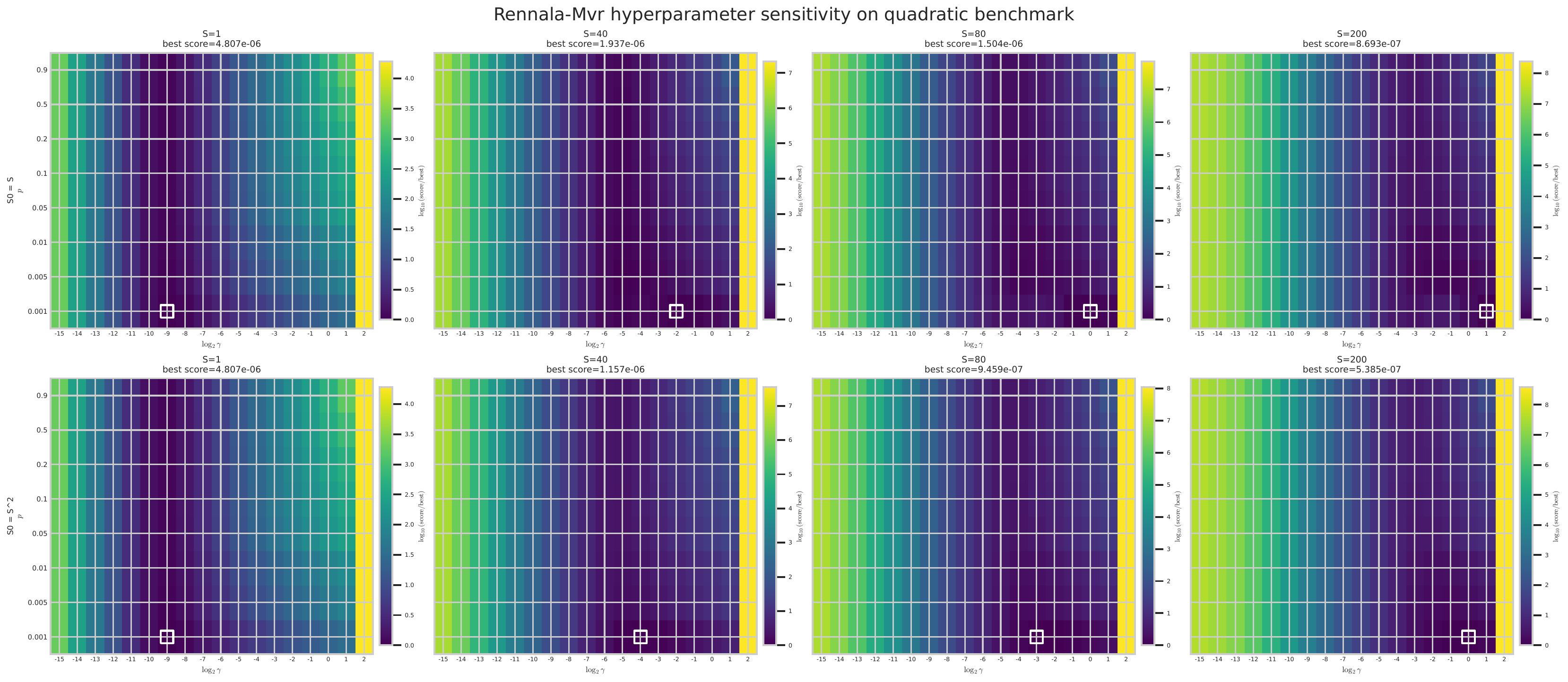}
    \caption{Sensitivity of exact \rennalamvr on the stochastic quadratic benchmark under square-root delays \(\tau_i=\sqrt{i}\). Each heatmap shows the performance criterion over the \((\gamma,p)\) grid for a fixed choice of \(B\) and \(B_0\).}
    \label{fig:quadratic_sensitivity}
\end{figure*}

\begin{figure*}[t]
    \centering
    \begin{subfigure}[t]{0.49\textwidth}
        \centering
        \includegraphics[page=1,width=\textwidth]{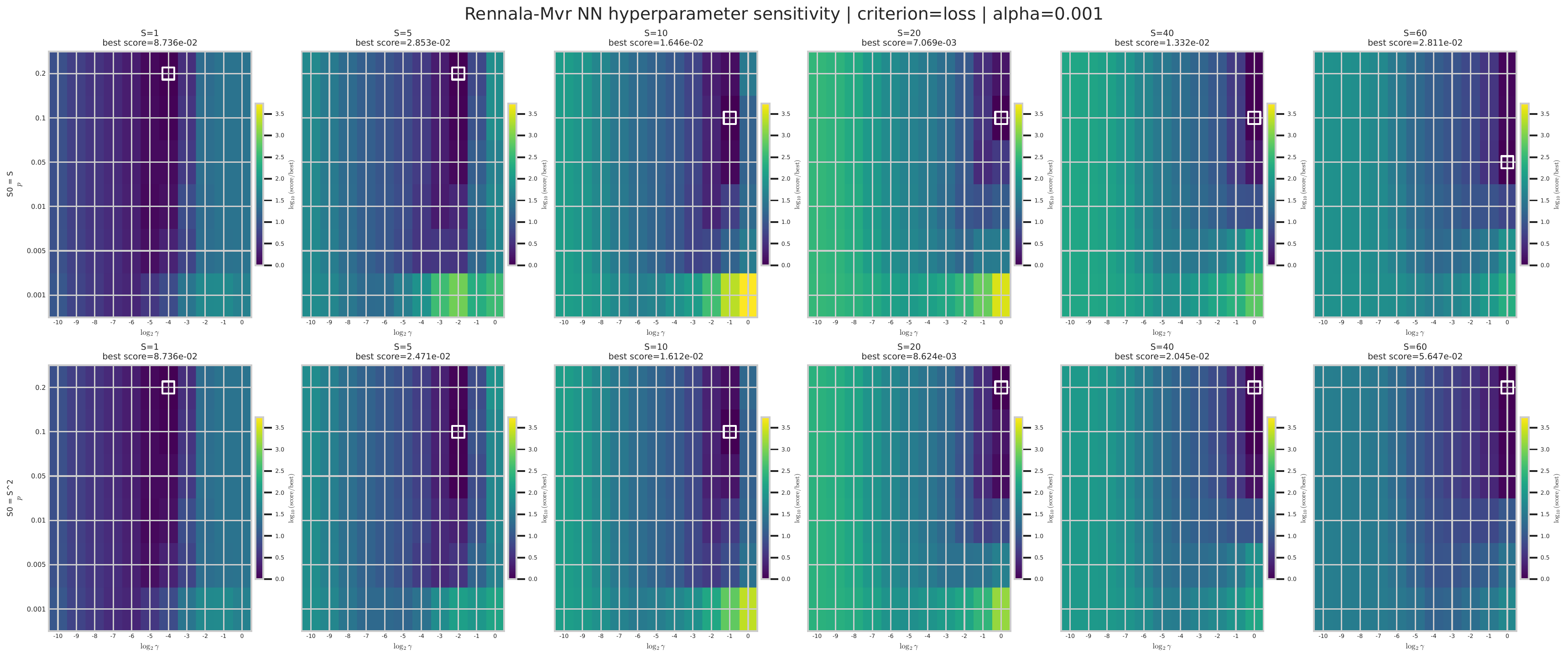}
        \caption{$\alpha=0.001$}
    \end{subfigure}\hfill
    \begin{subfigure}[t]{0.49\textwidth}
        \centering
        \includegraphics[page=2,width=\textwidth]{plots/sensitivity/nn_mvr_ablation_heatmaps.pdf}
        \caption{$\alpha=0.005$}
    \end{subfigure}

    \vspace{0.5em}

    \begin{subfigure}[t]{0.49\textwidth}
        \centering
        \includegraphics[page=3,width=\textwidth]{plots/sensitivity/nn_mvr_ablation_heatmaps.pdf}
        \caption{$\alpha=0.01$}
    \end{subfigure}\hfill
    \begin{subfigure}[t]{0.49\textwidth}
        \centering
        \includegraphics[page=4,width=\textwidth]{plots/sensitivity/nn_mvr_ablation_heatmaps.pdf}
        \caption{$\alpha=0.025$}
    \end{subfigure}

   \caption{Sensitivity of inexact \rennalamvr on asynchronous neural-network training under square-root delays \(\tau_i=\sqrt{i}\). Each panel corresponds to a fixed value of \(\alpha\) and shows the performance over the \((\gamma,p)\) grid for different choices of \(B\) and \(B_0\).}
    \label{fig:nn_sensitivity_full}
\end{figure*}

\begin{figure*}[h]
    \centering
    \includegraphics[width=\textwidth]{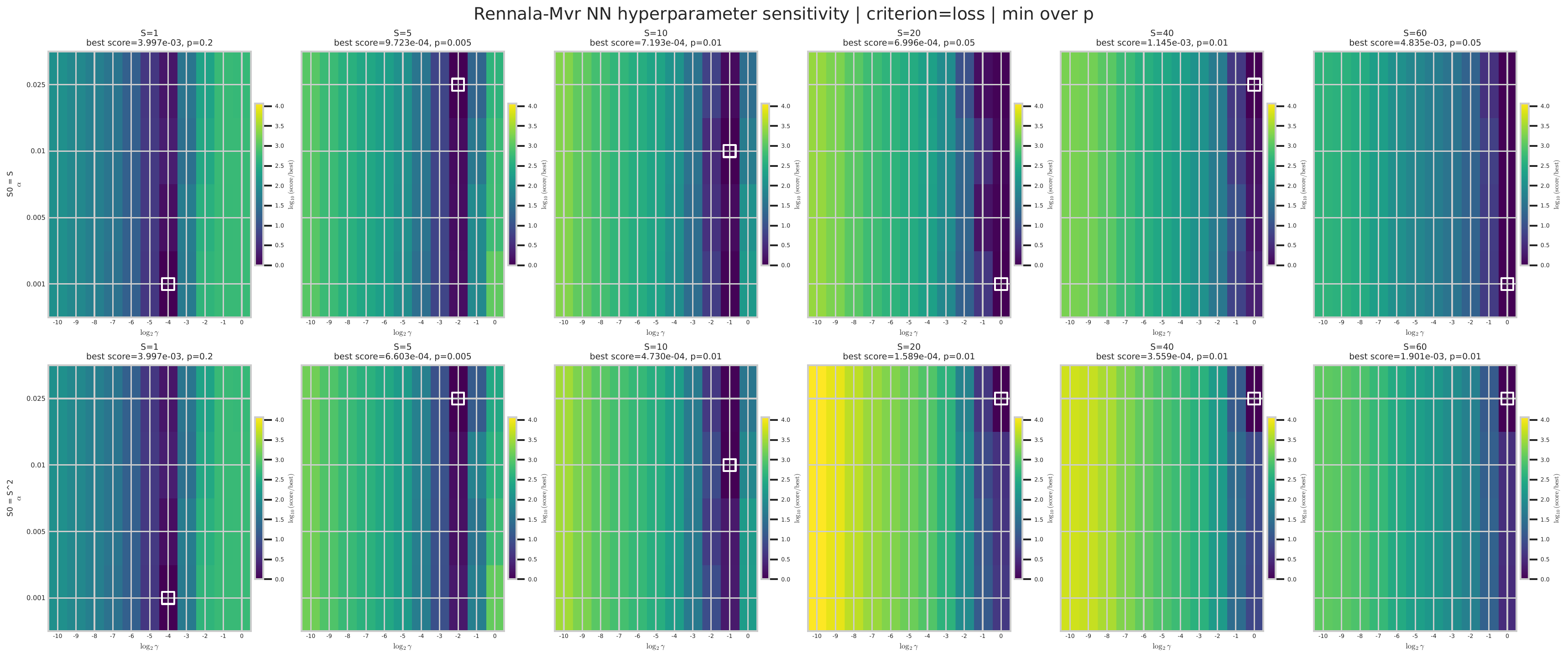}
  \caption{Sensitivity of inexact \rennalamvr on asynchronous neural-network training under square-root delays \(\tau_i=\sqrt{i}\) after minimizing over \(p\). Each heatmap shows the performance as a function of the stepsize and \(\alpha\) for a fixed choice of \(B\) and \(B_0\).}
      \label{fig:nn_sensitivity_minimized}
\end{figure*}

\end{document}

%% file: macros.tex
\usepackage{microtype}
\usepackage{graphicx}
\usepackage{subcaption}
\usepackage{booktabs} 
\usepackage{apptools}
\usepackage[flushleft]{threeparttable}
\usepackage{array,booktabs,makecell}
\usepackage{multirow}
\usepackage{nicefrac}
\usepackage{amsthm}
\usepackage{mathtools}
\usepackage{amsmath,amsfonts,bm,amssymb}
\usepackage{dsfont}
\usepackage{wrapfig}
\usepackage{caption}
\usepackage{siunitx}
\usepackage{thm-restate}
\usepackage{nccmath}
\usepackage{empheq}
\usepackage{bbm}
\usepackage{tabularx}

\usepackage{algorithmic}
\usepackage{algorithm}
\usepackage{filecontents}

\makeatletter
\newenvironment{protocol}[1][htb]{%
    \renewcommand{\ALG@name}{Protocol}
   \begin{algorithm}[#1]%
  }{\end{algorithm}}
\makeatother

\usepackage[capitalize,noabbrev]{cleveref}

\usepackage{color}
\usepackage{colortbl}
\definecolor{bgcolor}{rgb}{0.76,0.88,0.50}
\definecolor{bgcolor0}{rgb}{0.93,0.99,1}
\definecolor{bgcolor1}{rgb}{0.8,1,1}
\definecolor{bgcolor2}{rgb}{0.8,1,0.8}
\definecolor{bgcolor3}{rgb}{0.50,0.90,0.50}
\usepackage{tcolorbox}
\usepackage{pifont}

\definecolor{mydarkblue}{rgb}{0,0.16,0.54}
\definecolor{mydarkgreen}{RGB}{39,130,67}
\definecolor{mydarkorange}{RGB}{236,147,14}
\definecolor{mydarkgreen}{RGB}{0,100,0}
\definecolor{ruby}{RGB}{155,17,30}
\definecolor{chili}{RGB}{191,0,0}
\definecolor{sangria}{RGB}{146,0,10}
\definecolor{burgundy}{RGB}{128,0,32}
\definecolor{darkred}{RGB}{132,0,0} 
\definecolor{cherry}{RGB}{192,0,0} 
\definecolor{myaccent}{rgb}{0,0.45,0.45}
\definecolor{anotherdarkred}{rgb}{0.45,0,0.05}

\newcommand{\orange}{\color{mydarkorange}}

\newcommand{\red}{\color{cherry}}

\usepackage{xspace}
\usepackage{relsize}

\newcommand{\norm}[1]{\left\| #1 \right\|}
\newcommand{\sqnorm}[1]{\left\| #1 \right\|^2}

\newcommand{\R}{\mathbb{R}} 
\newcommand{\E}[1]{\mathbb{E}\left[#1\right]}

\newcommand{\progg}{\mathrm{prog}}

\newcommand{\ExpSub}[2]{{\mathbb{E}}_{#1}\left[#2\right]}
\newcommand{\ExpCond}[2]{{\mathbb{E}}\left[\left.#1\ \right\vert\ #2\right]}

\newcommand{\cA}{\mathcal{A}}

\newcommand{\cD}{\mathcal{D}}

\newcommand{\cF}{\mathcal{F}}

\newcommand{\cO}{\mathcal{O}}

\newcommand{\eqdef}{\coloneqq} 

\newcommand{\minimize}{\mathop{\mathrm{minimize}}}

\makeatletter
\newcommand{\vast}{\bBigg@{4}}

\DeclareMathOperator*{\argmin}{arg\,min}

\definecolor{myblue}{rgb}{0.3,0.25,0.2}
\definecolor{myorange}{rgb}{0.3,0.25,0.2}

\definecolor{alggray}{gray}{0.35} 
\newcommand{\algname}[1]{{\color{alggray}\small\sf#1}\xspace}

\newcommand{\ringmaster}{\algname{Ringmaster~ASGD}}

\newcommand{\rennalatitle}{Rennala~SGD}
\newcommand{\rennala}{\algname{Rennala~SGD}}
\newcommand{\sgd}{\algname{SGD}}

\newcommand{\mvr}{\algname{MVR}}
\newcommand{\storm}{\algname{STORM}}
\newcommand{\rennalamvrtitle}{{\red Rennala~MVR}}
\newcommand{\rennalamvr}{\algname{\red Rennala~MVR}}

\usepackage{thmtools}
\usepackage{thm-restate}
\usepackage{mdframed}

\theoremstyle{plain}
\newtheorem{theorem}{Theorem}[section]

\newtheorem{lemma}[theorem]{Lemma}

\theoremstyle{definition}
\newtheorem{definition}[theorem]{Definition}
\newtheorem{assumption}[theorem]{Assumption}
\theoremstyle{remark}

\theoremstyle{plain} 

\newmdenv[
  font=\normalfont\bfseries,
  linecolor=black,
  linewidth=0.8pt,
  topline=false,
  bottomline=false,
  leftline=false,
  rightline=false,
  backgroundcolor=gray!13,
  skipabove=2pt,
  skipbelow=2pt,
  innertopmargin=-5pt,
  innerbottommargin=4pt,
  innerrightmargin=4pt,
  innerleftmargin=4pt,
]{myboxed}

\newmdtheoremenv[
  font=\normalfont\bfseries,
  linecolor=black,
  linewidth=0.8pt,
  topline=false,
  bottomline=false,
  leftline=false,
  rightline=false,
  backgroundcolor=gray!13,
  skipabove=2pt,
  skipbelow=2pt,
  innertopmargin=-5pt,
  innerbottommargin=4pt,
  innerrightmargin=4pt,
  innerleftmargin=4pt,
]{boxedtheorem}[theorem]{Theorem}

\newmdtheoremenv[
  font=\normalfont\bfseries,
  linecolor=black,
  linewidth=0.8pt,
  topline=false,
  bottomline=false,
  leftline=false,
  rightline=false,
  backgroundcolor=gray!13,
  skipabove=2pt,
  skipbelow=2pt,
  innertopmargin=-5pt,
  innerbottommargin=4pt,
  innerrightmargin=4pt,
  innerleftmargin=4pt,
]{boxedlemma}[theorem]{Lemma}

\newmdtheoremenv[
  font=\normalfont\bfseries,
  linecolor=black,
  linewidth=0.8pt,
  topline=false,
  bottomline=false,
  leftline=false,
  rightline=false,
  backgroundcolor=gray!13,
  skipabove=2pt,
  skipbelow=2pt,
  innertopmargin=-5pt,
  innerbottommargin=4pt,
  innerrightmargin=4pt,
  innerleftmargin=4pt,
]{boxeddefinition}[theorem]{Definition}

\newmdtheoremenv[
  font=\normalfont\bfseries,
  linecolor=black,
  linewidth=0.8pt,
  topline=false,
  bottomline=false,
  leftline=false,
  rightline=false,
  backgroundcolor=gray!13,
  skipabove=2pt,
  skipbelow=2pt,
  innertopmargin=-5pt,
  innerbottommargin=4pt,
  innerrightmargin=4pt,
  innerleftmargin=4pt,
]{boxedassumption}[theorem]{Assumption}

\newtheorem{innercustomthm}{Theorem}
\newenvironment{restate-theorem}[1]
  {\renewcommand\theinnercustomthm{#1}\innercustomthm}
  {\endinnercustomthm}

\newtheorem{innercustomlemma}{Lemma}
\newenvironment{restate-lemma}[1]
  {\renewcommand\theinnercustomlemma{#1}\innercustomlemma}
  {\endinnercustomlemma}

\newtheorem{innercustomproposition}{Proposition}
\newenvironment{restate-proposition}[1]
  {\renewcommand\theinnercustomproposition{#1}\innercustomproposition}
  {\endinnercustomproposition}

\newenvironment{restate-boxedtheorem}[1]
  {\renewcommand\theinnercustomthm{#1}\begin{myboxed}\begin{innercustomthm}}
  {\end{innercustomthm}\end{myboxed}}

\newenvironment{restate-boxedlemma}[1]
  {\renewcommand\theinnercustomlemma{#1}\begin{myboxed}\begin{innercustomlemma}}
  {\end{innercustomlemma}\end{myboxed}}

\newcommand*{\sketchproofname}{Sketch of Proof}